\newtheorem{theorem}{Theorem}[section]
\newtheorem{definition}[theorem]{Definition}
\newtheorem{lemma}[theorem]{Lemma}
\newtheorem{fact}[theorem]{Fact}
\newtheorem{proposition}[theorem]{Proposition}
\newtheorem{corollary}[theorem]{Corollary}
\newtheorem{conjecture}[theorem]{Conjecture}
\newtheorem{claim}[theorem]{Claim}
\begin{document}
\textwidth 150mm \textheight 225mm
\title{Extremal problems and results related to Gallai-colorings
\thanks{Supported by the National Natural Science Foundation of China (No. 11871398)
and China Scholarship Council (No. 201906290174).}
}
\author{{Xihe Li$^{1,2}$, Hajo Broersma$^{2,}$\thanks{Corresponding author.}, Ligong Wang$^{1}$}\\
{\small $^{1}$ School of Mathematics and Statistics,}\\ {\small Northwestern Polytechnical University, Xi'an, Shaanxi 710129, PR China}\\
{\small $^{2}$ Faculty of Electrical Engineering, Mathematics and Computer Science,}\\ {\small University of Twente, P.O. Box 217, 7500 AE Enschede, The Netherlands}\\
{\small E-mail: lxhdhr@163.com; h.j.broersma@utwente.nl; lgwangmath@163.com}}
\date{}
\maketitle
\begin{center}
\begin{minipage}{120mm}
\vskip 0.3cm
\begin{center}
{\small {\bf Abstract}}
\end{center}
{\small A Gallai-coloring (Gallai-$k$-coloring) is an edge-coloring (with colors from $\{1, 2, \ldots, k\}$) of a complete graph without rainbow triangles. Given a graph $H$ and a positive integer $k$, the $k$-colored Gallai-Ramsey number $GR_k(H)$ is the minimum integer $n$ such that every Gallai-$k$-coloring of the complete graph $K_n$ contains a monochromatic copy of $H$. In this paper, we consider two extremal problems related to Gallai-$k$-colorings. First, we determine upper and lower bounds for the maximum number of edges that are not contained in any rainbow triangle or monochromatic triangle in a $k$-edge-coloring of $K_n$. Second, for $n\geq GR_k(K_3)$, we determine upper and lower bounds for the minimum number of monochromatic triangles in a Gallai-$k$-coloring of $K_{n}$, yielding the exact value for $k=3$. Furthermore, we determine the Gallai-Ramsey number $GR_k(K_4+e)$ for the graph on five vertices consisting of a $K_4$ with a pendant edge.

\vskip 0.1in \noindent {\bf Key Words}: \ Gallai-Ramsey theory, Regularity lemma, Rainbow triangle, Ramsey multiplicity, Monochromatic copy of a graph  \vskip
0.1in \noindent {\bf AMS Subject Classification (2010)}: \ 05C15, 05C35, 05C55, 05D10 }
\end{minipage}
\end{center}

\section{Introduction}
\label{sec:ch-introduction}

In this paper, we only consider edge-colorings of finite simple graphs. For an integer $k \geq 1$, let $c\colon\, E(G)\rightarrow [k]$ be a $k$-edge-coloring (not necessarily a proper edge-coloring) of a graph $G$, where $[k] := \{1, 2, \ldots, k\}$. A graph with an edge-coloring is called {\it rainbow} if all edges are colored differently, and {\it monochromatic} if all edges are colored the same. A {\it Gallai-$k$-coloring} is a $k$-edge-coloring of a complete graph without rainbow triangles, i.e. at most two distinct colors are assigned to the edges of every copy of $K_3$.

The term {\it Gallai-coloring} was first used by Gy\'{a}rf\'{a}s and Simonyi \cite{GySi} in honor of Gallai's decomposition lemma for rainbow triangle-free colorings \cite{Gallai}, but the study of Gallai-colorings has  arisen in a wide range of areas, such as poset theory \cite{Gallai}, the Erd\H{o}s-Hajnal conjecture \cite{FoGP}, rainbow Erd\H{o}s-Rothschild problem \cite{BaLi,BaBH}, information theory \cite{Ko'Si,KoST}, perfect graph theory \cite{CaEL}, and Ramsey-type problems \cite{GSSS,HMOT}.

Given a positive integer $k$ and graphs $H_1, H_2, \ldots, H_k$, the classical $k$-colored {\it Ramsey number} $R(H_1, H_2, \ldots, H_k)$ is the minimum integer $n$ such that every $k$-edge-coloring of $K_n$ contains a monochromatic copy of $H_i$ in color $i$ for some $i\in[k]$. It is well-known that determining the exact value of the Ramsey number is an extremely difficult problem, even for relatively small graphs. Many variants of Ramsey numbers concerning rainbow structures have been studied, such as rainbow-Ramsey numbers, anti-Ramsey numbers and Gallai-Ramsey numbers. We refer to two surveys \cite{FuMO1,Rad} for more information on these topics.

Given $k$ graphs $H_1, H_2, \ldots, H_k$, the $k$-colored {\it Gallai-Ramsey number} $GR(H_1, H_2, \ldots, H_k)$ is defined to be the minimum integer $n$ such that every Gallai-$k$-coloring of the complete graph on $n$ vertices contains a monochromatic copy of $H_i$ in color $i$ for some $i\in[k]$. In the special case when $H_1=H_2=\cdots=H_k=H$, we simply write $R_k(H)$ and $GR_k(H)$ for $R(H, H, \ldots, H)$ and $GR(H, H, \ldots, H)$, respectively. Gallai-Ramsey theory has been increasingly popular over the past decade. We refer to papers \cite{BrSo,FuMa2,GySi,HMOT,LiWL,LMSSS,ZhSC2} for more information on some related problems.

A natural problem related to Gallai-Ramsey theory is to determine the maximum number of edges that are not contained in any rainbow copy of $K_3$ or monochromatic copy of $H$. The analogous problem for Ramsey numbers was considered in \cite{KeSu,LiPS,Ma}; in these papers the authors studied the maximum number of edges not contained in any monochromatic copy of $H$ over all $k$-edge-colorings of $K_n$. For $k\geq 2$, let $f_{k}(n, H)$ denote the maximum number of edges not contained in any rainbow triangle or monochromatic copy of $H$, over all $k$-edge-colorings of $K_n$. The first part of this paper is devoted to this problem.

Let $ex(n, H)$ be the maximum number of edges of an $H$-free graph of order $n$, i.e., the Tur\'{a}n number of $H$. By Tur\'{a}n's theorem, the unique $K_{r+1}$-free graph on $n$ vertices with $ex(n, K_{r+1})$ edges is the Tur\'{a}n graph $T_r(n)$, i.e., the complete $r$-partite graph
on $n$ vertices with class sizes as equal as possible. Let $t(n,r)$ be the number of edges of $T_r(n)$. Note that we have the trivial upper bound $f_{k}(n, H)\leq t(n, GR_{k}(H)-1)$. We also have a trivial lower bound $f_k(n, H)\geq f_2(n, H)\geq ex(n, H)$. For the case $H=K_3$, we will prove the following theorem.

\noindent\begin{theorem}\label{th:RMk3k3} For any real number $\delta>0$, there exists an $n_0$ such that for all $n\geq n_0$, we have $t(n, GR_{k-1}(K_3)$ $- 1)\leq f_{k}(n, K_3)<t(n, GR_{k-1}(K_3)-1)+\delta n^2$.
\end{theorem}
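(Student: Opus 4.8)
The plan is to prove the two inequalities separately: the lower bound comes from an explicit blow-up construction and the upper bound from the regularity method. For the lower bound, set $r=GR_{k-1}(K_3)-1$. By definition of $GR_{k-1}(K_3)$ there is a Gallai-$(k-1)$-coloring $\phi$ of $K_r$ with no monochromatic triangle. I partition $V(K_n)$ into classes $V_1,\dots,V_r$ of sizes as equal as possible, color every edge between $V_a$ and $V_b$ by $\phi(ab)\in[k-1]$, and color every edge inside a class by the new color $k$. The crossing edges, i.e. the edges of $T_r(n)$, then avoid both rainbow and monochromatic triangles: a triangle with vertices in three distinct classes inherits the colors of a triangle of $\phi$, which is neither rainbow nor monochromatic, while a triangle with two vertices in one class uses color $k$ on one edge and a single $\phi$-color on the other two. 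Since $|E(T_r(n))|=t(n,r)$, this already gives $f_k(n,K_3)\ge t(n,GR_{k-1}(K_3)-1)$ for every $n$.

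For the upper bound, fix a $k$-edge-coloring $c$ of $K_n$ and let $G$ be the spanning subgraph of good edges (those lying in no rainbow and no monochromatic triangle). I would apply the multicolour regularity lemma to the $k$ color classes together with $G$, obtaining clusters $V_1,\dots,V_m$ that are $\epsilon$-regular in every color and in $G$. Form the reduced graph $R$ on $[m]$ by joining $i,j$ whenever $(V_i,V_j)$ is regular and the $G$-density between them is at least $\eta$, and give such an edge a color $g_{ij}$ that is dense among the good edges of the pair. The exceptional set, the irregular pairs and the low-density pairs each contribute $O((\epsilon+\eta+1/m)n^2)$ edges, so with $\epsilon,\eta\ll\delta$ and $m$ large one obtains $e(G)\le (n/m)^2e(R)+\tfrac{\delta}{2}n^2$. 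The counting lemma shows that $(R,g)$ has neither a rainbow nor a monochromatic triangle, since such a triangle would embed, across three regular pairs, a transversal triangle of good edges using three distinct colors (respectively one color), contradicting goodness; hence $g$ is a genuine Gallai-coloring of $R$ with no monochromatic triangle.

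It therefore suffices to show that $R$ contains no clique of order $GR_{k-1}(K_3)$: then Tur\'an's theorem gives $e(R)\le ex(m,K_{GR_{k-1}(K_3)})=t(m,GR_{k-1}(K_3)-1)$, and since $t(m,r)/\binom{m}{2}\to 1-1/r$ the above inequality upgrades to $e(G)<t(n,GR_{k-1}(K_3)-1)+\delta n^2$, as required (one could equivalently argue through Erd\H{o}s--Stone--Simonovits supersaturation). The whole difficulty is concentrated in this clique bound, and it is here that the \emph{rainbow-triangle-free} hypothesis must be used to save one color: an abstract Gallai-coloring of $K_{GR_{k-1}(K_3)}$ with no monochromatic triangle and $k$ colors does exist, so the Gallai and no-monochromatic-triangle properties of $g$ by themselves only forbid cliques of order $GR_k(K_3)$.

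To break this I would exploit that goodness forbids monochromatic triangles with two vertices in a single cluster. Concretely, if color $l$ is dense among the good edges between $W_i$ and $W_j$, then for every $z\in W_j$ its good-$l$-neighborhood inside $W_i$ spans no edge of color $l$, since such an edge would complete a monochromatic $l$-triangle on two good edges; thus every cluster meeting an $l$-colored reduced edge contains a large $l$-independent set. The plan is to combine these cluster-internal restrictions with the Gallai partition of $(R,g)$ and an induction mirroring the evaluation of $GR_{k-1}(K_3)$, showing that one color can be eliminated from any putative $GR_{k-1}(K_3)$-clique. I expect this color-saving step to be the \textbf{main obstacle}: a naive pigeonhole over cluster-internal majority colors only yields a clique bound of order $k\bigl(GR_{k-1}(K_3)-1\bigr)$, and reaching the exact threshold $GR_{k-1}(K_3)$ requires the finer structural analysis above, carried out so that all accumulated error terms stay within the allotted $\delta n^2$.
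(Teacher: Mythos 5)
Your lower-bound construction is correct and coincides with the paper's (Lemma \ref{le:gr*bound}(2) combined with Lemma \ref{le:gr*}), and your reduction of the upper bound via the regularity method to a Gallai-colored, monochromatic-triangle-free reduced graph is also the paper's first step. But there is a genuine gap, and it sits exactly where you flagged it: the color-saving clique bound is never proved. As you observe yourself, the Gallai property plus triangle-freeness of the reduced coloring in $k$ colors only forbids cliques of order $GR_k(K_3)$, so your argument as written yields only $f_k(n,K_3)<t(n,GR_k(K_3)-1)+\delta n^2$. Your proposed repair --- that each cluster meeting an $l$-colored reduced edge contains a large set spanning no edges of color $l$ --- is a true consequence of goodness, but it does not by itself attach a single forbidden color to each cluster, and the ``induction mirroring the evaluation of $GR_{k-1}(K_3)$'' that you defer is precisely the nontrivial content of the theorem; moreover, running such an induction on the actual vertex sets would require regularity to be inherited along the recursion, which is exactly the error-accumulation problem you worry about at the end.

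For comparison, the paper resolves this by abstracting the cluster-internal information into a \emph{vertex color} and proving an exact combinatorial lemma, cleanly separated from all regularity error terms. Definition \ref{de:gr*} introduces $GR^{\ast}_k(K_3)$ via colorings $c\colon\binom{[n]}{\leq 2}\rightarrow[k]$ whose edge part is a Gallai coloring without a monochromatic (homomorphic copy of) $K_3$ and in which no edge shares its color with either endpoint, and Lemma \ref{le:gr*} proves $GR^{\ast}_k(K_3)=GR_{k-1}(K_3)$ by induction on $k$ through Gallai partitions --- this is the one-color saving in exact form, done once and for all at the abstract level. In the regularity argument, after locating a clique $R'$ of order $N_k=GR_{k-1}(K_3)$ in the reduced graph of the good subgraph $F'$, the paper applies the regularity lemma a \emph{second} time inside each cluster, to the full coloring $F[V_i]$ rather than to the good edges, extracting an $(\varepsilon_1,d/2)$-regular pair $(V_{i,1},V_{i,2})$ in some color $c_i$, which becomes the color of the singleton $\{i\}$; since $|V(R')|=GR^{\ast}_k(K_3)$, either $R'$ contains a rainbow or monochromatic triangle (contradiction via Lemma \ref{le:embedding}) or some reduced edge at $i$ carries color $c_i$, in which case the Slicing Lemma \ref{le:slicing} makes $(V_j,V_{i,1})$, $(V_j,V_{i,2})$ and $(V_{i,1},V_{i,2})$ three regular pairs in color $c_i$ and embedding yields a monochromatic triangle containing two good edges --- the concrete contradiction your sketch lacks. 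Note also two points your plan glosses over: your regular partition gives no control inside clusters (hence the second regularization of $F[V_i]$), and the information needed faces the opposite way from your independent sets --- one wants a \emph{dense} internal monochromatic structure whose color is then forbidden on incident reduced edges (condition (2$^{\ast}$)), not mere sparseness of incident colors inside clusters. Until an analogue of Lemma \ref{le:gr*} is actually proved, the threshold $GR_{k-1}(K_3)$ is not reached.
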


We conjecture that the lower bound on $f_{k}(n, K_3)$ in Theorem~\ref{th:RMk3k3} is in fact the exact value of $f_{k}(n, K_3)$. Moreover, we can generalize this result to a general graph $H$ (see Theorem \ref{th:RMk3H}).

The second part of this paper is devoted to the Gallai-Ramsey multiplicity problem. By the definition of the Gallai-Ramsey number, if $n\geq GR_k(H)$, then any Gallai-$k$-coloring of $K_{n}$ contains a monochromatic copy of $H$. In fact, there could be more than one monochromatic copy of $H$. In light of this, it is natural to consider the minimum number of monochromatic copies of $H$ (as an unlabeled graph) in a Gallai-$k$-coloring of $K_{n}$. Let $g_{k}(H,n)$ denote the minimum number of monochromatic copies of $H$ taken over all Gallai-$k$-colorings of $K_{n}$. The analogous problem for Ramsey numbers is known as the Ramsey multiplicity problem, that is, to consider the minimum number  $M_k(H,n)$ of monochromatic copies of $H$ taken over all $k$-edge-colorings of $K_{n}$ (see \cite{Con,CKPSTY,Fox,HHKNR} for some recent results). With the additional restriction imposed on Gallai-colorings, it is obvious that $g_{k}(H,n)\geq M_k(H,n)$. In 1959, Goodman \cite{Goo} proved the following classical result concerning $M_2(K_3,n)$.

\noindent\begin{theorem}\label{th:M2} {\normalfont (\cite{Goo})} For any positive integer $n$, we have
\[M_2(K_3,n)= \begin{cases}
                n(n-2)(n-4)/24, & \mbox{if $n$ is even},\\
                n(n-1)(n-5)/24, & \mbox{if $n\equiv 1\bmod{4}$},\\
                (n+1)(n-3)(n-4)/24, & \mbox{if $n\equiv 3\bmod{4}$}.
              \end{cases}\]
\end{theorem}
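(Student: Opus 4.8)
The plan is to follow the classical double-counting argument. Fix a $2$-edge-coloring of $K_n$ with color classes ``red'' and ``blue'', and for each vertex $v$ write $r_v$ and $b_v=n-1-r_v$ for its red- and blue-degrees. Call a triangle \emph{mixed} if it uses both colors; every mixed triangle has two edges of one color and one edge of the other. First I would count \emph{bichromatic angles}, that is, unordered pairs of differently-colored edges sharing an endpoint; the number of these at $v$ is exactly $r_vb_v$, so there are $\sum_v r_vb_v$ of them in total. A mixed triangle contributes precisely two such angles, one at each endpoint of its unique edge whose color appears only once. Since $K_n$ has $\binom{n}{3}$ triangles, the number of monochromatic triangles equals
\[
\binom{n}{3}-\frac{1}{2}\sum_{v}r_vb_v .
\]
Minimizing monochromatic triangles is therefore equivalent to maximizing $\sum_v r_vb_v$.

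Next I would bound this sum. Since $r_v+b_v=n-1$, pointwise the integer product satisfies $r_vb_v\le\lfloor (n-1)^2/4\rfloor$, with equality when $r_v$ is as close to $(n-1)/2$ as possible: for $n$ even this gives $r_vb_v\le n(n-2)/4$, and for $n$ odd it gives $r_vb_v\le (n-1)^2/4$. Summing these pointwise bounds and substituting into the displayed identity yields, after a short computation, exactly $n(n-2)(n-4)/24$ when $n$ is even and $n(n-1)(n-5)/24$ when $n$ is odd. The subtle point is a parity constraint in the odd case: attaining $r_vb_v=(n-1)^2/4$ at \emph{every} vertex forces every $r_v=(n-1)/2$, whence $\sum_v r_v=n(n-1)/2$ must be even, being twice the number of red edges. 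For $n\equiv 1\bmod 4$ this holds, but for $n\equiv 3\bmod 4$ both $n$ and $(n-1)/2$ are odd, so $\sum_v r_v$ would be odd, a contradiction. Hence at least one $r_v\ne (n-1)/2$, and since the neighbouring integer values drop $r_vb_v$ by exactly $1$, the maximum of $\sum_v r_vb_v$ is at most $n(n-1)^2/4-1$ in this case; plugging this in gives $(n+1)(n-3)(n-4)/24$.

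To finish I would exhibit colorings attaining these maxima, i.e.\ red graphs with the prescribed near-regular degree sequences: a $\tfrac{n-1}{2}$-regular graph for $n\equiv1\bmod 4$, a graph with all degrees in $\{n/2-1,\,n/2\}$ for $n$ even, and, for $n\equiv 3\bmod 4$, all degrees equal to $(n-1)/2$ except a single vertex of degree $(n-1)/2\pm1$. In each case the degree sum is even and the sequence is graphical (by an explicit circulant-type construction, or via the Erd\H{o}s--Gallai conditions), and by the identity above every such coloring realizes exactly the claimed count, matching the lower bound. I expect the main obstacle to be the $n\equiv 3\bmod 4$ case, where one must argue both directions sharply: that the parity obstruction forces a loss of at least $1$ in $\sum_v r_vb_v$, and that a loss of exactly $1$ is attainable by a concrete near-regular graph. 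The other cases are routine once the counting identity is established.
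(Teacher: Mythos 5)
This statement is quoted by the paper from Goodman's 1959 article without proof, so there is no in-paper argument to compare against; your proposal is precisely the classical proof of Goodman's theorem and it is correct. The counting identity $\binom{n}{3}-\frac{1}{2}\sum_v r_vb_v$ is exact (each mixed triangle carries exactly two bichromatic angles), the parity obstruction for $n\equiv 3\bmod 4$ correctly forces $\sum_v r_vb_v\le n\bigl(\tfrac{n-1}{2}\bigr)^2-1$, which plugs in to give exactly $(n+1)(n-3)(n-4)/24$, and the extremal near-regular degree sequences you describe are indeed graphical (even degree sum, Erd\H{o}s--Gallai), with equality then automatic since the triangle count depends only on the degrees.
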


For the case of $3$-edge-colorings, Cummings, Kr\'{a}l', Pfender, Sperfeld, Treglown and Young \cite{CKPSTY} proved the following result, using flag algebras and a probabilistic argument.

\noindent\begin{theorem}\label{th:M3} {\normalfont (\cite{CKPSTY})}
There exists an integer $n_0$ such that for $n\ge n_0$, if we write $n=5m+r$ for nonnegative integers $m$ and $r$ with $0\leq r\leq 4$, then $$M_3(K_3, n)=r\binom{m+1}{3}+(5-r)\binom{m}{3}.$$
\end{theorem}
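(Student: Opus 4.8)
The plan is to prove matching upper and lower bounds, with the upper bound coming from an explicit family of colorings and the lower bound being the genuinely difficult half. For the upper bound I would partition $V(K_n)$ into five parts $V_1,\dots,V_5$ with sizes as equal as possible, so that $r$ parts have size $m+1$ and $5-r$ parts have size $m$. Between the parts I use the classical pentagon coloring of $K_5$: color the edges of a spanning $5$-cycle on the parts with color $1$ and the remaining (``pentagram'') edges with color $2$. Since this two-coloring of $K_5$ has no monochromatic triangle, no triangle meeting three distinct parts is monochromatic. Finally color every edge \emph{inside} a part with color $3$. Because color $3$ never occurs between parts, any monochromatic triangle in color $3$ must lie inside a single part; and any monochromatic triangle in color $1$ or $2$ would need all three of its edges between parts, hence would meet three distinct parts, which is impossible. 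Thus the monochromatic triangles are exactly the $\binom{|V_i|}{3}$ triangles internal to the parts, giving $\sum_i\binom{|V_i|}{3}=r\binom{m+1}{3}+(5-r)\binom{m}{3}$ and hence the upper bound $M_3(K_3,n)\le r\binom{m+1}{3}+(5-r)\binom{m}{3}$. (The balanced split minimizes $\sum_i\binom{|V_i|}{3}$ by convexity.)

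For the lower bound I would first settle the asymptotic density. Note that the construction above realizes a monochromatic-triangle density of $\tfrac{1}{150}n^3\big/\binom{n}{3}\to \tfrac{1}{25}$, the two-color analogue of the density $\tfrac14$ implicit in Goodman's Theorem~\ref{th:M2}. I would prove that $\tfrac{1}{25}$ is optimal using the flag algebra method: set up the semidefinite program whose variables are densities of the relevant $3$-colored flags on a small fixed number of vertices, impose the averaging (chain) identities relating them, and search for a certificate—a positive semidefinite matrix together with nonnegative multipliers—that certifies the bound $\ge \tfrac{1}{25}$ for every $3$-coloring. This yields $M_3(K_3,n)\ge \big(\tfrac{1}{25}-o(1)\big)\binom{n}{3}$, which matches the construction only to within a lower-order error.

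The remaining and harder task is to upgrade this asymptotic bound to the exact integer count, which I would do via stability plus cleaning. From the optimal flag-algebra certificate one reads off which flags are \emph{tight} (forced to vanish in density for near-extremal colorings); combined with a colored-graphon / removal-lemma passage to the limit, this forces any coloring of density within $o(1)$ of $\tfrac{1}{25}$ to be $o(n^2)$-close, after recoloring, to a blow-up of the pentagon construction. I would then run a vertex-by-vertex cleaning argument: assign each vertex to the part whose colored neighborhood profile it best fits, show that correcting the $o(n^2)$ ``misfit'' edges does not increase the triangle count, and thereby reduce to an exact five-part structure. A direct optimization of $\sum_i\binom{|V_i|}{3}$ over integer part sizes summing to $n$ then produces the piecewise formula. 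I expect the main obstacle to be exactly this last passage: the SDP controls densities only up to $o(1)$, so one needs a robust stability theorem strong enough to rule out colorings that differ from the pentagon blow-up yet might conceivably save a lower-order number of triangles, and the cleaning/convexity comparison only dominates these error terms once $n\ge n_0$, which is precisely why the theorem is stated for large $n$.
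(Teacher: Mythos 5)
A point of order first: the paper does not prove Theorem~\ref{th:M3} at all. It is quoted from \cite{CKPSTY} and used as a black box (for instance, $M_2$ and $M_3$ values feed the upper-bound construction for Theorem~\ref{th:multiplicity1}), with the source's method summarized only as ``flag algebras and a probabilistic argument.'' So there is no internal proof to compare against; the fair comparison is with the cited source, and your outline follows essentially that route. Your upper-bound half is correct and complete: the balanced blow-up of the $2$-coloured pentagon with parts internally monochromatic in colour $3$ has exactly the $\sum_i\binom{|V_i|}{3}=r\binom{m+1}{3}+(5-r)\binom{m}{3}$ internal triangles as its monochromatic triangles, by the case analysis you give (three parts: pentagon coloring is triangle-free monochromatically; two parts: mixed colours; one part: colour $3$).

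The lower bound, however, is a plan rather than a proof, and the gap sits exactly where the difficulty of \cite{CKPSTY} lives. Two things are asserted but not established. First, no SDP certificate is exhibited; ``set up the semidefinite program and search for a certificate'' does not show that the flag-algebra relaxation is tight at $\tfrac{1}{25}$ — relaxations of this kind need not attain the true extremal density, and that it does here is a nontrivial computational fact, not a routine step. Second, the upgrade from $\bigl(\tfrac{1}{25}-o(1)\bigr)\binom{n}{3}$ to the exact integer value is itself a substantial argument: you must prove a stability theorem (every near-extremal colouring is, after bounded recolouring, close to a pentagon blow-up) \emph{and} a cleaning inequality showing that every imperfection costs at least as many monochromatic triangles as it could save, at the fine resolution where the difference between $\binom{m+1}{3}$ and $\binom{m}{3}$ matters. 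Your sentence that correcting the $o(n^2)$ misfit edges ``does not increase the triangle count'' asserts precisely the inequality that has to be proved, and the direction of the argument should be that deviations strictly cost triangles, which requires a local vertex-by-vertex optimization that you have not carried out. In short: the proposal faithfully reconstructs the known strategy of the cited paper in outline, correctly proves the upper bound, but leaves the entire lower bound unproven.
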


Our next result shows that $g_{3}(K_3,n)=M_3(K_3, n)$ if $n$ sufficiently large, and gives upper and lower bounds for $g_{k}(K_3,n)$ for other values of $k$.

\noindent\begin{theorem}\label{th:multiplicity1} For $n\geq GR_k(K_3)$, we write $n=5^{\lfloor(k-1)/2\rfloor}m+r$, where $m$ and $r$ are nonnegative integers with $0\leq r\leq 5^{\lfloor(k-1)/2\rfloor}-1$. Then
$$g_{k}(K_3,n)\leq \left\{
                \begin{aligned}
                & r\binom{m+1}{3}+\left(5^{(k-1)/2}-r\right)\binom{m}{3}, & & \mbox{if $k$ is odd},\\
                & rM_2(K_3, m+1)+\left(5^{(k-2)/2}-r\right)M_2(K_3, m), & & \mbox{if $k$ is even}.
              \end{aligned}
           \right.$$
Moreover, let $s_0=1$ if $k$ is odd, and $s_0=2$ if $k$ is even. Then
$$g_{k}(K_3,n)\geq \frac{s_0n(n-1)(n-2)}{GR_k(K_3)(GR_k(K_3)-1)(GR_k(K_3)-2)}.$$
\end{theorem}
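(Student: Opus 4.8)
The plan is to establish the upper and lower bounds separately, since they rely on fundamentally different ideas. For the upper bound, I would construct explicit Gallai-$k$-colorings that achieve few monochromatic triangles by iterating the known optimal constructions for small numbers of colors. The key structural tool is the substitution (blow-up) operation that is native to Gallai-colorings: given a Gallai-coloring of a small complete graph and a Gallai-coloring inside each ``super-vertex,'' one obtains a new Gallai-coloring on the product, and crucially this introduces \emph{no} rainbow triangle. The target numbers $5^{\lfloor (k-1)/2\rfloor}$ and the binomial expressions strongly suggest pairing up the colors two at a time: each pair of colors is used to realize an optimal $2$-coloring of $K_5$ (the extremal configuration behind $R_2(K_3)=6$, which is triangle-free in each color on $5$ vertices), and the $\lfloor (k-1)/2\rfloor$ such layers are nested by substitution.

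**Concretely, for the upper bound** I would proceed by induction on $k$. When $k$ is odd, I would take the optimal $3$-coloring construction underlying Theorem~\ref{th:M3} at the innermost level and then wrap $\lfloor(k-1)/2\rfloor = (k-3)/2$ further layers of the rainbow-triangle-free $2$-coloring of $K_5$ around it; when $k$ is even, the innermost level is instead a $2$-coloring (explaining the $M_2(K_3,\cdot)$ factors). I would split $n$ as $5^{\lfloor(k-1)/2\rfloor}m+r$, partition the vertex set into $5^{\lfloor(k-1)/2\rfloor}$ parts (with $r$ parts of size $m+1$ and the rest of size $m$), color edges between parts according to the iterated $K_5$-coloring, and color edges inside each part recursively. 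The monochromatic triangles are then counted additively: a triangle is monochromatic only if all its vertices lie in one innermost block (giving the $\binom{m}{3}$ and $\binom{m+1}{3}$, resp.\ $M_2(K_3,\cdot)$, terms) because the between-block colorings are triangle-free in each color by construction. Verifying that the iterated construction genuinely has no rainbow triangle and contributes no cross-block monochromatic triangle is the crux of the upper bound, but this follows from the substitution lemma together with the fact that the base $K_5$ pattern uses each color on a $5$-cycle.

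**For the lower bound** the idea is a straightforward averaging (counting) argument exploiting the definition of $GR_k(K_3)$. Let $N:=GR_k(K_3)$. In any Gallai-$k$-coloring of $K_n$, every set of $N$ vertices induces a Gallai-$k$-coloring of $K_N$ and hence contains at least one monochromatic triangle. I would count pairs (monochromatic triangle, $N$-subset containing it): each monochromatic triangle lies in exactly $\binom{n-3}{N-3}$ such subsets, and there are $\binom{n}{N}$ subsets each contributing at least one monochromatic triangle, giving
\[
g_k(K_3,n)\ \geq\ \frac{\binom{n}{N}}{\binom{n-3}{N-3}}\ =\ \frac{n(n-1)(n-2)}{N(N-1)(N-2)}.
\]
To obtain the stated factor $s_0$, I would sharpen the ``at least one'' count: when $k$ is even one can argue that every copy of $K_N$ in fact contains at least $s_0=2$ monochromatic triangles (intuitively because an even number of colors forces a slightly denser monochromatic structure near the threshold), replacing the $1$ by $s_0$ throughout the double count. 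The main obstacle is precisely this refinement—proving that the minimum number of monochromatic triangles in a Gallai-$k$-coloring of $K_N$ equals $s_0$ rather than merely $1$—which requires a short case analysis of the extremal colorings on $N$ vertices rather than the mere definition of $GR_k(K_3)$.
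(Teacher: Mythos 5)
Your overall strategy coincides with the paper's: the upper bound is proved there by exactly the nested blow-up construction you describe (iterated substitutions of the $2$-colored $K_5$ whose color classes are $C_5$'s, with innermost cliques that are monochromatic in color $k$ when $k$ is odd, or optimally $2$-colored in colors $k-1,k$ when $k$ is even), and your additive triangle count is valid because the inner colors are disjoint from the cross colors, so every monochromatic triangle lies inside an innermost block. Likewise, your double count for the lower bound is precisely the paper's: each monochromatic triangle lies in $\binom{n-3}{N-3}$ of the $\binom{n}{N}$ sets of size $N=GR_k(K_3)$, each of which carries at least $s_0$ monochromatic triangles, yielding the stated bound; for odd $k$ the value $s_0=1$ is indeed immediate from the definition of $GR_k(K_3)$.

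The genuine gap is the even-$k$ refinement $g_k(K_3, GR_k(K_3))\geq 2$, which you correctly identify as the crux but for which your proposed route would fail. A ``short case analysis of the extremal colorings on $N$ vertices'' is not available: $N=5^{k/2}+1$ grows with $k$, so there is no finite family of colorings to inspect, and the heuristic that an even number of colors ``forces a slightly denser monochromatic structure'' is not an argument. The paper proves this claim as a standalone theorem (Theorem \ref{th:multiplicity2}) by induction on even $k$, and the inductive mechanism is the essential missing idea in your plan. The base case is $M_2(K_3,6)=2$ (from Theorem \ref{th:M2}). For the step, one assumes a Gallai-$k$-coloring of $K_{GR_k(K_3)}$ with a single monochromatic triangle, takes a Gallai partition with a minimum number $t$ of parts (Theorem \ref{th:Gallai}), and notes $t\leq 5$, since $t\geq 6$ would give a $2$-colored $K_6$ between parts and hence two monochromatic triangles. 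The cases $t\in\{2,3\}$, $t=4$ and $t=5$ are then killed separately: for $t=2$ one shows the larger side would contain two disjoint monochromatic triangles because $|V_1|>2\cdot 5^{(k-2)/2}+2\geq GR_{k-1}(K_3)+1$; for $t=4$ and $t=5$ one shows at most one edge (respectively no edge) of the two inter-part colors can appear inside the parts, whence the induction hypothesis bounds the total order by $4(GR_{k-2}(K_3)-1)+1$ or $5(GR_{k-2}(K_3)-1)$, both strictly less than $GR_k(K_3)=5^{k/2}+1$, a contradiction. Without this inductive Gallai-partition argument (or a substitute for it), your lower bound only delivers the statement with $s_0=1$ in the even case.
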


In general, we conjecture that the above upper bound on $g_{k}(K_3,n)$ in Theorem~\ref{th:multiplicity1} is in fact the exact value of $g_{k}(K_3,n)$, but we can only verify this for the following cases: (1) $k=3$ and $n$ sufficiently large, (2) $k\geq 3$ and $n=GR_k(K_3)$, (3) $k$ is odd and $GR_k(K_3)\leq n\leq GR_k(K_3)+5^{(k-1)/2}-1$.

Finally, we consider the original problem, the Gallai-Ramsey number for a graph $H$. In \cite{GSSS}, Gy\'{a}rf\'{a}s, S\'{a}rk\"{o}zy, Seb\H{o} and Selkow provided the following general statement on the value of the Gallai-Ramsey number $GR_k(H)$.

\noindent\begin{theorem}\label{th:general_bounds} {\normalfont (\cite{GSSS})} For any graph $H$ and positive integer $k$, if $H$ is not bipartite, then $GR_k(H)$ is exponential in $k$, and if $H$ is bipartite but not a star, then $GR_k(H)$ is linear in $k$.
\end{theorem}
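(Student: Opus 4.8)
The plan is to prove both parts of the dichotomy with the same engine, namely \emph{Gallai's structure theorem}: every Gallai-$k$-coloring of a complete graph $K_n$ admits a partition $V(K_n)=V_1\cup\cdots\cup V_t$ with $t\ge 2$ such that between any two parts all edges receive the same color and the \emph{reduced coloring} on the ``blob graph'' $K_t$ uses at most two colors. Two consequences drive everything. First, a monochromatic copy of a graph $F$ in the reduced $2$-coloring of $K_t$ lifts (by choosing one vertex per blob) to a monochromatic $F$ in $K_n$; hence if $t\ge R_2(F)=R(F,F)$ we already find a monochromatic $F$. Second, if the reduced edge between $V_i$ and $V_j$ has color $c$, then \emph{all} $|V_i|\cdot|V_j|$ edges between these blobs have color $c$, so two blobs of size $\ge s$ joined by a reduced edge yield a monochromatic $K_{s,s}$. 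For the upper bounds I would run an induction on $k$ using these facts, and for the lower bounds I would exhibit explicit blow-up constructions.

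For the non-bipartite case I would prove matching exponential bounds. For the lower bound, start from a fixed Gallai-$2$-coloring of $K_4$ in which each color class is bipartite (e.g.\ a $4$-cycle in one color and the remaining perfect matching in the other), and blow it up recursively: to build a coloring with colors $1,\dots,k$, use this $K_4$-pattern in the two new colors $k-1,k$ and replace each of its four vertices by a copy of the recursively constructed $(k-2)$-colored graph. One checks that the construction stays rainbow-triangle-free and that every color class remains bipartite; since $H$ is not bipartite, no color class can contain $H$, so $GR_k(H)>4^{\lfloor k/2\rfloor}$, which is exponential. For the upper bound I would use $H\subseteq K_h$ with $h=|V(H)|$, so $GR_k(H)\le GR_k(K_h)$, and bound $GR_k(K_h)$ by induction via the reduced $2$-coloring and $R_2(K_h)$: if the blob graph has at least $R_2(K_h)$ parts we are done by the lifting consequence, and otherwise the number of parts is bounded and we recurse into a large blob.

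For the bipartite non-star case I would first reduce the target: a bipartite $H$ satisfies $H\subseteq K_{s,s}$ for $s=|V(H)|$, and $K_{s,s}$ is itself a non-star bipartite graph, so $GR_k(H)\le GR_k(K_{s,s})$; it therefore suffices to bound $GR_k(K_{s,s})$ linearly. For the lower bound, note that since $H$ is bipartite and not a star it contains two independent edges $2K_2$, hence is not contained in any single star. Coloring $K_{k+1}$ by giving edge $\{i,j\}$ with $i<j$ the color $i$ makes every color class a single star and yields a Gallai-coloring (every triangle sees only two colors); no color class contains $2K_2$, so there is no monochromatic $H$, giving $GR_k(H)\ge k+2$. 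For the upper bound I would use the second consequence above: to avoid a monochromatic $K_{s,s}$, at most one blob may have size $\ge s$, and by inspecting the reduced degrees of that large blob in the two reduced colors the total number of blobs must be bounded by a constant depending only on $s$; hence all but $O_s(1)$ vertices lie in a single blob, into which we recurse.

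The main obstacle in both upper bounds is the same: a naive recursion ``pass to the largest blob'' does not decrease the number of colors, so it yields only a \emph{multiplicative} recursion and hence, at best, an exponential bound---fatal for the linear claim and lossy for the exponential one. The heart of the argument is therefore the color-reduction step: one must set up the induction so that passing to the relevant sub-blob eliminates at least one color (for the linear bound, turning the recursion into the additive form $GR_k(K_{s,s})\le GR_{k-1}(K_{s,s})+O_s(1)$), using that the two reduced colors are essentially ``used up'' at the top level. Making this accounting precise---choosing the Gallai partition (e.g.\ a coarsest one) so that a reduced color cannot recur inside the large blob without already creating the desired monochromatic copy---is where the real work lies.
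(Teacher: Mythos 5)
First, note that the paper does not prove this statement at all: Theorem~\ref{th:general_bounds} is quoted from \cite{GSSS} as background, so there is no in-paper proof to compare against; your proposal must be judged on its own. Your two lower bounds are essentially correct and are the standard constructions. The recursive blow-up of the $2$-colored $K_4$ (a $C_4$ in one color, the complementary perfect matching in the other) stays rainbow-triangle-free because any triangle meets at most two blobs or projects to a triangle of the $2$-colored base, and every color class stays bipartite under blow-up, giving $GR_k(H)>4^{\lfloor k/2\rfloor}$ for non-bipartite $H$. The coloring of $K_{k+1}$ in which edge $\{i,j\}$, $i<j$, gets color $i$ is Gallai and makes every color class a star, giving the linear lower bound --- with the small caveat that ``$H$ bipartite and not a star implies $H\supseteq 2K_2$'' needs $H$ to have no isolated vertices (a star plus an isolated vertex is not a star but embeds in a color class here); under the natural reading of the theorem this is harmless but should be said.

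The genuine gap is that neither upper bound is actually proved: you explicitly defer ``the color-reduction step,'' and that step is the entire content of the theorem, since the lower bounds are easy. Worse, the one concrete mechanism you propose for it is false: choosing a coarsest (or minimal) Gallai partition does \emph{not} prevent the two reduced colors from recurring inside the large part, and in general they do recur --- e.g.\ for $H=K_4$ the extremal colorings reuse top-level colors inside parts with a smaller clique budget. The correct mechanism is not ``colors disappear'' but ``targets shrink'': if some part $Q$ is joined to the large part $P$ in reduced color $1$, then any color-$1$ copy of $K_{h-1}$ (resp.\ $K_{s-1,s}$) inside $P$ combines with a vertex of $Q$ to a color-$1$ copy of $K_h$ (resp.\ $K_{s,s}$), so one must recurse into $P$ with a strictly smaller target in at least one of the two reduced colors. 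Setting up the induction on the vector of per-color targets $(h_1,\dots,h_k)$, each Gallai-partition step multiplies by at most $R(K_{h_1},K_{h_2})-1$ parts while decreasing $\sum_i(h_i-1)$, which yields $GR_k(K_h)\leq (R_2(K_h)-1)^{k(h-1)}$ --- exponential, as required --- and for $K_{s,s}$ the same shrinking argument, combined with your (correct) observations that at most one part can have size $\geq s$ and that a reduced monochromatic $K_{s,s}$ lifts by picking one vertex per part, gives an \emph{additive} loss of $O_s(1)$ per shrinking step with at most $O(sk)$ steps in total, hence linearity. Without this bookkeeping your argument proves only the lower bounds, and with your stated fix the induction would be based on a false structural claim.
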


In \cite{FoGP}, Fox, Grinshpun and Pach posed the following conjecture on an expression for the Gallai-Ramsey numbers of complete graphs in terms of their 2-colored Ramsey numbers.

\noindent\begin{conjecture}\label{conjKt} {\normalfont (\cite{FoGP})}
For integers $k\geq 1$ and $t\geq 3$,
$$GR_k(K_{t})=
\left\{
   \begin{aligned}
    &(R_2(K_t)-1)^{k/2}+1, & & \mbox{if $k$ is even},\\
    &(t-1)\cdot(R_2(K_t)-1)^{(k-1)/2}+1, & & \mbox{if $k$ is odd}.
   \end{aligned}
   \right.$$
\end{conjecture}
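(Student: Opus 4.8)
The plan is to treat the two directions separately and to induct on the number of colors $k$, with Gallai's decomposition theorem driving the reduction. Write $N:=R_2(K_t)-1$. The base cases are immediate: for $k=1$ a Gallai-$1$-coloring is monochromatic, so $GR_1(K_t)=t=(t-1)N^0+1$, matching the odd formula at $\ell=0$; for $k=2$ every $2$-edge-coloring avoids rainbow triangles, so $GR_2(K_t)=R_2(K_t)=N^1+1$, matching the even formula at $\ell=1$. I would then establish the lower bound for all $k$ by an explicit construction and attempt the matching upper bound by induction on $k$.

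For the lower bound I would iterate the substitution (blow-up) construction. Fix a $2$-coloring $\chi$ of $K_N$ with colors $\{a,b\}$ containing no monochromatic $K_t$, which exists by the definition of $R_2(K_t)$. Set $G_0:=K_1$ when $k$ is even and $G_0:=K_{t-1}$ monochromatic in a single color when $k$ is odd. Given $G_{j-1}$, form $G_j$ by taking a copy of $\chi$ on $K_N$ with a \emph{fresh} pair of colors and substituting a copy of $G_{j-1}$ into each of its $N$ vertices. After $\ell:=\lfloor k/2\rfloor$ steps, $G_\ell$ is a Gallai-$k$-coloring of $K_{n^*}$ with $n^*=N^{k/2}$ ($k$ even) or $n^*=(t-1)N^{(k-1)/2}$ ($k$ odd), using exactly $k$ colors. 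A routine induction on $j$ shows $G_\ell$ has no rainbow triangle (a triangle lies inside one block, or meets two or three blocks, and in each case uses at most two colors) and no monochromatic $K_t$ (a clique monochromatic in a top-level color meets each block at most once and hence projects to a monochromatic $K_t$ in $\chi$, while a clique in an interior color lies inside a single block). This gives $GR_k(K_t)\ge n^*+1$, exactly the conjectured value.

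For the upper bound I would argue by induction on $k$ using Gallai's theorem: any Gallai-$k$-coloring of $K_n$ admits a partition $V_1,\dots,V_m$ with $m\ge 2$ whose \emph{reduced} coloring on $K_m$ (edge $ij$ receiving the common color of all $V_i$--$V_j$ edges) uses at most two colors, say red and blue. If $m\ge R_2(K_t)$ then Ramsey's theorem produces a monochromatic $K_t$ in the reduced graph, lifting to a monochromatic $K_t$ in $K_n$; hence $m\le N$. The driving observation is that if $V_i$ is joined in red to some nonempty part, then $V_i$ cannot contain a red $K_{t-1}$, since a red $K_{t-1}$ in $V_i$ together with any vertex of that part would complete a red $K_t$ (and symmetrically for blue). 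In the $t=3$ prototype ``red $K_{t-1}$'' is just ``red edge,'' and one checks over the few triangle-free reduced colorings on $m\le 5$ parts that the extremal configuration is the balanced one on $m=5=N$ parts in which every part is incident to both colors and therefore avoids both internally; each part is then a Gallai-$(k-2)$-coloring, which yields $n\le N\cdot(GR_{k-2}(K_t)-1)$ and closes the induction at the conjectured value. I would try to identify the analogous extremal reduced colorings and the corresponding clean $k\mapsto k-2$ reduction for general $t$.

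The hard part is exactly this extremal analysis for $t\ge 4$. Forbidding a monochromatic red $K_t$ only forbids red $(t-1)$-cliques in parts incident to red, not red edges, so a part need not avoid the reduced colors entirely and one cannot simply drop $k$ by $2$; the recursion couples the clique size $t$ to the number of colors, and the interaction between the reduced $2$-coloring on the $m\le N$ parts and the residual Gallai-colorings inside the parts must be controlled simultaneously. I therefore expect this route to deliver the conjecture cleanly for $t=3$ and, through heavy case analysis on the possible extremal reduced colorings, for small $t$ such as $4$ and $5$, while the general upper bound --- and hence the full conjecture --- remains the crux and is likely to stay open.
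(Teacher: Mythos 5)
You have not proved the statement, but neither does the paper: this is Conjecture~\ref{conjKt} of Fox, Grinshpun and Pach \cite{FoGP}, which the paper explicitly records as open, noting only that the cases $t=3$ and $t=4$ were settled in \cite{ChGr,GSSS} and \cite{LMSSS} respectively; the paper's own contribution is the analogous statement for $K_4+e$ (Theorems~\ref{th:GRk4+e} and~\ref{th:GRk4+egeneral}), offered as further evidence for the conjecture. So there is no paper proof to compare against. Within that frame, your lower bound is correct and is exactly the standard construction: the iterated substitution with a fresh pair of colors per level, seeded by $K_1$ for even $k$ and by a monochromatic $K_{t-1}$ for odd $k$, is the same scheme the paper uses in the lower-bound half of Theorem~\ref{th:GRk4+egeneral} (there with the $K_4$-free $2$-colored $K_{17}$ and the $K_3$-free $2$-colored $K_5$ as building blocks), and your verification that no rainbow triangle and no monochromatic $K_t$ arises is sound, as are the base cases $GR_1(K_t)=t$ and $GR_2(K_t)=R_2(K_t)$.

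The genuine gap is the one you yourself identify: the upper bound for $t\geq 4$. Your diagnosis of why the clean $k\mapsto k-2$ reduction fails is accurate, and it is worth making the failure concrete. For $t=3$, a part of the Gallai partition incident with red in the reduced coloring can contain no red edge at all, so each part is genuinely a Gallai-$(k-2)$-coloring and the recursion $n\leq 5\,(GR_{k-2}(K_3)-1)$ closes. For $t\geq 4$ a red-incident part may still contain red edges (only red $K_{t-1}$'s are forbidden inside it), so the correct induction must track, color by color, \emph{which clique order} is forbidden in each color inside each part; the natural invariant is a mixed Gallai-Ramsey number of the form $GR(K_{t_1},\ldots,K_{t_k})$ with varying $t_i$. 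The paper's proof of Theorem~\ref{th:GRk4+egeneral} shows what this machinery looks like in the smallest nontrivial instance: it strengthens the statement to $GR_k(s\cdot K_4+e,(k-s)\cdot K_3)$, proves structural bounds on the reduced coloring (Fact~\ref{fa:GRk4+egeneral-2}, e.g.\ $d^r_i\leq 3$ for parts containing a red edge and $d^r_i\leq 8$ in general, the latter resting on $R(K_3,K_4+e)=9$), and closes with a weighted count over the parts ($2t+8x_2+3x_1\leq 34$). For general $t$ the analogous degree bounds and part-size bounds are controlled by off-diagonal Ramsey numbers $R(K_i,K_j)$ that are unknown, which is precisely why the conjecture remains open beyond $t=4$; no amount of case analysis on reduced colorings with $m\leq R_2(K_t)-1$ parts can be carried out when $R_2(K_t)$ itself is not known. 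So your proposal should be read as a correct proof of the lower bound together with an honest and well-calibrated assessment that the upper bound is the crux, rather than as a proof of the statement.
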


The cases with $t=3$ and $t=4$ of the above conjecture were verified in \cite{ChGr,GSSS} and \cite{LMSSS}, respectively. Let $K_4+e$ denote the graph on five vertices consisting of a $K_4$ with a pendant edge. We prove the following related result, confirming that the expression in the above conjecture in fact also holds for $K_4+e$ (taking $t=5$), since $R_2(K_4+e)=18$ by a result in \cite{HaMe}.

\noindent\begin{theorem}\label{th:GRk4+e} For integers $k\geq 1$,
$$GR_k(K_{4}+e)=
\left\{
   \begin{aligned}
    &17^{k/2}+1, & & \mbox{if $k$ is even},\\
    &4\cdot 17^{(k-1)/2}+1, & & \mbox{if $k$ is odd}.
   \end{aligned}
   \right.$$
\end{theorem}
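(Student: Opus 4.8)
The plan is to prove the two bounds separately, with the upper bound carrying essentially all of the difficulty. Throughout write $f(k):=GR_k(K_4+e)-1$ for the claimed value, so that $f(2j)=17^j$ and $f(2j+1)=4\cdot 17^j$; the key arithmetic facts are the recursion $f(k)=17\,f(k-2)$ together with the base values $f(1)=4$ and $f(2)=17=R_2(K_4+e)-1$.

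For the lower bound I would exhibit an explicit Gallai-$k$-coloring of $K_{f(k)}$ with no monochromatic $K_4+e$, built by iterated substitution. Fix a $2$-coloring $F$ of $K_{17}$ in which neither color contains a $K_4$; this exists since $R_2(K_4)=18$. Starting from the trivial coloring of $K_1$ when $k$ is even, and from a monochromatic $K_4$ in a single fresh color when $k$ is odd, repeatedly substitute the current coloring into each vertex of a copy of $F$ that uses two fresh colors. Each substitution multiplies the order by $17$ and introduces two new colors, so after $\lfloor k/2\rfloor$ substitutions we reach $K_{f(k)}$ on exactly $k$ colors. This coloring is Gallai: a triangle whose three vertices lie in distinct substitution classes sees only the two top colors on those classes, while a triangle with exactly two vertices in one class has its two cross edges in a common top color, so in either case at most two colors appear. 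It has no monochromatic $K_4+e$: a monochromatic $K_4$ in a top color would, after contracting classes, give a monochromatic $K_4$ in $F$ (the four pairwise-adjacent vertices cannot share a class), which is excluded; and a monochromatic $K_4+e$ in any lower color is connected, hence lies inside a single class and is excluded by induction (in the odd base case the unique monochromatic $K_4$ sits on only four vertices and so cannot be extended). For even $k$ one may alternatively cite $GR_k(K_4)=17^{k/2}+1$ from \cite{LMSSS}, since a coloring avoiding a monochromatic $K_4$ avoids a monochromatic $K_4+e$; but for odd $k$ the two Gallai-Ramsey numbers differ, so the $K_4$-based construction above is genuinely needed.

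For the upper bound I would induct on $k$, proving that every Gallai-$k$-coloring of $K_{f(k)+1}$ contains a monochromatic $K_4+e$. The base cases $k=1$ (a monochromatic $K_5$) and $k=2$ (the definition of $R_2(K_4+e)=18$) are immediate. For the step, apply Gallai's decomposition theorem \cite{Gallai} to obtain a partition $V_1,\dots,V_m$ with $m\ge 2$ whose reduced graph is a $2$-colored $K_m$, with reduced colors say $1$ and $2$. If $m\ge 18$ the reduced $2$-coloring already contains a monochromatic $K_4+e$, which lifts to $K_n$ by choosing one vertex from each relevant class, so we may assume $m\le 17$. Since $f(k)=17\,f(k-2)$, a largest class satisfies $|V_1|\ge\lceil (f(k)+1)/17\rceil\ge f(k-2)+1$. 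If some class of order at least $f(k-2)+1$ uses at most $k-2$ colors, then the induction hypothesis, together with the monotonicity $GR_j\le GR_{k-2}$ for $j\le k-2$, yields a monochromatic $K_4+e$ inside that class, and we are done.

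The heart of the argument is the complementary situation, in which every sufficiently large class uses at least $k-1$ colors, so that at least one reduced color, say color $1$, also appears inside the large classes. Here the monochromatic $K_4+e$ must be built explicitly, using that between two classes joined by color $1$ in the reduced graph \emph{every} cross edge has color $1$. The strategy is to locate a monochromatic $K_4$ in color $1$, either as a color-$1$ triangle inside a class together with one vertex of a color-$1$-adjacent class, or across four pairwise color-$1$-adjacent classes of the reduced graph, and then to supply the pendant edge from one further vertex, which the large class sizes make available because cross-class edges automatically repeat color $1$. The main obstacle, and where the extra work over the $K_4$ case of \cite{LMSSS} lies, is exactly guaranteeing this pendant edge: when a monochromatic $K_4$ is realized on four singleton classes that are mutually isolated in their color (all edges leaving them using the other color), no vertex extends it in that color, and one must instead pass to the large complementary vertex set, find a monochromatic triangle of the second color there, and attach two of the isolated vertices to obtain a $K_4+e$ in the second color. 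Organizing this case distinction, together with the small-$m$ cases $m\in\{2,3,4\}$ (where one reduced color spans almost all of $K_n$ and one combines interior and cross-class monochromatic edges of that color), completes the induction and hence establishes $GR_k(K_4+e)\le f(k)+1$.
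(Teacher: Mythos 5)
Your lower bound is sound and is essentially the paper's construction (iterated substitution into $2$-colored $K_{17}$'s avoiding monochromatic $K_4$, seeded with a monochromatic $K_4$ when $k$ is odd), so that half stands. The upper bound, however, has a genuine gap: you induct on the unstrengthened statement $GR_k(K_4+e)\leq f(k)+1$, and this induction hypothesis is too weak to close the case you yourself identify as the heart of the matter. When a part $V_i$ of the Gallai partition contains an edge in a reduced color (say red), the correct conclusion is that $G[V_i]$ contains no red \emph{triangle} (a red $K_3$ inside $V_i$ plus red cross edges to a red-adjacent part immediately gives a red $K_4+e$), while in the other colors only $K_4+e$ is forbidden. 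So the natural bound on $|V_i|$ is a \emph{mixed} Gallai-Ramsey number in which one or two colors must avoid $K_3$ and the rest avoid $K_4+e$ --- and such a part can be large: e.g.\ for $k$ even it can have order up to $8\cdot 17^{(k-2)/2}\approx 0.47\,f(k)$, far exceeding your threshold $f(k-2)+1=17^{(k-2)/2}+1$. Your dichotomy ``some large class uses at most $k-2$ colors, else build a monochromatic $K_4+e$ explicitly'' cannot resolve this: in the complementary case there need be no color-$1$ triangle inside any class and no color-$1$ $K_4$ in the reduced graph (which has only $m\leq 17$ vertices), and the contradiction must come from a size count over all parts, not from an explicit embedding. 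Your sketch supplies no mechanism for that count.

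This is precisely why the paper proves the stronger statement $GR_k(s\cdot K_4+e,(k-s)\cdot K_3)=g(k,s)+1$ by induction on $k+s$ (Theorem 5.1), so that parts containing red or blue interior edges can be bounded by $g(k,s-1)$, $g(k-1,s-1)$, $g(k-1,s-2)$, $g(k,s-2)$, etc. The induction step then classifies parts into $\mathcal{R}$, $\mathcal{B}$ (those with red, resp.\ blue, interior edges), establishes structural facts (two parts of $\mathcal{R}$ are joined in blue; $|\mathcal{R}\cap\mathcal{B}|\leq 1$; reduced red/blue degrees at most $3$ for parts of $\mathcal{R}$, resp.\ $\mathcal{B}$, and at most $8$ in general via $R(K_4+e,K_3)=9$), and closes with a weighted counting inequality of the form $2t+8x_2+3x_1\leq 34$ where $x_0,x_1,x_2$ count parts by type. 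To repair your proof you would need to strengthen your induction hypothesis to this mixed family (or an equivalent), at which point you would be reconstructing the paper's argument; as written, the step ``organizing this case distinction completes the induction'' does not go through.
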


The remainder of this paper is organized as follows. In Section \ref{sec:ch-preliminaries}, we will introduce some additional terminology and notation, and list some known results that will be used in our proofs of the main results. In Section \ref{sec:ch-proof-RMk3k3}, we will prove Theorem \ref{th:RMk3k3}, using a variant of the Gallai-Ramsey number. In Section \ref{sec:ch-proof-multiplicity}, we will consider the Ramsey multiplicity problem for Gallai-colorings and prove Theorem \ref{th:multiplicity1}. In Section \ref{sec:ch-proof-GRk4+4}, we will prove Theorem \ref{th:GRk4+e} in a more general form. Finally, we will conclude the paper with some remarks and open problems in Section \ref{sec:ch-remark}.

\section{Preliminaries}
\label{sec:ch-preliminaries}

We begin with the following structural result on Gallai-colorings of complete graphs.

\noindent\begin{theorem}\label{th:Gallai} {\normalfont (\cite{Gallai,GySi})}
In any Gallai-coloring of a complete graph, the vertex set can be partitioned into at least two nonempty parts such that there is only one color on the edges between every pair of parts, and there are at most two colors between the parts in total.
\end{theorem}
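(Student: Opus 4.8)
The plan is to prove the decomposition by induction on the number $k$ of colors actually used, exploiting the no-rainbow-triangle condition either to terminate immediately or to strip off one color. If $k\le 2$, the partition into singletons already works: between any two singletons there is a single edge, hence one color, and at most two colors occur in total. So assume $k\ge 3$ and that the statement holds for every Gallai-coloring of a complete graph (on at least two vertices) using fewer than $k$ colors. For a used color $i$, write $G_i$ for the spanning subgraph of color-$i$ edges and $\overline{G_i}$ for its complement (the edges of color $\ne i$). The whole argument turns on the connectivity of these two graphs.

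First I would dispose of the two ``reducible'' cases. Suppose $\overline{G_i}$ is disconnected for some used color $i$, with components $D_1,\dots,D_q$ ($q\ge 2$). Since there is no $\overline{G_i}$-edge between distinct $D_t$, every pair of vertices in different components has color $i$; thus the partition $\{D_1,\dots,D_q\}$ has monochromatic connections and a quotient using the single color $i$, and we are done. Next suppose instead that $G_i$ is disconnected for some used color $i$, with components $C_1,\dots,C_p$ ($p\ge 2$). Using that there is no rainbow triangle one checks that the color between two components is constant: if $x,x'\in C_a$ are joined by a color-$i$ edge and $y\in C_b$ with $a\ne b$, then $c(xy),c(x'y)\ne i$, so the triangle $xx'y$ forces $c(xy)=c(x'y)$; propagating along color-$i$ paths inside $C_a$ and $C_b$ makes the color between $C_a$ and $C_b$ constant. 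Hence $\{C_1,\dots,C_p\}$ is a partition with monochromatic connections, and since color $i$ never appears between components, its quotient is a Gallai-coloring of $K_p$ (on $p\ge 2$ vertices) using at most $k-1$ colors. Applying the induction hypothesis to this quotient and pulling the resulting partition back (merging components exactly as the quotient prescribes) yields a partition of the original vertex set into at least two parts, with monochromatic connections and a quotient using at most two colors.

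The only case left---and the main obstacle---is when every used color $i$ has both $G_i$ and $\overline{G_i}$ connected; here the two reductions above are unavailable, and I must show this situation is impossible once $k\ge 3$. Equivalently, I have to prove the core statement that a Gallai-coloring of $K_m$ with no nontrivial homogeneous set (no set $S$ with $2\le|S|\le m-1$ seen in a single color from every outside vertex) uses at most two colors; note that a disconnected $G_i$ or $\overline{G_i}$ is exactly what produces such a homogeneous set, so the hypotheses of this last case say precisely that the coloring is ``prime.'' A few easy preliminary reductions are available---for instance no vertex can be incident to a single color, since that vertex would be isolated in the corresponding $\overline{G_i}$---but the substance lies in locating a homogeneous set from the local structure: around a vertex $v$ the color classes $N_i(v)$ satisfy that edges between $N_i(v)$ and $N_j(v)$ use only colors $i$ and $j$, and one must combine these two-colored links across all vertices to force either a common homogeneous set or a reduction to two colors.

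I expect this prime case to be the entire difficulty; the inductive skeleton and the two reduction steps are routine, whereas ruling out a prime Gallai-coloring on three or more colors is the content of Gallai's original decomposition theorem for comparability graphs \cite{Gallai} (restated for colorings in \cite{GySi}). I would therefore either invoke that correspondence---a prime comparability structure admits an essentially unique transitive orientation, which pins the number of colors to two---or give a direct, self-contained argument that extracts a nontrivial homogeneous set from the two-colored links described above, thereby contradicting primality whenever a third color is present.
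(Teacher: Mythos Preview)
The paper does not prove this theorem; it is quoted as a known structural result from \cite{Gallai,GySi} and used as a black box throughout, so there is no proof in the paper to compare against.

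As for your proposal itself: the inductive skeleton and the two reduction cases are correct. In particular, when some $G_i$ is disconnected, your propagation argument along color-$i$ paths is valid and the quotient is indeed a Gallai-coloring on fewer colors, so the pullback of the inductively obtained partition works exactly as you describe. The genuine gap is the last case. You correctly isolate the ``prime'' situation (every $G_i$ and every $\overline{G_i}$ connected) as the heart of the matter, but you do not actually prove anything there: you say you would either invoke Gallai's comparability-graph decomposition or ``give a direct, self-contained argument that extracts a nontrivial homogeneous set,'' without doing either. Since the entire content of the theorem is precisely that a prime Gallai-coloring uses at most two colors, your write-up as it stands is an outline that reduces the theorem to itself. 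If you want a complete proof you must actually carry out that last step---for instance via the modular decomposition of the comparability graph as in \cite{Gallai}, or by the direct argument in \cite{GySi}---rather than promising one.
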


We call a vertex partition as given by the statement in Theorem~\ref{th:Gallai} a {\it Gallai partition}. Below we listed some known exact values of Gallai-Ramsey numbers and Ramsey numbers.

\noindent\begin{theorem}\label{th:GRk3} {\normalfont (\cite{ChGr,GSSS})} For integers $k\geq 1$, we have
$$GR_k(K_3)=\left\{
                \begin{aligned}
                &5^{k/2}+1, & & \mbox{if $k$ is even},\\
                &2\cdot5^{(k-1)/2}+1, & & \mbox{if $k$ is odd}.
              \end{aligned}
           \right.$$
\end{theorem}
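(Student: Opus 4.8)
The plan is to prove both bounds by induction on $k$, using the Gallai partition from Theorem~\ref{th:Gallai} for the upper bound and an explicit recursive construction for the lower bound. Write $N_k$ for the claimed value, so $N_k=5^{k/2}+1$ for even $k$ and $N_k=2\cdot5^{(k-1)/2}+1$ for odd $k$; note that $N_k-1=5\,(N_{k-2}-1)$ in both parities. The base cases are $GR_1(K_3)=3$ (a single color makes $K_3$ monochromatic) and $GR_2(K_3)=R_2(K_3)=6=5+1$ (the classical two-color fact). For the inductive step I would establish $GR_k(K_3)\ge N_k$ and $GR_k(K_3)\le N_k$ separately, the latter invoking the values $GR_{k-1}(K_3)$ and $GR_{k-2}(K_3)$ supplied by the induction hypothesis.

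For the lower bound I would give a recursive \emph{pentagon blow-up}. Start from $K_1$ when $k$ is even, or from a single edge $K_2$ in color $1$ when $k$ is odd; this initial coloring has no rainbow and no monochromatic triangle. Each step takes five disjoint copies of the current colored complete graph and joins them according to the two-coloring of $K_5$ consisting of a $5$-cycle in one new color and its complementary $5$-cycle in a second new color, using two fresh colors. A triangle then either lies inside one copy, or meets exactly two copies (its two cross edges share a color), or meets three copies (so it sees only the two new colors and corresponds to a triangle of the pentagon); in every case it uses at most two colors and is not monochromatic, so the property is preserved. After $k/2$ steps (even $k$) or $(k-1)/2$ steps (odd $k$) the construction has $N_k-1$ vertices and uses exactly $k$ colors, whence $GR_k(K_3)\ge N_k$.

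For the upper bound I would suppose, for contradiction, that some Gallai-$k$-coloring of $K_{N_k}$ has no monochromatic triangle, and apply Theorem~\ref{th:Gallai} to obtain parts $V_1,\dots,V_m$ with $m\ge2$ whose reduced complete graph is colored with at most two colors $c_1,c_2$. A monochromatic triangle in the reduced graph would lift to one in $K_{N_k}$ by picking a representative from each of three parts, so the reduced coloring must be triangle-free in each color; since $R_2(K_3)=6$, this already forces $m\le5$. The key observation is that if a color $c$ occurs on edges leaving $V_i$, then $V_i$ has no internal edge of color $c$, for an internal edge $uv$ of color $c$ together with any vertex $w$ in a part joined to $V_i$ by $c$ would form a monochromatic triangle. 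Writing $d_i$ for the number of colors on edges leaving $V_i$, the coloring induced on $V_i$ is thus a Gallai-coloring in at most $k-d_i$ colors, so the induction hypothesis gives $|V_i|\le GR_{k-d_i}(K_3)-1$.

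Summing, $N_k=\sum_i|V_i|\le\sum_i\big(GR_{k-d_i}(K_3)-1\big)$, and the crux is to bound the right-hand side by $N_k-1$. Since each $d_i\in\{1,2\}$, this reduces to a finite statement about triangle-free two-colorings of $K_m$ with $2\le m\le5$: writing $a$ and $b$ for the numbers of parts with $d_i=2$ and $d_i=1$, one needs $a+2b\le5$ (even $k$) and the analogous $2a+5b\le10$ (odd $k$). The main point, and the step I expect to require the most care, is that for $m\ge4$ no part can have $d_i=1$, since a part joined to all others by a single color $c$ forces every edge among the remaining $\ge3$ parts to avoid $c$, producing a monochromatic clique and hence a triangle in the other color; the cases $m\le3$ are then checked directly. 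Plugging in $GR_{k-1}(K_3)-1$ and $GR_{k-2}(K_3)-1$ gives $\sum_i\big(GR_{k-d_i}(K_3)-1\big)\le 5\,(N_{k-2}-1)=N_k-1$, the desired contradiction. Combining this with the construction yields $GR_k(K_3)=N_k$.
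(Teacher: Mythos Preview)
The paper does not give a proof of Theorem~\ref{th:GRk3}; it is stated in the Preliminaries as a known result from \cite{ChGr,GSSS} and used as a black box. Your argument is correct and is essentially the standard one from those references: the pentagon blow-up for the lower bound, and Gallai partition plus induction on $k$ for the upper bound, with the reduced graph forced to have $m\le 5$ parts by $R_2(K_3)=6$ and the per-part bound $|V_i|\le GR_{k-d_i}(K_3)-1$. In fact the paper reproduces exactly this mechanism in its proof of Lemma~\ref{le:gr*} (for the variant $GR^{\ast}_k(K_3)$), handling the cases $m\in\{2,3,4,5\}$ just as you do, including the observation that a part with $d_i=1$ forces a monochromatic clique among the remaining parts when $m\ge 4$. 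So although there is no in-paper proof to compare against, your approach matches both the cited sources and the paper's own deployment of the technique elsewhere.
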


\noindent\begin{theorem}\label{th:ramsey} The following Ramsey numbers have been established:
\begin{itemize}
  \item[{\rm (1)}] {\normalfont (\cite{GrGl})} $R(K_3, K_3)=6$, $R(K_{4}, K_{4})=18$.

  \item[{\rm (2)}] {\normalfont (\cite{Clan})} $R(K_{4}+e, K_3)=9$.

  \item[{\rm (3)}] {\normalfont (\cite{HaMe})} $R(K_{4}+e, K_{4}+e)=18$.
\end{itemize}
\end{theorem}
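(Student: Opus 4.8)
The plan is to establish the four Ramsey numbers in order, building each on the previous, and to reduce the two statements about $K_4+e$ to the classical two-color numbers via a short ``outside-vertex'' pigeonhole argument. I would first dispatch $R(K_3,K_3)=6$. Since any $2$-edge-coloring of a complete graph is automatically rainbow-triangle-free (two colors cannot produce three distinct colors on one triangle), we have $GR_2(K_3)=R_2(K_3)=R(K_3,K_3)$, and Theorem~\ref{th:GRk3} with $k=2$ gives $5^{1}+1=6$. Equivalently, one runs the standard argument: in $K_6$ some vertex meets three like-colored edges, and the triangle on those three neighbors closes a monochromatic $K_3$, while the self-complementary $5$-cycle coloring of $K_5$ shows $6$ is best possible.

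Next I would prove $R(K_4,K_4)=18$. The upper bound rests on the Erd\H{o}s--Szekeres recursion $R(s,t)\le R(s-1,t)+R(s,t-1)$, sharpened by the fact that the inequality is strict when both summands are even. This yields $R(K_3,K_4)\le R(K_2,K_4)+R(K_3,K_3)-1=4+6-1=9$; with the matching lower bound from an explicit triangle-free coloring of $K_8$ whose complement is $K_4$-free, we obtain $R(K_3,K_4)=R(K_4,K_3)=9$, and then $R(K_4,K_4)\le R(K_3,K_4)+R(K_4,K_3)=18$. For the lower bound $R(K_4,K_4)\ge 18$ I would exhibit the Paley graph $P_{17}$ on the field $\mathbb{F}_{17}$ (joining quadratic-residue differences): it is self-complementary, so it suffices to verify that it contains no $K_4$, which follows from a short transitivity and residue-counting argument. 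This $K_4$-freeness check is the first genuine obstacle, being the one step with no purely inductive shortcut.

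I would then handle $R(K_4+e,K_3)=9$ by a clean reduction. The bound $R(K_4+e,K_3)\ge R(K_4,K_3)=9$ is immediate because $K_4\subseteq K_4+e$, so any coloring of $K_8$ avoiding a red $K_4$ and a blue $K_3$ also avoids a red $K_4+e$. For the upper bound, take a $2$-coloring of $K_9$ with no blue $K_3$; since $R(K_4,K_3)=9$ there is a red $K_4$ on a set $A$. If any of the five remaining vertices sends a red edge into $A$ we get a red $K_4+e$ at once; otherwise all edges from the five outside vertices to $A$ are blue, whence no two outside vertices can be joined in blue (that would close a blue triangle through a vertex of $A$), so the five outside vertices induce a red $K_5\supseteq K_4+e$. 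This same dichotomy, packaged once, is the engine for the final equality.

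Finally, for $R(K_4+e,K_4+e)=18$ the lower bound is again free, since $K_4\subseteq K_4+e$ gives $R(K_4+e,K_4+e)\ge R(K_4,K_4)=18$. For the upper bound I would take any $2$-coloring of $K_{18}$, extract a monochromatic $K_4$ (say red, on $A$) from $R(K_4,K_4)=18$, and argue on the $14$ outside vertices: if some outside vertex meets $A$ in red we obtain a red $K_4+e$, so assume all $14\times 4$ such edges are blue. Then either the outside vertices contain a blue triangle $\{u,v,w\}$, in which case a vertex $a\in A$ completes a blue $K_4$ on $\{a,u,v,w\}$ and a second, distinct vertex of $A$ supplies a blue pendant edge at $u$, yielding a blue $K_4+e$; or the blue graph on the $14$ outside vertices is triangle-free, and then $14\ge R(K_4+e,K_3)=9$ forces a red $K_4+e$. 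I expect the main difficulty of the whole theorem to remain concentrated in the $R(K_4,K_4)$ step — both the parity-refined recursion bookkeeping and especially the $K_4$-freeness of $P_{17}$ — while the two $K_4+e$ statements should fall out of the reductions above after only a routine check that the pendant edges land on distinct vertices.
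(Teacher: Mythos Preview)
The paper does not prove Theorem~\ref{th:ramsey} at all: it is stated purely as a list of known results, each with a citation to the original source (\cite{GrGl}, \cite{Clan}, \cite{HaMe}), and no argument is given. So there is no ``paper's own proof'' to compare against; your proposal supplies genuine proofs where the paper simply quotes the literature.

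That said, your arguments are essentially correct and form a coherent self-contained treatment. Two small remarks. First, invoking Theorem~\ref{th:GRk3} to deduce $R(K_3,K_3)=6$ is circular in spirit, since the Gallai--Ramsey result for $K_3$ is itself built on $R(K_3,K_3)=6$; you already note the direct pigeonhole proof, and that is the one to keep. Second, for the lower bound $R(K_3,K_4)\ge 9$ you should name a concrete witness (e.g.\ the $3$-regular circulant on $\mathbb{Z}_8$ with connection set $\{\pm 1,4\}$, which is triangle-free with independence number $3$); the rest of your chain of inequalities and the Paley-graph verification for $R(K_4,K_4)\ge 18$ are standard. Your ``outside-vertex'' reductions for the two $K_4+e$ numbers are clean and correct, including the check that the pendant vertex in the blue $K_4+e$ case is distinct from the four clique vertices.
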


For a graph $H$, let $\Delta (H)$ and $\chi(H)$ be the maximum degree and chromatic number of $H$, respectively. Given an edge-colored graph $F$ and an edge $e\in E(F)$, let $c_F(e)$ (or simply $c(e)$) be the color used on (i.e., assigned to) edge $e$. For $U$, $V\subseteq V(F)$ with $U\cap V= \emptyset$, we use $E(U, V)$ (resp., $C(U, V)$) to denote the set of edges between $U$ and $V$ (resp., the set of colors used on the edges between $U$ and $V$). If all the edges in $E(U, V)$ are colored by a single color, then we use $c(U, V)$ to denote this color. Let $F[U]$ be the subgraph of $F$ induced by $U\subseteq V(F)$, and $F-U$ be the subgraph of $F$ induced by $V(F)\setminus U$ (if $U\neq V(F)$). In the special case when $U=\{u\}$, we simply write $E(u, V)$, $C(u, V)$, $c(u, V)$ and $F-u$ for $E(\{u\}, V)$, $C(\{u\}, V)$, $c(\{u\}, V)$ and $F-\{u\}$, respectively. Let $C(F[U])$ (or simply, $C(U)$) and $C(F-U)$ denote the set of colors used on $E(F[U])$ and $E(F-U)$, respectively. For two graphs $F_1$ and $F_2$, let $F_1\cup F_2$ be the disjoint union of $F_1$ and $F_2$.

Next, we define the blow-up of an edge-colored complete graph which will be used in our proofs of Theorems \ref{th:multiplicity1} and \ref{th:GRk4+e}. Let $G$ be an edge-colored complete graph with vertex set $\{v_1, v_2, \ldots, v_n\}$, and $H_1, H_2, \ldots, H_n$ be $n$ pairwise disjoint edge-colored complete graphs. The {\it blow-up} $G(H_1, H_2, \ldots, H_n)$ of $G$ is an edge-colored complete graph with vertex set $\bigcup^{n}_{i=1}V(H_i)$ such that
$$c_{G(H_1, H_2, \ldots, H_n)}(xy)=
\left\{
   \begin{aligned}
    &c_G(v_iv_j), & & \mbox{if $x\in V(H_i)$ and $y\in V(H_j)$ for some $1\leq i\neq j\leq n$},\\
    &c_{H_i}(xy), & & \mbox{if $x, y\in V(H_i)$ for some $i\in [n]$}.
   \end{aligned}
   \right.$$
If $H_1=H_2=\cdots=H_n=H$, we will write $G(n\cdot H)$ for $G(H, H, \ldots, H)$. If $H_1=\cdots=H_s=H'$ and $H_{s+1}=\cdots=H_n=H''$ for some $1\leq s < n$, we will write $G(s\cdot H', (n-s)\cdot H'')$ for $G(H', \ldots, H', H'', \ldots, H'')$. Similarly, we will use the abbreviation $G(s\cdot H', t\cdot H'', (n-s-t)\cdot H''')$.

In the following, we will introduce the Regularity Lemma, Embedding Lemma and Slicing Lemma that will be used in our proof of Theorem \ref{th:RMk3k3}. Given a graph $F$ and two disjoint nonempty sets $X, Y\subseteq V(F)$, the {\it density} of $(X, Y)$ is defined to be $$d(X,Y):=\frac{|E(X,Y)|}{|X| |Y|}.$$ We say that $(X, Y)$ is {\it $\varepsilon$-regular} if for any $X'\subseteq X$ and $Y'\subseteq Y$ with $|X'|\geq \varepsilon |X|$ and $|Y'|\geq \varepsilon |Y|$, we have $|d(X', Y')-d(X, Y)|\leq \varepsilon$. For a positive real number $d$, we say that an $\varepsilon$-regular pair $(X, Y)$ is {\it $(\varepsilon, d)$-regular} if $d(X,Y)\geq d$.

\noindent\begin{lemma}\label{le:regularity} (Multicolor Regularity Lemma) {\normalfont (see e.g. \cite{KoSi,LiPS,Sze})} For any real $\varepsilon >0$ and positive integers $k$ and $m$, there exist $n'$ and $M$, such that every $k$-edge-colored graph $F$ with $n\geq n'$ vertices admits a partition $V_1, V_2, \ldots, V_t$ of $V(F)$ satisfying
\begin{itemize}
  \item[{\rm (i)}] $m\leq t\leq M$;
  \item[{\rm (ii)}] for all $1\leq i<j \leq t$, we have $||V_i|-|V_j||\leq 1$; and
  \item[{\rm (iii)}] for all but at most $\varepsilon \binom{t}{2}$ pairs $(i,j)$, the pair $(V_i, V_j)$ is $\varepsilon$-regular for each color.
\end{itemize}
\end{lemma}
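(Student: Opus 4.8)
The plan is to prove this by the standard energy-increment (index) argument of Szemer\'{e}di, carried out simultaneously over all $k$ colors. For a partition $\mathcal{P}=\{V_1,\ldots,V_t\}$ of $V(F)$ and a color $c\in[k]$, write $d_c(V_i,V_j)$ for the density of the color-$c$ edges between $V_i$ and $V_j$, and define the combined mean-square-density index
$$q(\mathcal{P})=\sum_{c=1}^{k}\sum_{1\le i<j\le t}\frac{|V_i|\,|V_j|}{n^2}\,d_c(V_i,V_j)^2.$$
Since $\sum_{i<j}|V_i|\,|V_j|/n^2\le 1/2$ and every density lies in $[0,1]$, we have $0\le q(\mathcal{P})\le k/2$, so the index is a bounded potential. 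The two facts that drive the proof are: (a) refining $\mathcal{P}$ never decreases $q(\mathcal{P})$, which follows color by color from the convexity of $x\mapsto x^2$ (equivalently, from Cauchy--Schwarz applied within each refined pair); and (b) a quantitative defect version of (a) that converts irregularity into a strictly positive gain.

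First I would establish the boosting step. If a pair $(V_i,V_j)$ fails to be $\varepsilon$-regular in color $c$, there are witnessing subsets $X\subseteq V_i$ and $Y\subseteq V_j$ with $|X|\ge\varepsilon|V_i|$, $|Y|\ge\varepsilon|V_j|$ and $|d_c(X,Y)-d_c(V_i,V_j)|>\varepsilon$; splitting $V_i$ along $X$ and $V_j$ along $Y$ increases the color-$c$ contribution of this pair by at least $\varepsilon^4\,|V_i|\,|V_j|/n^2$, by the defect form of Cauchy--Schwarz. Declaring a pair \emph{bad} if it is irregular in at least one color, suppose $\mathcal{P}$ has more than $\varepsilon\binom{t}{2}$ bad pairs. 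Then by pigeonhole some fixed color $c$ is responsible for more than $(\varepsilon/k)\binom{t}{2}$ of them, and refining every $V_i$ simultaneously along all witness sets coming from the $c$-irregular pairs it belongs to (a refinement into at most $t\cdot 2^{t-1}$ parts) yields a partition $\mathcal{Q}$ with $q(\mathcal{Q})\ge q(\mathcal{P})+\eta$, where, using that the parts are nearly equal, $\eta\ge\varepsilon^5/(2k)$ is a positive constant depending only on $\varepsilon$ and $k$.

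With the boosting step in hand, the argument is an iteration. I would start from any equitable partition into at least $m$ parts and repeatedly apply the boosting step as long as more than $\varepsilon\binom{t}{2}$ pairs are bad. Because $q$ increases by at least $\eta$ at each step and is capped by $k/2$, the process halts after at most $k^2/\varepsilon^5$ refinements, at which point conclusion~(iii) holds. Tracking the growth $t\mapsto t\cdot 2^{t-1}$ through this bounded number of steps gives an explicit $M$ (a tower-type function of $m$, $k$ and $\varepsilon^{-1}$), which forces~(i); choosing $n'$ large compared with $M$ makes the nearly-equal parts large enough for regularity to be meaningful and lets us absorb rounding, giving~(i) and~(ii).

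The step I expect to cause the most trouble is not the energy increment itself but the bookkeeping needed to keep the partition equitable across iterations. Each boosting step produces parts of very unequal sizes, so after refining I would re-cut every current part into a common supply of equal-size atoms, sweep the bounded leftover into a small exceptional set, and redistribute to enforce $\bigl||V_i|-|V_j|\bigr|\le 1$; carrying this out without erasing the index gain $\eta$ or inflating the bad-pair count beyond $\varepsilon\binom{t}{2}$ is the delicate part. Combined with the union bound over the $k$ colors --- which is exactly what forces the per-color irregularity threshold $(\varepsilon/k)\binom{t}{2}$ used above, so that $k$ per-color contributions still leave at most $\varepsilon\binom{t}{2}$ genuinely bad pairs --- this is the only place where care beyond the classical single-color argument is required.
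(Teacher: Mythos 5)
The paper does not prove this lemma at all --- it is quoted as a standard tool with citations to Szemer\'{e}di, Koml\'{o}s--Simonovits and Liu--Pikhurko--Sharifzadeh --- and your sketch is precisely the canonical simultaneous-$k$-color energy-increment proof found in those sources: the bounded index $q(\mathcal{P})\le k/2$, the defect Cauchy--Schwarz boosting step with gain $\eta\ge\varepsilon^5/(2k)$, termination after at most $k^2/\varepsilon^5$ refinements giving a tower-type $M$, and the standard equitabilization via equal-size atoms with an absorbable exceptional set. Your outline is sound as it stands; the only (harmless) deviation from the usual presentation is the pigeonhole to a single color, which is not needed --- one can refine along the witness sets of all colors simultaneously, paying $2^{k(t-1)}$ instead of $2^{t-1}$ atoms per part and recovering the gain without the $1/k$ factor --- so there is nothing to correct.
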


We call the partition as given in Lemma \ref{le:regularity} a {\it multicolored $\varepsilon$-regular partition}. Given $\varepsilon, d >0$, a $k$-edge-colored graph $F$ and a partition $V_1, V_2, \ldots, V_t$ of $V(F)$, we define the {\it reduced graph} $R=R(d)$ as follows: $V(R)=\{1,2, \ldots, t\}$ and $i$ and $j$ are adjacent in $R$ if $(V_i, V_j)$ is $\varepsilon$-regular for each color and there exists a color with density at least $d$ in $E(V_i,V_j)$. Moreover, we define the {\it multicolored reduced graph} $R^c=R^c(d)$ as follows: $V(R^c)=V(R)$, $E(R^c)=E(R)$, and for each edge $ij \in E(R^c)$, $ij$ is assigned an arbitrary color $c_0$ such that $(V_i,V_j)$ has density at least $d$ with respect to the subgraph of $F$ induced by the edges of color $c_0$.

Given two graphs $G$ and $H$, we say that $G$ is a {\it homomorphic copy} of $H$ if there is a map $\varphi :V(H)\rightarrow V(G)$ such that $\varphi(u)\varphi(v)\in E(G)$ for each edge $uv\in E(H)$. Note that $K_{s}$ is a homomorphic copy of $H$ if and only if $s\geq \chi(H)$.
We will use the following consequence of the Embedding Lemma. Lemma \ref{le:embedding} below is in fact a corollary of Lemma 2.4 in \cite{HoLO'}.

\noindent\begin{lemma}\label{le:embedding} (Multicolor Embedding Lemma) {\normalfont (see e.g. \cite{HoLO',HoLO,KoSi})} For every $d>0$, any positive integer $k$ and any graph $G$, there exist $\varepsilon=\varepsilon(k, d, G)>0$ and a positive integer $n_0=n_0(k, d, G)$ with the following property. Suppose that $F$ is a $k$-edge-colored graph on $n\geq n_0$ vertices with a multicolored $\varepsilon$-regular partition $V_1, V_2, \ldots, V_t$ which defines the multicolored reduced graph $R^c=R^c(d)$. If $R^c$ contains a monochromatic homomorphic copy of $G$, then $F$ contains a monochromatic copy of $G$. If $R^c$ contains a rainbow copy of $G$, then $F$ contains a rainbow copy of $G$.
\end{lemma}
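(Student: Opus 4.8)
The plan is to reduce both assertions to the standard greedy embedding argument for regular pairs (the ``Key Lemma''), with $\varepsilon=\varepsilon(k,d,G)$ chosen small relative to $d$ and $\Delta=\Delta(G)$. First I would fix parameters: write $h=|V(G)|$ and pick $\varepsilon>0$ so small that $(d-\varepsilon)^{\Delta}>2\Delta\varepsilon$ and $\varepsilon<d/2$; then choose $n_0$ large enough that, by condition (ii) of the partition in Lemma \ref{le:regularity}, every cluster $V_i$ satisfies $(d-\varepsilon)^{\Delta}|V_i|>h+\Delta\varepsilon|V_i|$. The point of these inequalities is that candidate sets constructed below shrink by a controlled factor and never collapse.

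\noindent\textbf{Monochromatic case.} Suppose $R^{c}$ contains a monochromatic homomorphic copy of $G$ in some color $c_0$, given by $\varphi\colon V(G)\to V(R^{c})$ with every edge $\varphi(u)\varphi(v)$ present in $R^{c}$ and colored $c_0$. Since $R^{c}$ has no loops, whenever $\varphi(u)=\varphi(v)$ we must have $uv\notin E(G)$; hence each preimage $\varphi^{-1}(i)$ is an independent set of $G$, and the only edges of $G$ we need to realize run between distinct clusters forming $(\varepsilon,d)$-regular pairs in color $c_0$. I would then embed greedily. Order $V(G)=\{w_1,\ldots,w_h\}$ and maintain, for each not-yet-embedded $w_j$, a candidate set $T_j\subseteq V_{\varphi(w_j)}$ (initially $T_j=V_{\varphi(w_j)}$) consisting of all vertices joined in color $c_0$ to every already-embedded neighbor of $w_j$. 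When embedding $w_\ell$, choose $x_\ell\in T_\ell$ distinct from the vertices already used, and for each unembedded neighbor $w_j$ of $w_\ell$ replace $T_j$ by $T_j\cap N_{c_0}(x_\ell)$. Regularity controls the shrinkage: as long as $|T_j|\geq \varepsilon|V_{\varphi(w_j)}|$, the $(\varepsilon,d)$-regularity of the pair $(V_{\varphi(w_\ell)},V_{\varphi(w_j)})$ in color $c_0$ shows that all but at most $\varepsilon|V_{\varphi(w_\ell)}|$ choices of $x_\ell$ satisfy $|T_j\cap N_{c_0}(x_\ell)|\geq (d-\varepsilon)|T_j|$. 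Since each $w_j$ has at most $\Delta$ neighbors, $|T_j|$ never drops below $(d-\varepsilon)^{\Delta}|V_{\varphi(w_j)}|$, which by the choice of $\varepsilon$ and $n_0$ exceeds $h+\Delta\varepsilon|V_{\varphi(w_j)}|$. Thus at each step a valid $x_\ell$ exists, and the procedure produces a monochromatic copy of $G$ in color $c_0$ inside $F$.

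\noindent\textbf{Rainbow case.} If $R^{c}$ contains a rainbow copy of $G$, the embedding $\varphi$ is injective and each edge $\varphi(u)\varphi(v)$ carries a distinct color $c_{uv}$, so the corresponding pair $(V_{\varphi(u)},V_{\varphi(v)})$ has density at least $d$ in that color. I would run the identical greedy procedure, but when updating the candidate set of an unembedded neighbor $w_j$ of the just-embedded $w_\ell$, intersect with $N_{c_{w_\ell w_j}}(x_\ell)$, using the color assigned to that particular edge. The same regularity estimate applies edge by edge, yielding a copy of $G$ in $F$ whose edges receive pairwise distinct colors, i.e.\ a rainbow copy.

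The conceptual content is routine once the Key Lemma machinery is in place; the delicate part, and the step I expect to demand the most care, is the quantitative bookkeeping---choosing $\varepsilon=\varepsilon(k,d,G)$ so that the accumulated shrinkage factor $(d-\varepsilon)^{\Delta}$ stays comfortably above the regularity threshold, and choosing $n_0$ so that the balanced clusters are large enough to absorb up to $h$ vertices of $G$ together with their candidate sets. Handling the non-injectivity in the monochromatic case---several vertices of $G$ sharing one cluster---requires only the observation that such vertices are pairwise non-adjacent in $G$, so no intra-cluster edge must be realized. Since this is precisely the content of Lemma~2.4 in \cite{HoLO'}, one may alternatively obtain the statement directly as a corollary, as indicated.
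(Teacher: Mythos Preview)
Your proposal is correct, but note that the paper does not actually prove this lemma: it is stated with citations and the remark that it ``is in fact a corollary of Lemma~2.4 in \cite{HoLO'}''. Your greedy embedding argument via the Key Lemma is exactly the standard proof underlying those cited results, and you yourself acknowledge the corollary route at the end, so there is no discrepancy to resolve.
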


\noindent\begin{lemma}\label{le:slicing} (Slicing Lemma) {\normalfont (see e.g. \cite{KoSi,LiPS})} Let $0<\varepsilon, \alpha, d <1$ with $\varepsilon\leq \min\left\{d, \alpha, 1/2\right\}$. If a pair $(X,Y)$ is $(\varepsilon, d)$-regular, then for any $X'\subseteq X$ and $Y'\subseteq Y$ with $|X'|\geq \alpha |X|$ and $|Y'|\geq \alpha |Y|$, we have that $(X', Y')$ is an $(\varepsilon', d-\varepsilon)$-regular pair, where $\varepsilon':=\max\left\{2\varepsilon, \varepsilon/\alpha\right\}$.
\end{lemma}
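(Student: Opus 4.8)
The plan is to verify directly the two defining properties of an $(\varepsilon', d-\varepsilon)$-regular pair for $(X', Y')$ from the $(\varepsilon,d)$-regularity of $(X,Y)$, the only real content being careful bookkeeping of which subsets qualify as admissible test sets at each scale. The single structural idea is that the hypothesis $\varepsilon\leq\alpha$ forces $X'$ and $Y'$ themselves to be legitimate test sets inside $(X,Y)$, while the choice $\varepsilon'\geq\varepsilon/\alpha$ guarantees that test sets of relative size $\varepsilon'$ inside $(X',Y')$ remain legitimate test sets inside $(X,Y)$.

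First I would establish the density bound $d(X',Y')\geq d-\varepsilon$. Since $|X'|\geq\alpha|X|\geq\varepsilon|X|$ and $|Y'|\geq\alpha|Y|\geq\varepsilon|Y|$ (using $\alpha\geq\varepsilon$), the pair $(X',Y')$ is itself an admissible test pair in the definition of $\varepsilon$-regularity of $(X,Y)$. Hence $|d(X',Y')-d(X,Y)|\leq\varepsilon$, and because $(X,Y)$ is $(\varepsilon,d)$-regular we have $d(X,Y)\geq d$, so $d(X',Y')\geq d(X,Y)-\varepsilon\geq d-\varepsilon$.

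Next I would prove $\varepsilon'$-regularity of $(X',Y')$. Fix arbitrary $X''\subseteq X'$ and $Y''\subseteq Y'$ with $|X''|\geq\varepsilon'|X'|$ and $|Y''|\geq\varepsilon'|Y'|$; the task is to bound $|d(X'',Y'')-d(X',Y')|$ by $\varepsilon'$. The key estimate is that $X''$ and $Y''$ are still large \emph{relative to} $X$ and $Y$: using $\varepsilon'\geq\varepsilon/\alpha$ together with $|X'|\geq\alpha|X|$ gives $|X''|\geq\varepsilon'|X'|\geq(\varepsilon/\alpha)(\alpha|X|)=\varepsilon|X|$, and symmetrically $|Y''|\geq\varepsilon|Y|$. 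Thus $(X'',Y'')$ is also an admissible test pair for $(X,Y)$, whence $|d(X'',Y'')-d(X,Y)|\leq\varepsilon$. Combining this with $|d(X',Y')-d(X,Y)|\leq\varepsilon$ from the previous step via the triangle inequality yields
$$|d(X'',Y'')-d(X',Y')|\leq 2\varepsilon\leq\varepsilon',$$
which is exactly $\varepsilon'$-regularity. Together with the density bound, this shows that $(X',Y')$ is $(\varepsilon',d-\varepsilon)$-regular.

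I do not expect a genuine obstacle here; the entire content lies in choosing the two constants inside $\varepsilon'=\max\{2\varepsilon,\varepsilon/\alpha\}$ so that both scale comparisons go through simultaneously. The only point requiring care is that the relaxed threshold $\varepsilon'$ for test sets of $(X',Y')$ must be large enough to translate into the absolute threshold $\varepsilon$ inside $(X,Y)$ (handled by the $\varepsilon/\alpha$ term) while still leaving the triangle-inequality step valid (handled by the $2\varepsilon$ term); taking the maximum of the two reconciles these competing demands.
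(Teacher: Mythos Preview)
Your proof is correct and is the standard argument for the Slicing Lemma. Note, however, that the paper does not actually prove this lemma: it is stated as Lemma~\ref{le:slicing} with a citation to the literature (e.g., \cite{KoSi,LiPS}) and no proof is given, so there is nothing to compare your argument against in the paper itself.
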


Finally, we consider the Tur\'{a}n number. It is well-known that $ex(n, K_{r+1})=t(n,r)=\left(1-1/r\right)\binom{n}{2}+o(n^2)$. In fact, if $n\equiv p \pmod r$ where $0\leq p\leq r-1$, then $t(n,r)=\left(1-1/r\right)n^2/2+(p-r)p/(2r)$. Thus $\left(1-1/r\right)n^2/2-r/8\leq t(n,r)\leq \left(1-1/r\right)n^2/2$. We will use this more precise bound in our proofs of the main results.

\section{On edges not contained in a rainbow triangle or monochromatic copy of $H$}
\label{sec:ch-proof-RMk3k3}

For the proof of Theorem \ref{th:RMk3k3}, we first define the following variant of the Gallai-Ramsey number. Given a set $V$ and an integer $k \leq |V|$, let $\binom{V}{\leq k}$ (resp., $\binom{V}{k}$) be the set of all nonempty subsets of $V$ of size at most $k$ (resp., size $k$).

\noindent\begin{definition}\label{de:gr*} For a graph $H$ and an integer $k\geq 2$, let $GR^{\ast}_k(H)$ be the minimum integer $n^{\ast}$ such that for every coloring $c: \binom{[n^{\ast}]}{\leq 2} \rightarrow [k]$, at least one of the following statements holds:
\begin{itemize}
\item[{\rm(1$^{\ast}$)}] the restriction of $c$ to $\binom{[n^{\ast}]}{2}$ contains either a rainbow triangle or a monochromatic homomorphic copy of $H$;

\item[{\rm(2$^{\ast}$)}] for some $1\leq i < j\leq n^{\ast}$, we have $c(\{i,j\})= c(\{i\})$ or $c(\{i,j\})= c(\{j\})$.
\end{itemize}
\end{definition}

In other words, $GR^{\ast}_k(H)-1$ is the maximum integer $n^{\ast \ast}$ such that for the complete graph $K_{n^{\ast \ast}}$ with vertex set $[n^{\ast \ast}]$, there exists a coloring $c: \binom{[n^{\ast \ast}]}{\leq 2} \rightarrow [k]$ satisfying
\begin{itemize}
\item[{\rm(1$^{\ast \ast}$)}] the restriction of $c$ to $\binom{[n^{\ast \ast}]}{2}$ is a Gallai-$k$-coloring without a monochromatic homomorphic copy of $H$; and

\item[{\rm(2$^{\ast \ast}$)}] for any $1\leq i < j\leq n^{\ast}$, we have $c(\{i,j\})\neq c(\{i\})$ and $c(\{i,j\})\neq c(\{j\})$.
\end{itemize}

For a set $\mathscr{H}$ of graphs, let $GR_k(\mathscr{H})$ denote the minimum integer $n$ such that every Gallai-$k$-coloring of $K_n$ contains a monochromatic copy of $H$ for some $H\in\mathscr{H}$.

\noindent\begin{lemma}\label{le:gr*bound} For a graph $H$, let $\mathscr{H}$ be the set of all homomorphic copies of $H$. Then
\begin{itemize}
\item[{\rm(1)}] $GR^{\ast}_k(H)\geq GR_{k-1}(\mathscr{H})$,

\item[{\rm(2)}] $f_k(n,H)\geq t(n, GR_{k-1}(\mathscr{H})-1)$,

\item[{\rm(3)}] if there exists a coloring $c$ satisfying conditions {\rm (1$^{\ast \ast}$)} and {\rm (2$^{\ast \ast}$)} such that all elements of $\binom{[GR^{\ast}_k(H)-1]}{1}$ use a single color, then $f_k(n,H)\geq t(n, GR^{\ast}_k(H)-1)$.
\end{itemize}
\end{lemma}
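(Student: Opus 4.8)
The plan is to prove the three parts of Lemma~\ref{le:gr*bound} in order, establishing the relationship between the auxiliary parameter $GR^{\ast}_k(H)$ and the Tur\'{a}n-type lower bounds for $f_k(n,H)$. Throughout, $\mathscr{H}$ denotes the set of all homomorphic copies of $H$, and the key observation is that a monochromatic homomorphic copy of $H$ in $\binom{[n]}{2}$ is precisely a monochromatic copy of some member of $\mathscr{H}$.

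For part (1), the goal is $GR^{\ast}_k(H)\geq GR_{k-1}(\mathscr{H})$, which I would prove by constructing, from an optimal coloring witnessing $GR_{k-1}(\mathscr{H})$, a valid coloring $c$ on $\binom{[n]}{\leq 2}$ with $k$ colors. Let $N:=GR_{k-1}(\mathscr{H})-1$, and fix a Gallai-$(k-1)$-coloring of $K_N$ (using colors $\{1,\ldots,k-1\}$, say) containing no monochromatic copy of any $H'\in\mathscr{H}$. I would then define $c$ on the edges $\binom{[N]}{2}$ to agree with this coloring, and color every singleton $\{i\}\in\binom{[N]}{1}$ with the new color $k$. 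Condition (1$^{\ast\ast}$) holds since the edge-coloring is exactly the chosen Gallai-coloring with no monochromatic homomorphic copy of $H$, and condition (2$^{\ast\ast}$) holds trivially because every singleton receives color $k$ while no edge receives color $k$. This shows $n=N$ is achievable, so $GR^{\ast}_k(H)-1\geq N$, giving the claim.

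For part (2), the goal is $f_k(n,H)\geq t(n, GR_{k-1}(\mathscr{H})-1)$. Here I would take the Gallai-$(k-1)$-coloring of $K_N$ from part (1) with $N=GR_{k-1}(\mathscr{H})-1$ and blow it up: replace each of the $N$ vertices by an independent set, distributing the $n$ vertices as evenly as possible (mirroring the Tur\'{a}n graph $T_N(n)$). Edges inside a blown-up part I would color with a fresh color $k$, while edges between parts inherit the color of the corresponding edge in $K_N$. The between-part edges form a copy of $T_N(n)$, and there are exactly $t(n,N)$ of them; I would argue that none of these edges lies in a rainbow triangle or a monochromatic copy of $H$. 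No rainbow triangle arises because any triangle using a between-part edge either lies entirely among the between-part structure (inheriting the rainbow-triangle-free Gallai-coloring) or uses an inside-part edge of color $k$, and a short case analysis shows two of its edges share a color. No monochromatic $H$ arises on a between-part edge because such a copy would project to a monochromatic homomorphic copy of $H$ in $K_N$, contradicting the choice of coloring. Hence these $t(n,N)$ edges are all valid, proving the bound.

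For part (3), the goal is the stronger bound $f_k(n,H)\geq t(n, GR^{\ast}_k(H)-1)$ under the hypothesis that an optimal coloring $c$ satisfying (1$^{\ast\ast}$) and (2$^{\ast\ast}$) can be chosen so that all singletons in $\binom{[GR^{\ast}_k(H)-1]}{1}$ receive a single common color, say color $c_0$. Writing $M:=GR^{\ast}_k(H)-1$, I would again blow up $K_M$ to $n$ vertices evenly, but now color the edges \emph{inside} each blown-up part with the shared singleton color $c_0$, and keep the between-part edges colored as in $c$ restricted to $\binom{[M]}{2}$. The point of condition (2$^{\ast\ast}$) is exactly that $c_0$ differs from the color of every edge incident to the corresponding vertex in the original coloring, which is what prevents a mixed triangle (one inside-edge plus two between-edges at a vertex) from becoming rainbow and what prevents an inside-edge from extending a monochromatic copy of $H$ living on between-part edges. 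I expect the main obstacle to be this verification in part (3): one must check carefully that using the genuine color $c_0$ (rather than a brand-new $(k\!+\!1)$-th color) on the inside edges still leaves all $t(n,M)$ between-part edges outside every rainbow triangle and every monochromatic $H$, and this is precisely where the single-color hypothesis on the singletons and condition (2$^{\ast\ast}$) are used in tandem. Since (1) gives $M\geq GR_{k-1}(\mathscr{H})-1$, part (3) yields a bound at least as strong as part (2), which is the reason for isolating this favourable case.
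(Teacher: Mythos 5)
Your proposal is correct and follows essentially the same route as the paper's own proof: part (1) colors the singletons of an extremal Gallai-$(k-1)$-coloring with the $k$th color, and parts (2) and (3) blow up the relevant extremal coloring into a Tur\'{a}n-type coloring of $K_n$, with inside edges in the fresh color $k$ (respectively the common singleton color), so that the $t(n,\cdot)$ cross edges avoid rainbow triangles and, via projection to a monochromatic homomorphic copy of $H$, avoid monochromatic copies of $H$. The verification you flag as the main obstacle in part (3) is exactly the one the paper also dispatches briefly, the key point being that condition (2$^{\ast\ast}$) together with the single-color hypothesis guarantees that the singleton color appears on no cross edge, so a monochromatic copy of $H$ through a cross edge could use no inside edge.
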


\begin{proof}
Let $n^{\ast}_k:=GR_{k-1}(\mathscr{H})$. We first prove (1). Let $F$ be a Gallai-$(k-1)$-coloring of $K_{n^{\ast}_k-1}$ without a monochromatic copy of $H'$ for any $H'\in\mathscr{H}$. We color the vertices of $F$ with the $k$th color and then we obtain a $k$-coloring of $\binom{[n^{\ast}_k-1]}{\leq 2}$ satisfying conditions (1$^{\ast \ast}$) and (2$^{\ast \ast}$). Thus $GR^{\ast}_k(H)\geq n^{\ast}_k= GR_{k-1}(\mathscr{H})$.

Next, we give the proof of (2). Let $G$ be a Gallai-$(k-1)$-coloring of $K_{n^{\ast}_k-1}$ without a monochromatic copy of $H'$ for any $H'\in\mathscr{H}$. Let $V(G)=\{1, 2, \ldots, n^{\ast}_k-1\}$ and let $G'$ be the Tur\'{a}n graph $T_{n^{\ast}_k-1}(n)$
with parts $V_1, \ldots, V_{n^{\ast}_k-1}$. We color the edges of $G'$ such that for any $1\leq i<j\leq n^{\ast}_k-1$, we have $c_{G'}(V_i, V_j)=c_{G}(ij)$. Let $G''$ be a $k$-edge-coloring of $K_{n}$ obtained by coloring the edges within each part using color $k$ from the above $(k-1)$-edge-coloring of $G'$. We claim that all the edges between the $n^{\ast}_k-1$ parts are neither contained in a rainbow copy of $K_3$ nor in a monochromatic copy of $H$ in $G''$. Indeed, note that there is no rainbow copy of $K_3$ using color $k$. Thus if $G''$ contains a rainbow copy of $K_3$, then $G$ is not a Gallai-coloring, a contradiction. If there is an edge $e$ between these $n^{\ast}_k-1$ parts such that $e$ is contained in a monochromatic copy of $H$, then $G$ contains a monochromatic homomorphic copy of $H$, a contradiction. Thus $f_{k}(n, H)\geq \left|E\left(G'\right)\right|=t(n, n^{\ast}_k-1)$.

Finally, we prove (3). Let $n_k:=GR^{\ast}_k(H)-1$. Let $c$ be a coloring as in the statement of the lemma, and we may assume that all elements of $\binom{[n_k]}{1}$ are colored by color 1. Note that the restriction of $c$ to $\binom{[n_k]}{2}$ is a Gallai-$(k-1)$-coloring without a monochromatic homomorphic copy of $H$. Let $W$ be the Tur\'{a}n graph $T_{n_k}(n)$ with parts $V_1, \ldots, V_{n_k}$. We color the edges of $W$ such that $c_{W}(V_i, V_j)=c(\{i,j\})$ for any $1\leq i<j\leq n_k$. Let $W'$ be a $k$-edge-coloring of $K_{n}$ obtained by coloring the edges within each part using color $1$ from the above $(k-1)$-edge-coloring of $W$. It is easy to check that all the edges between the $n_k$ parts are neither contained in a rainbow copy of $K_3$ nor in a monochromatic copy of $H$ in $W'$. Thus $f_{k}(n, H)\geq \left|E\left(W\right)\right|=t(n, n_k)$.
\end{proof}

Note that we have $GR^{\ast}_k(H)= GR_{k-1}(\mathscr{H})=2$ whenever $H$ is a bipartite graph, where $\mathscr{H}$ is the set of all homomorphic copies of $H$. A natural question is  for which non-bipartite graph $H$ it holds that $GR^{\ast}_k(H)= GR_{k-1}(\mathscr{H})$? We can verify that $K_3$ is such a graph.

\noindent\begin{lemma}\label{le:gr*} Let $\mathscr{H}(K_3)$ be the set of all homomorphic copies of $K_3$. For integers $k\geq 2$, we have $GR^{\ast}_k(K_3) = GR_{k-1}(\mathscr{H}(K_3)) = GR_{k-1}(K_3)$.
\end{lemma}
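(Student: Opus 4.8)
The plan is to prove the chain of equalities $GR^{\ast}_k(K_3) = GR_{k-1}(\mathscr{H}(K_3)) = GR_{k-1}(K_3)$ by first simplifying the rightmost term and then establishing the matching upper bound for the leftmost. The second equality should be the easy part: a homomorphic copy of $K_3$ is precisely a graph admitting a homomorphism from $K_3$, which (as noted in the excerpt, since $\chi(K_3)=3$) means a monochromatic subgraph whose color class contains an odd cycle, equivalently a non-bipartite monochromatic subgraph. But any monochromatic odd cycle, being a connected non-bipartite graph, actually forces a monochromatic genuine triangle is \emph{not} automatic—so I would instead argue directly that asking for a monochromatic homomorphic copy of $K_3$ is the same as asking for a monochromatic \emph{odd cycle}, and then show that in a Gallai-coloring the smallest monochromatic odd cycle already yields a monochromatic $K_3$. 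The cleanest route is to observe that a color class containing an odd cycle but no triangle would be triangle-free and non-bipartite; I would rule this out using the Gallai partition structure (Theorem~\ref{th:Gallai}) together with the fact that within a single color class the coloring is very restricted. Concretely, if some color class is non-bipartite, I want to find three vertices spanning a monochromatic triangle in that color, which gives $GR_{k-1}(\mathscr{H}(K_3)) = GR_{k-1}(K_3)$.

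For the first equality, part~(1) of Lemma~\ref{le:gr*bound} already gives $GR^{\ast}_k(K_3) \geq GR_{k-1}(\mathscr{H}(K_3))$, so I only need the reverse inequality $GR^{\ast}_k(K_3) \leq GR_{k-1}(K_3)$. The approach is to take any coloring $c: \binom{[n]}{\leq 2} \to [k]$ on $n = GR_{k-1}(K_3)$ vertices and show that condition (1$^{\ast}$) or (2$^{\ast}$) of Definition~\ref{de:gr*} must hold. Suppose for contradiction that neither holds: then the restriction of $c$ to the edges is a Gallai-$k$-coloring (no rainbow triangle) with no monochromatic homomorphic copy of $K_3$ (so every color class is bipartite), and moreover every vertex-color $c(\{i\})$ differs from both edge-colors at its incident edges. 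The key point is that condition (2$^{\ast}$) failing means the vertex colors behave like a proper coloring relative to incident edges, which will let me promote the $k$ edge-colors plus vertex-colors into an effective $(k-1)$-coloring situation, or equivalently extract a genuine Gallai-$(k-1)$-coloring of $K_n$ with no monochromatic triangle—contradicting $n = GR_{k-1}(K_3)$.

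The main obstacle I anticipate is making the reduction from a $k$-coloring-with-vertex-labels satisfying (2$^{\ast \ast}$) down to an honest Gallai-$(k-1)$-coloring precise. The intuition is that the vertex colors act as "forbidden" colors that effectively remove one degree of freedom, so that the edge-coloring really only uses the equivalent of $k-1$ colors in a Gallai-compatible way; but formalizing this requires care because different vertices may carry different vertex-colors, so the removed color is not globally uniform. I expect to handle this by invoking the Gallai partition of the edge-colored $K_n$: the reduced coloring between parts uses at most two colors, and I would track how the vertex-color constraint interacts with this at most-two-color base coloring, arguing that the constraint forces the whole configuration to be realizable with one fewer color. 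Since each color class is bipartite, I can two-color (by the bipartition classes) within each color and combine this with the Gallai structure to derive that no monochromatic triangle avoidance is possible once $n$ reaches $GR_{k-1}(K_3)$, yielding the contradiction. Once both inequalities are in hand, the chain of equalities follows immediately.
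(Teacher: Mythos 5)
Your plan for the second equality rests on a misreading of the paper's definition of homomorphic copy. Here $G$ is a homomorphic copy of $K_3$ if there is a map $\varphi\colon V(K_3)\rightarrow V(G)$ sending edges to edges; since the three vertices of $K_3$ are pairwise adjacent and $G$ is simple (no loops), their images are three \emph{distinct}, pairwise adjacent vertices, so every homomorphic copy of $K_3$ contains a genuine triangle. Hence $GR_{k-1}(\mathscr{H}(K_3))=GR_{k-1}(K_3)$ is a one-line observation (exactly what the paper does, combined with Lemma \ref{le:gr*bound} (1) for the lower bound on $GR^{\ast}_k(K_3)$). Your ``monochromatic odd cycle / non-bipartite color class'' reading is not equivalent, and the patch you propose is false: in the $2$-coloring of $K_5$ whose two color classes are complementary $5$-cycles --- a Gallai-coloring, and the basic building block of this paper's extremal constructions --- each color class is a monochromatic odd cycle, yet there is no monochromatic triangle. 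Consequently the assertion ``every color class is bipartite,'' which you later rely on to two-color within each color class, is simply unavailable: the correct hypothesis only gives triangle-free color classes.

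The more serious gap is in the upper bound $GR^{\ast}_k(K_3)\leq GR_{k-1}(K_3)$. You correctly identify the obstacle (the vertex-forbidden colors vary from vertex to vertex) but never overcome it: ``arguing that the constraint forces the whole configuration to be realizable with one fewer color'' is precisely the content of the lemma, and a one-shot extraction of a Gallai-$(k-1)$-coloring of all of $K_n$ is dubious --- the paper exhibits (Figure \ref{fig:figure1}) an extremal coloring for $k=4$ whose singletons carry two different colors and whose edge-coloring uses all $k$ colors. The paper's actual mechanism is an induction on $k$ that \emph{localizes} the forbidden colors to the parts of a Gallai partition: take a partition $V_1,\ldots,V_m$ with $m$ minimum; avoiding a monochromatic triangle forces $m\leq 5$, and $m=3$ contradicts minimality. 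If $m\in\{4,5\}$, the triangle-free reduced $2$-coloring of $K_4$ or $K_5$ has every vertex incident to both inter-part colors, so every vertex of every part sees both colors on outgoing edges; then condition (2$^{\ast\ast}$) bans both colors from all singletons, and triangle-freeness bans them from edges inside each part, so each $V_i$ carries a legal configuration with $k-2$ colors, giving $n\leq 5\left(GR^{\ast}_{k-2}(K_3)-1\right)\leq 5\left(GR_{k-3}(K_3)-1\right)\leq GR_{k-1}(K_3)-1$ by induction (with $k=3$ handled directly); if $m=2$, the single inter-part color is banned inside both parts, giving $n\leq 2\left(GR^{\ast}_{k-1}(K_3)-1\right)\leq 2\left(GR_{k-2}(K_3)-1\right)\leq GR_{k-1}(K_3)-1$. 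This part-by-part elimination of the two base colors, together with the induction on $k$ (and the base case $GR^{\ast}_2(K_3)=3$), is the missing idea; without it, or some equivalent inductive mechanism, your sketch does not close.
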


\begin{proof}
For every graph $H'\in \mathscr{H}(K_3)$, we have that $H'$ contains $K_3$ as a subgraph by the definition. Thus $GR_{k-1}(\mathscr{H}(K_3))\geq GR_{k-1}(K_3)$. By Lemma \ref{le:gr*bound} (1), we have $GR^{\ast}_k(K_3) \geq GR_{k-1}(\mathscr{H}(K_3)) \geq GR_{k-1}(K_3)$.

For $k\geq 2$, let $n^{\ast}_k:=GR_{k-1}(K_3)$, and we will prove $GR^{\ast}_k(K_3)\leq n^{\ast}_k$ by induction on $k$.
When $k=2$, we have $GR^{\ast}_2(K_3)=3=n^{\ast}_2$ clearly. Suppose that for all $2\leq k'\leq k-1$, we have $GR^{\ast}_{k'}(K_3)\leq n^{\ast}_{k'}$. We will prove it for $k'=k$. Let $n$ be the maximum integer such that there is a coloring $c: \binom{[n]}{\leq 2} \rightarrow [k]$ satisfying conditions (1$^{\ast \ast}$) and (2$^{\ast \ast}$). It suffices to show that $n\leq n^{\ast}_k-1$. By Theorem \ref{th:Gallai}, there is a Gallai partition $V_1, V_2, \ldots, V_m$ ($m\geq 2$) of $[n]$. Note that $K_3\in \mathscr{H}(K_3)$. For avoiding a monochromatic copy of $K_3$, we have $m\leq 5$. We choose such a partition so that $m$ is minimum. Let $R$ be an edge-coloring of a complete graph with $V(R)=\{v_1, v_2, \ldots, v_m\}$ and $c(v_iv_j)=c(V_i, V_j)$ for any $i\neq j$. If $m=5$ (resp., $m=4$), then $R$ is the unique 2-edge-coloring of $K_5$ without a monochromatic copy of $K_3$, i.e., each color forms a cycle of length 5 (resp., $R$ is one of the two 2-edge-colorings of $K_4$ without a monochromatic copy of $K_3$, i.e., each color forms a path of length 3, or one color forms a cycle of length 4 and the other color forms a matching with two edges). Then there is no edge using color 1 or 2 within each part $V_i$ for avoiding a monochromatic copy of $K_3$, and there is no vertex using color 1 or 2 within each part $V_i$ by condition (2$^{\ast \ast}$). Thus if $k=3$, then $n\leq 5=n^{\ast}_3-1$, and if $k\geq 4$, then $n\leq 5(GR^{\ast}_{k-2}(K_3)-1)\leq 5(GR_{k-3}(K_3)-1)\leq n^{\ast}_k-1$ by the induction hypothesis. If $m=3$, then at least two of the colors $c(V_1, V_2)$, $c(V_1, V_3)$ and $c(V_2, V_3)$ are the same color, say $c(V_1, V_2)=c(V_1, V_3)$. This implies that $V_1$ and $V_2\cup V_3$ form a Gallai partition with exactly two parts, contradicting the minimality of $m$. If $m=2$, then we may assume $c(V_1, V_2)=1$. Then color 1 cannot be used on $\binom{V_1}{\leq 2}$ and $\binom{V_2}{\leq 2}$. Thus $n\leq 2(GR^{\ast}_{k-1}(K_3)-1)\leq2(GR_{k-2}(K_3)-1)\leq n^{\ast}_k-1$ by the induction hypothesis.
\end{proof}

By Lemma \ref{le:gr*}, we have $GR^{\ast}_k(K_3) = GR_{k-1}(\mathscr{H}(K_3))$. As in the proof of Lemma \ref{le:gr*bound} (1), we can construct an extremal coloring $\binom{[GR^{\ast}_k(K_3)-1]}{\leq 2} \rightarrow [k]$ satisfying conditions (1$^{\ast \ast}$) and (2$^{\ast \ast}$) in which we assign a single color to all elements of $\binom{[GR^{\ast}_k(K_3)-1]}{1}$. It is worth noticing that not all the extremal colorings assign a single color to all singletons. For example, Figure \ref{fig:figure1} gives an extremal coloring of $GR^{\ast}_4(K_3)$ with two colors on singletons.

\begin{figure}[htbp]
\begin{center}
\begin{tikzpicture}[scale=0.06,auto,swap]
\tikzstyle{redvertex}=[circle,draw=red,fill=red]
\tikzstyle{bluevertex}=[circle,draw=blue,fill=blue]
 \node[redvertex,scale=0.5]  (v11) at (28.71,-3.57) {};  \node[bluevertex,scale=0.5]  (v12) at (42.4,-3.57) {};
 \node[redvertex,scale=0.5]  (v21) at (6.04,-20.85) {};  \node[redvertex,scale=0.5]  (v22) at (2.02,-33.21) {};
 \node[bluevertex,scale=0.5]  (v31) at (11.42,-60.22) {};  \node[bluevertex,scale=0.5]  (v32) at (21.94,-67.86) {};
 \node[bluevertex,scale=0.5]  (v41) at (50.52,-67.86) {};  \node[bluevertex,scale=0.5]  (v42) at (61.04,-60.22) {};
 \node[redvertex,scale=0.5]  (v51) at (69.3,-33.21) {};  \node[redvertex,scale=0.5]  (v52) at (65.29,-20.85) {};

 \draw [black,thick] (v11) -- (v12);
 \draw [green,thick] (v21) -- (v22);
 \draw [green,thick] (v31) -- (v32);
 \draw [red,thick] (v41) -- (v42);
 \draw [blue,thick]  (v51) -- (v52);

 \draw [blue,thick] (v11) -- (v21); \draw [blue,thick] (v11) -- (v22); \draw [black,thick] (v12) -- (v21); \draw [black,thick] (v12) -- (v22);

 \draw [black,thick] (v21) -- (v31); \draw [black,thick] (v21) -- (v32); \draw [black,thick] (v22) -- (v31); \draw [black,thick] (v22) -- (v32);

 \draw [green,thick] (v31) -- (v41); \draw [red,thick] (v31) -- (v42); \draw [red,thick] (v32) -- (v41); \draw [green,thick] (v32) -- (v42);

 \draw [black,thick] (v41) -- (v51); \draw [black,thick] (v41) -- (v52); \draw [black,thick] (v42) -- (v51); \draw [black,thick] (v42) -- (v52);

 \draw [green,thick] (v51) -- (v11); \draw [black,thick] (v51) -- (v12); \draw [green,thick] (v52) -- (v11); \draw [black,thick] (v52) -- (v12);

 \draw [black,thick] (v21) -- (v41); \draw [black,thick] (v21) -- (v42); \draw [black,thick] (v22) -- (v41); \draw [black,thick] (v22) -- (v42);

 \draw [black,thick] (v31) -- (v51); \draw [black,thick] (v31) -- (v52); \draw [black,thick] (v32) -- (v51); \draw [black,thick] (v32) -- (v52);

 \draw [blue,thick] (v21) -- (v51); \draw [green,thick] (v21) -- (v52); \draw [green,thick] (v22) -- (v51); \draw [blue,thick] (v22) -- (v52);

 \draw [black,thick] (v11) -- (v31); \draw [black,thick] (v11) -- (v32); \draw [red,thick] (v12) -- (v31); \draw [red,thick] (v12) -- (v32);

 \draw [black,thick] (v11) -- (v41); \draw [black,thick] (v11) -- (v42); \draw [green,thick] (v12) -- (v41); \draw [green,thick] (v12) -- (v42);
\end{tikzpicture}
\caption{An extremal coloring of $GR^{\ast}_4(K_3)$ with two colors on singletons.}
\label{fig:figure1}
\end{center}
\end{figure}
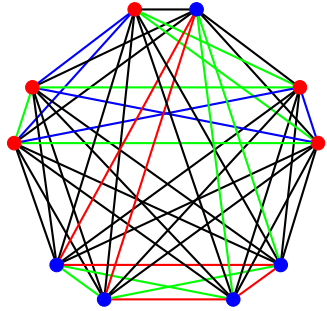

Now we have all the ingredients for our proof of Theorem \ref{th:RMk3k3}.

\begin{proof}[Proof of Theorem \ref{th:RMk3k3}]
The lower bound follows from Lemmas \ref{le:gr*bound} (2) and \ref{le:gr*}. Next, we will prove that $f_{k}(n, K_3)< t(n, GR_{k-1}(K_3)-1)+\delta n^2$. Let nim$_k(n, K_3)$ be the maximum number of edges not contained in any monochromatic copy of $K_3$ over all $k$-edge-colorings of $K_n$. Note that $f_{k}(n, K_3)$ $\leq \mbox{nim}_k(n,$ $K_3)$. For sufficiently large $n$, since nim$_2(n, K_3)=t(n, 2)$ (proven in \cite{KeSu}) and nim$_3(n, K_3)=t(n, 5)$ (proven in \cite{LiPS}), we have $f_{k}(n, K_3)= t(n, GR_{k-1}(K_3)-1)$ for $k\in\{2,3\}$. In the following, we may assume $k\geq 4$.

Let $N_k:=GR_{k-1}(K_3)$. We choose $d$ such that $d\leq \delta /k$. Let $\varepsilon_1=\varepsilon_1(k, d/2, K_3)$ and $n_1=n_1(k, d/2, K_3)$ (resp., $\varepsilon_2=\varepsilon_2(k, d, K_3)$ and $n_2=n_2(k, d, K_3)$) be the values obtained by applying Lemma \ref{le:embedding}. Let $n'_1$ and $M_1$ be the values obtained by applying Lemma \ref{le:regularity} with $\varepsilon_1$ and $1/ \varepsilon_1$. Then we choose $\varepsilon$ such that $\varepsilon\leq \min\left\{\delta/4, \varepsilon_1/M_1, \varepsilon_2, d/2\right\}$. Let $n'$ and $M$ be the values obtained by applying Lemma \ref{le:regularity} with $\varepsilon$ and $1/\varepsilon$. Let $n_0= \max\left\{n', n'_1M, \sqrt{(N_k-1)/(2\delta)}, MM_1n_1/3, n_2\right\}$ and $n\geq n_0$.

Let $F$ be a $k$-edge-coloring of $K_n$, and $F'$ be the spanning subgraph of $F$ with $E(F')=\{e\in E(F)\colon\, e$ is not contained in any rainbow or monochromatic copy of $K_3\}$. For a contradiction, suppose $|E(F')|\geq t(n, N_k-1)+\delta n^2$. Let $V_1, V_2, \ldots, V_t$ be a partition of $V(F')$ obtained by applying Lemma \ref{le:regularity} to $F'$ with $\varepsilon$ and $1/ \varepsilon$, where $1/ \varepsilon \leq t \leq M$. Let $R=R(d)$ be the reduced graph. Since there are at most $\binom{n/t}{2}$ edges within a part, at most $\left(n/t\right)^2$ edges between any two parts, and less than $kd\left(n/t\right)^2$ edges between a pair of parts with density less than $d$ for each color,
we have
\begin{align*}
 |E(R)| > & \ \frac{t(n,N_k-1)+\delta n^2- t\binom{\frac{n}{t}}{2}- \varepsilon\binom{t}{2}\left(\frac{n}{t}\right)^2- kd\left(\frac{n}{t}\right)^2\binom{t}{2}}{\left(\frac{n}{t}\right)^2}\\
   > & \ \frac{t^2\left(\left(1-\frac{1}{N_k-1}\right)\frac{n^2}{2}-\frac{N_k-1}{8}+\delta n^2- \left(\frac{1}{t}+\varepsilon+kd\right)\frac{n^2}{2}\right)}{n^2}\\
   = & \ \left(1-\frac{1}{N_k-1}+2\delta-\frac{N_k-1}{4n^2}-\frac{1}{t}-\varepsilon-kd\right) \frac{t^2}{2}\\
   \geq & \ \left(1-\frac{1}{N_k-1}\right) \frac{t^2}{2},
\end{align*}
where the last inequality is by the choices of $n$, $d$ and $\varepsilon$. Thus $|E(R)| \geq t(t, N_k-1)+1$, so $R$ contains a copy $R'$ of $K_{N_k}$. Without loss of generality, let $V(R')=\{1, 2, \ldots, N_k\}$. Then for any $1\leq i<j\leq N_k$, we have that $(V_i, V_j)$ is $\varepsilon$-regular for each color, and there exists a color $c_{ij}$ with density at least $d$ in $E(V_i, V_j)$.

For each $i\in [N_k]$, we have $|V_i|=n/t\geq (n'_1M)/M=n'_1$. Thus we can apply Lemma \ref{le:regularity} with $\varepsilon_1$ and $1/ \varepsilon_1$ to $F[V_i]$ (note that here we consider $F[V_i]$, not only $F'[V_i]$). Then there exist two subsets $V_{i,1}, V_{i,2}\subseteq V_i$ with $|V_{i,1}|=|V_{i,2}|\geq n'_1/M_1$ such that $(V_{i,1}, V_{i,2})$ is an $(\varepsilon_1, 1/k)$-regular pair for some color $c_i\in [k]$. From the choice of $d$, we have $1/k\geq d/2$, so $(V_{i,1}, V_{i,2})$ is an $(\varepsilon_1, d/2)$-regular pair for color $c_i$. We define a coloring $\varphi: \binom{V(R')}{\leq 2}\rightarrow [k]$ such that $\varphi(\{i\})=c_i$ and $\varphi(\{i, j\})=c_{ij}$. Note that there might be more than one choice for $\varphi(\{i\})$ and $\varphi(\{i, j\})$, and we may choose an arbitrary one from these choices. By Lemma \ref{le:gr*}, we have $|V(R')|=N_k=GR_{k-1}(K_3)=GR^{\ast}_k(K_3)$. Thus at least one of the following statements holds:
\begin{itemize}
  \item[{\rm (1)}] $R'$ contains a rainbow copy of $K_3$;

  \item[{\rm (2)}] $R'$ contains a monochromatic homomorphic copy of $K_3$;

  \item[{\rm (3)}] $\varphi(\{i, j\})=\varphi(\{i\})$ for some $1\leq i\neq j\leq N_k$.
\end{itemize}

If (1) or (2) holds, then there is a rainbow or monochromatic copy of $K_3$ in $F'$ by Lemma~\ref{le:embedding}, a contradiction. If (3) holds, then by applying Lemma \ref{le:slicing} with $\alpha=1/M_1$, we have that $(V_{j}, V_{i,1})$ and $(V_{j}, V_{i,2})$ are two $(\varepsilon M_1, d-\varepsilon)$-regular (and thus $(\varepsilon_1, d/2)$-regular) pairs for color $c_i$. Thus $(V_{i,1}, V_{i,2})$, $(V_{j}, V_{i,1})$ and $(V_{j}, V_{i,2})$ are three $(\varepsilon_1, d/2)$-regular pairs for color $c_i$. By Lemma \ref{le:embedding}, there is a monochromatic copy of $K_3$ which contains two edges of $F'$, a contradiction.
\end{proof}

By similar arguments as in the proof of Theorem \ref{th:RMk3k3}, we can prove the following result for a general graph $H$. We omit the details.

\noindent\begin{theorem}\label{th:RMk3H} For any $\delta>0$, there exists an $n_0$ such that for all $n\geq n_0$ and any graph $H$, we have $t\left(n, GR_{k-1}(\mathscr{H})- 1\right)\leq f_{k}(n, H)<t\left(n, GR^{\ast}_{k}(H)-1\right)+\delta n^2$, where $\mathscr{H}$ is the set of all homomorphic copies of $H$.
\end{theorem}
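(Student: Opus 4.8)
The plan is to follow the proof of Theorem~\ref{th:RMk3k3} almost verbatim, replacing the role of $GR_{k-1}(K_3)=GR^{\ast}_k(K_3)$ by $GR^{\ast}_k(H)$ and the role of ``monochromatic triangle'' by ``monochromatic copy of $H$'' throughout. The lower bound $t(n,GR_{k-1}(\mathscr{H})-1)\le f_k(n,H)$ needs no new work at all: it is exactly Lemma~\ref{le:gr*bound}(2) applied to the family $\mathscr{H}$ of homomorphic copies of $H$. For the upper bound I would again argue by contradiction. Fix $d\le\delta/k$, run the Multicolor Embedding Lemma (Lemma~\ref{le:embedding}) and the Regularity Lemma (Lemma~\ref{le:regularity}) to obtain the nested parameters $\varepsilon_1,\varepsilon_2,\varepsilon$ and the associated thresholds exactly as in the proof of Theorem~\ref{th:RMk3k3}, and let $F'$ be the spanning subgraph of $F$ consisting of the edges lying in no rainbow triangle and in no monochromatic copy of $H$. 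Assuming $|E(F')|\ge t(n,GR^{\ast}_k(H)-1)+\delta n^2$, the very same edge count (discarding within-part edges, the $\le\varepsilon\binom{t}{2}$ irregular pairs, and the low-density colour classes) forces the reduced graph $R=R(d)$ of $F'$ to satisfy $|E(R)|\ge t(t,GR^{\ast}_k(H)-1)+1$, so by Tur\'an's theorem $R$ contains a clique $R'$ on $N:=GR^{\ast}_k(H)$ parts, with a dense colour $c_{ij}$ on each pair $(V_i,V_j)$.

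The one place where the generalisation is genuinely more than cosmetic is the construction of the singleton colours $c_i$. For each $i\in V(R')$ I would apply Lemma~\ref{le:regularity} to the \emph{full} graph $F[V_i]$, but now with a number of sub-parts at least $R_k(K_{\chi(H)-1})$; since $F[V_i]$ is complete, every regular pair of sub-parts is dense in some colour, so the (almost complete) $k$-coloured within-part reduced graph contains, by the multicolour Ramsey theorem, a monochromatic $K_{\chi(H)-1}$ in some colour $c_i$. In other words, inside $V_i$ I extract $\chi(H)-1$ sub-parts that are pairwise $(\varepsilon_1,d/2)$-regular in the single colour $c_i$ (for $H=K_3$, where $\chi(H)-1=2$, this is precisely the single dense pair used in Theorem~\ref{th:RMk3k3}). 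Setting $\varphi(\{i\})=c_i$ and $\varphi(\{i,j\})=c_{ij}$ yields a colouring of $\binom{V(R')}{\le 2}$, and since $|V(R')|=GR^{\ast}_k(H)$, Definition~\ref{de:gr*} guarantees that either (1$^{\ast}$) the pair-colouring of $R'$ contains a rainbow triangle or a monochromatic homomorphic copy of $H$, or (2$^{\ast}$) $c_{ij}=c_i$ (or $c_{ij}=c_j$) for some $i\ne j$. In the two sub-cases of (1$^{\ast}$) the Embedding Lemma transfers the rainbow triangle, respectively the monochromatic homomorphic copy of $H$, into $F$ along between-part (hence $F'$-)edges, contradicting the definition of $F'$.

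The main obstacle is case (2$^{\ast}$), i.e.\ $c_{ij}=c_i$, and this is exactly where the strengthened within-part extraction pays off. Having $\chi(H)-1$ sub-parts of $V_i$ pairwise $c_i$-regular, together with $V_j$, which is $c_i$-dense to $V_i$ because $c_{ij}=c_i$, I would invoke the Slicing Lemma (Lemma~\ref{le:slicing}) to make all $\binom{\chi(H)}{2}$ pairs among these $\chi(H)$ sets $(\varepsilon_1,d/2)$-regular in colour $c_i$. This realises a monochromatic $K_{\chi(H)}$ in the corresponding reduced graph, and $K_{\chi(H)}$ is a homomorphic copy of $H$; hence the Embedding Lemma produces a monochromatic copy of $H$ inside $F$. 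Choosing the underlying homomorphism $H\to K_{\chi(H)}$ so that the class mapped to $V_j$ carries an incident edge (possible since $H$ has an edge), the embedded copy uses a between-part edge joining $V_i$ and $V_j$, which is an edge of $F'$ — again a contradiction. The delicate points to check are that the within-part regularity can be run with enough sub-parts to force the monochromatic $K_{\chi(H)-1}$ (this only costs a smaller $\varepsilon_1$ and a larger $n_0$, after cleaning away the few irregular pairs before applying Ramsey) and that the Slicing Lemma preserves $(\varepsilon_1,d/2)$-regularity simultaneously on all relevant pairs; both are routine once the parameters are fixed in the order $d$, then $\varepsilon_1,\varepsilon_2$, then $\varepsilon$, then $n_0$, which is why the paper only records that the argument is ``similar''.
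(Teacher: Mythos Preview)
Your proposal is correct and is precisely the ``similar argument'' the paper alludes to: the lower bound is literally Lemma~\ref{le:gr*bound}(2), and the upper bound reruns the regularity/embedding machinery of Theorem~\ref{th:RMk3k3} with $GR^{\ast}_k(H)$ in place of $GR_{k-1}(K_3)$. You have correctly isolated the one step that is not purely cosmetic---in case~(2$^{\ast}$) one must manufacture a monochromatic $K_{\chi(H)}$ (rather than a $K_3$) in the auxiliary reduced graph, which requires extracting $\chi(H)-1$ pairwise $c_i$-dense sub-parts inside $V_i$ via Ramsey on the inner reduced graph before adjoining $V_j$---and your treatment of it (including the observation that in any proper $\chi(H)$-colouring every class meets an edge, so the embedded $H$ necessarily uses an $F'$-edge across $(V_i,V_j)$) is sound.
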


\section{The Ramsey multiplicity problem for Gallai-colorings}
\label{sec:ch-proof-multiplicity}

We first prove the upper bound in Theorem \ref{th:multiplicity1}, by construction. Let $G_2$ be a 2-edge-colored $K_5$ using colors $1$ and $2$ which contains no monochromatic copy of $K_3$, i.e., colors $1$ and $2$ induce two monochromatic copies of $C_5$. Suppose that $2i < k-2$ and we have constructed a Gallai-$2i$-coloring $G_{2i}$ of $K_{n_{2i}}$ without a monochromatic copy of $K_3$, where $n_{2i}:=5^{i}$. Let $G'$ be a 2-edge-colored $K_5$ using colors $2i+1$ and $2i+2$ which contains no monochromatic copy of $K_3$. Let $G_{2i+2}=G'(5\cdot G_{2i})$, i.e., $G_{2i+2}$ is a blow-up of $G'$. This way, when $k$ is odd (resp., $k$ is even), we obtain a Gallai-$(k-1)$-coloring $G_{k-1}$ of $K_{n_{k-1}}$ (resp., Gallai-$(k-2)$-coloring $G_{k-2}$ of $K_{n_{k-2}}$) without a monochromatic copy of $K_3$, where $n_{k-1}=5^{(k-1)/2}$ (resp., $n_{k-2}=5^{(k-2)/2}$). In the following, we will construct a Gallai-$k$-coloring $G_k$ from $G_{k-1}$ or $G_{k-2}$.

If $k$ is odd, then let $A$ be a monochromatic copy of $K_{m}$ using color $k$, and let $B$ be a monochromatic copy of $K_{m+1}$ using color $k$. Let $G_k=G_{k-1}(r\cdot B, (5^{(k-1)/2}-r) \cdot A)$. Then $G_k$ is a Gallai-$k$-coloring of $K_n$ with $r\binom{m+1}{3}+\left(5^{(k-1)/2}-r\right)\binom{m}{3}$ monochromatic copies of $K_3$ (here we define $\binom{1}{3}=\binom{2}{3}=0$ for the sake of notation). If $k$ is even, then let $C$ be a 2-edge-coloring (using colors $k-1$ and $k$) of $K_{m}$ with $M_2(K_3, m)$ monochromatic copies of $K_3$, and let $D$ be a 2-edge-coloring (using colors $k-1$ and $k$) of $K_{m+1}$ with $M_2(K_3, m+1)$ monochromatic copies of $K_3$. Let $G_k=G_{k-2}(r\cdot D, (5^{(k-2)/2}-r) \cdot C)$. Then $G_k$ is a Gallai-$k$-coloring of $K_n$ with $rM_2(K_3, m+1)+\left(5^{(k-2)/2}-r\right)M_2(K_3, m)$ monochromatic copies of $K_3$. This completes the proof for the upper bound in Theorem~\ref{th:multiplicity1}.

It is worth noting that no matter whether $k$ is odd or even, the above extremal coloring is a blow-up of a complete graph of order $5^{\lfloor(k-1)/2\rfloor}$ with a special edge-coloring. Recall that we have $g_{3}(K_3,n)=r\binom{m+1}{3}+(5-r)\binom{m}{3}$. An interesting fact is that the above sharpness example for $k=3$ is the unique Gallai-3-coloring of $K_n$ achieving the minimum number of monochromatic copies of $K_3$, which can be derived from a result of \cite{CKPSTY}. But when $k$ is an even number, the extremal colorings achieving the upper bound are not unique. For example, let $F$ be a 2-edge-coloring (using colors $k-1$ and $k$) of $K_{m+2}$ with $M_2(K_3, m+2)$ monochromatic copies of $K_3$. Since $M_2(K_3, m)+M_2(K_3, m+2)=2M_2(K_3, m+1)$ for any odd number $m$ by Theorem \ref{th:M2}, we can also construct $G_{k}$ such that $G_k=G_{k-2}(1 \cdot F, (r-2)\cdot D, (5^{(k-2)/2}-r+1) \cdot C)$. However, it is still a blow-up of a complete graph of order $5^{\lfloor(k-1)/2\rfloor}$ with a special edge-coloring.

Before presenting our proof for the lower bound in Theorem \ref{th:multiplicity1}, we first provide the exact value of $g_{k}(K_3,GR_k(K_3))$.

\noindent\begin{theorem}\label{th:multiplicity2}
$g_{k}(K_3,GR_k(K_3))=1$ if $k$ is odd, and $g_{k}(K_3,GR_k(K_3))=2$ if $k$ is even.
\end{theorem}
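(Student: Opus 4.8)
The plan is to prove the matching upper and lower bounds separately, the even-$k$ lower bound being the only substantial part. For the \emph{upper bound} I would simply evaluate the constructions used for the upper bound in Theorem~\ref{th:multiplicity1} at the endpoint $n=GR_k(K_3)$. When $k$ is odd, $GR_k(K_3)=2\cdot 5^{(k-1)/2}+1$ gives $m=2$, $r=1$, so the construction $G_{k-1}(1\cdot B,(5^{(k-1)/2}-1)\cdot A)$ has $\binom{3}{3}=1$ monochromatic triangle; when $k$ is even, $GR_k(K_3)=5^{k/2}+1$ gives $m=5$, $r=1$, so the construction has $M_2(K_3,6)+(5^{(k-2)/2}-1)M_2(K_3,5)=2+0=2$ monochromatic triangles. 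Thus $g_k(K_3,GR_k(K_3))\le 1$ for odd $k$ and $\le 2$ for even $k$. For the \emph{lower bound when $k$ is odd}, nothing beyond the definition is needed: $GR_k(K_3)$ is exactly the threshold, so every Gallai-$k$-coloring of $K_{GR_k(K_3)}$ contains a monochromatic triangle, giving $g_k(K_3,GR_k(K_3))\ge 1$ and hence equality.

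The heart of the matter is the \emph{lower bound for even $k$}: every Gallai-$k$-coloring of $K_n$ with $n=5^{k/2}+1$ has at least two monochromatic triangles. I would argue by induction on even $k$, the base case $k=2$ being Goodman's theorem (Theorem~\ref{th:M2}), since every $2$-coloring of $K_6$ is a Gallai-coloring and $M_2(K_3,6)=2$. For the inductive step, fix a Gallai-$k$-coloring $c$ and apply Theorem~\ref{th:Gallai} to obtain a Gallai partition $V_1,\dots,V_m$ with $m\ge 2$ minimum, whose reduced graph $R=K_m$ uses at most two colors. The basic accounting is that each monochromatic triangle of $R$ on parts $\{a,b,c\}$ blows up to $n_an_bn_c\ge 1$ monochromatic triangles of $c$, and distinct monochromatic triangles of $R$ yield distinct monochromatic triangles of $c$. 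Applying Goodman's theorem to $R$ itself, $R$ carries at least $M_2(K_3,m)$ monochromatic triangles, and $M_2(K_3,m)\ge M_2(K_3,6)=2$ once $m\ge 6$; so if $m\ge 6$, or more generally if $R$ has at least two monochromatic triangles, we are already done. Hence I may assume $m\le 5$ and that $R$ has at most one monochromatic triangle.

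A counting argument on part sizes is then decisive. With $m\le 5$ and $\sum_i n_i=5^{k/2}+1$, some part $V_1$ satisfies $n_1\ge\lceil(5^{k/2}+1)/5\rceil=5^{(k-2)/2}+1=GR_{k-2}(K_3)$. If the coloring restricted to $V_1$ avoids the (at most two) colors appearing on $R$, then it is a Gallai-$(k-2)$-coloring on at least $GR_{k-2}(K_3)$ vertices, and the induction hypothesis supplies two monochromatic triangles inside $V_1$, completing this case.

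The main obstacle is the case in which $V_1$ reuses internally a color of $R$, so that the clean reduction to a Gallai-$(k-2)$-coloring fails; the same difficulty, together with the residual small configurations $m\in\{2,4\}$ and the sub-case in which $R$ carries exactly one monochromatic triangle, requires a local triangle count. Here I would exploit the ``$2$-$1$'' triangles: if $V_1$ has an internal edge $xx'$ of color $\gamma$ and some part $V_j$ is joined to $V_1$ in color $\gamma$, then $xx'y$ is monochromatic for every $y\in V_j$, producing extra monochromatic triangles. Combining these contributions with the single monochromatic triangle already forced by $n\ge GR_k(K_3)$, and using the minimality of $m$ to control the structure of $R$ (in particular to rule out $m=3$, exactly as in the proof of Lemma~\ref{le:gr*}), should recover the count of two in every remaining case. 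I expect this local accounting — essentially showing that the colors of $R$ cannot be reused inside the large part without forcing a second monochromatic triangle — to be the most delicate point of the argument.
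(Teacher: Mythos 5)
Your upper bound, the odd-$k$ lower bound, the base case $k=2$, the reduction to $m\le 5$ via Goodman, and the clean case in which the big part avoids the colors of the reduced graph are all correct, and your overall framework (induction on even $k$, minimal Gallai partition, counting blown-up and ``2--1'' triangles) is the same as the paper's. But the proof stops exactly where the real work begins: the cases you yourself flag as delicate --- a color of $R$ reused inside a part, $m\in\{2,4\}$, and $R$ carrying one monochromatic triangle --- are left at the level of ``should recover the count of two,'' and these cases \emph{are} the paper's proof. Moreover, the paper closes them by a different mechanism than the one you project: not by accumulating triangles to reach two, but by size-bound contradictions. For $t=4$, minimality of the partition rules out any monochromatic triangle in $R$ (a triangle would merge parts into a coarser Gallai partition), so $R$ is one of the two triangle-free $2$-colorings of $K_4$, at most one edge of color $1$ or $2$ lies inside the parts, and then $n\le 4(GR_{k-2}(K_3)-1)+1<GR_k(K_3)$, contradicting $n=GR_k(K_3)$; for $t=5$, any internal edge of color $1$ or $2$ creates a $2$-colored $K_6$ with $M_2(K_3,6)=2$ triangles, so $n\le 5(GR_{k-2}(K_3)-1)<GR_k(K_3)$. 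Your minimality remark handles $m=3$, but you supply none of this accounting, nor Fact-level control such as ``a monochromatic triangle in $R$ contradicts minimality of $m$.''

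There is also a concrete flaw in the one quantitative reduction you do state. When $m=2$, only \emph{one} color appears on $R$, so a part avoiding the colors of $R$ carries a Gallai-$(k-1)$-coloring, not a Gallai-$(k-2)$-coloring, and your uniform bound $n_1\ge\lceil n/5\rceil=GR_{k-2}(K_3)$ is then useless, since $GR_{k-1}(K_3)=2\cdot 5^{(k-2)/2}+1>GR_{k-2}(K_3)$ and your induction hypothesis says nothing about odd color counts. The paper instead uses the $m=2$-specific estimate $|V_1|\ge n/2>2\cdot 5^{(k-2)/2}+2$ together with the \emph{odd}-$k$ value of Theorem~\ref{th:GRk3}, finding one triangle in $F[V_1]$, deleting one of its vertices, and finding a second; the reuse subcase ($c(V_1,V_2)\in C(V_1)$) is closed the same way after your 2--1 triangles force $|V_2|=1$ and at most one internal edge in that color. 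So the gap is genuine: the missing ``local accounting'' is not routine bookkeeping on top of what you wrote --- it requires the odd-$k$ Gallai--Ramsey numbers and the delete-a-vertex double count, ingredients absent from your plan.
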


\begin{proof}
By the definition of the Gallai-Ramsey number, we have $g_{k}(K_3,GR_k(K_3))\geq 1$. Moreover, it follows from the above extremal coloring that $g_{k}(K_3,GR_k(K_3))\leq 1$ if $k$ is odd, and $g_{k}(K_3,GR_k(K_3))\leq2$ if $k$ is even. Thus it suffices to prove that $g_{k}(K_3,GR_k(K_3))\geq2$ when $k$ is even. We will prove this by induction on $k$. For $k=2$, the statement is trivial since $M_2(K_3,6)=2$. We may assume that the statement holds for all even $k'\leq k-2$ and we will prove it for $k$ ($k\geq 4$).

Let $F$ be a Gallai-$k$-coloring of $K_{GR_k(K_3)}$ and suppose (for a contradiction) that $F$ contains only one monochromatic copy of $K_3$. Using Theorem \ref{th:Gallai}, let $V_1, V_2, \ldots, V_t$ ($t\geq 2$) be a Gallai partition of $V(F)$. We choose such a partition so that $t$ is minimum. We may assume that colors 1 and 2 are the two colors used between these parts. Let $R$ be a 2-edge-coloring of $K_t$ with $V(R)=\{v_1, v_2, \ldots, v_t\}$ and $c(v_iv_j)=c(V_i, V_j)$ for any $1\leq i<j\leq t$. Since $M_2(K_3,6)=2$, we have $t\leq 5$; otherwise $F$ contains at least two monochromatic copies of $K_3$.

If $2\leq t\leq 3$, then we may assume that $t=2$ by the minimality of $t$ (since every graph admitting a Gallai partition with three parts also admits a Gallai partition with two parts). Without loss of generality, let $c(V_1, V_2)=1$ and $|V_1|\geq |V_2|$. First, assume $1\notin C(V_1)$. Then $F[V_1]$ is a Gallai-$(k-1)$-coloring. Note that $|V_1|\geq |V(F)|/2\geq (5^{k/2}+1)/2> 2\cdot 5^{(k-2)/2}+2$. Since $k$ is even, we have $GR_{k-1}(K_3)=2\cdot5^{(k-2)/2}+1$. Thus there is a monochromatic copy of $K_3$ in $F[V_1]$. Let $v$ be a vertex of this $K_3$. Since $|V_1\setminus \{v\}|\geq 2\cdot5^{(k-2)/2}+1$, there is a monochromatic copy of $K_3$ in $F[V_1\setminus \{v\}]$. So there exist two monochromatic copies of $K_3$ in $F[V_1]$, a contradiction. We conclude that $1\in C(V_1)$. In order to avoid two monochromatic copies of $K_3$, we have $|V_2|=1$ and there is at most one edge with color 1 in $F[V_1]$. Thus there is a Gallai-$(k-1)$-coloring of $K_{|V_1|-1}$. Since $|V_1|-1\geq GR_{k-1}(K_3)$, there is a monochromatic copy of $K_3$ in $F[V_1]$. Then there exist two monochromatic copies of $K_3$ in $F$, another contradiction. This solves the case $2\leq t\leq 3$.

If $t=4$, then we first suppose that $R$ contains a monochromatic copy of $K_3$, say $c(V_1, V_2)=c(V_2, V_3)=c(V_3, V_1)=1$. Let $V'=V_1\cup V_2\cup V_3$. If $c(V_4, V')=2$, then $V_4$ and $V'$ form a Gallai partition with exactly two parts, contradicting the minimality of $t$. Thus $c(V_4, V_i)=1$ for some $i\in \{1, 2, 3\}$. But then $c(V_i, V(G)\setminus V_i)=1$, contradicting the minimality of $t$. Therefore, $R$ is one of the two 2-edge-colorings of $K_4$ without a monochromatic copy of $K_3$, that is, each color induces a path of length three, or one color induces a cycle of length four and  the other color induces a matching with two edges. In both cases we can derive that there is at most one edge with color 1 or 2 in $\bigcup^{4}_{j=1} F[V_j]$. By the induction hypothesis, we have $|V(F)|\leq 4(GR_{k-2}(K_3)-1)+1< GR_k(K_3)$, a contradiction.

The remaining case is $t=5$. Then there is no edge with color 1 or 2 in $\bigcup^{5}_{j=1} F[V_j]$; otherwise $F$ contains a 2-edge-coloring of $K_6$ which contains at least two monochromatic copies of $K_3$. Thus we have $|V(F)|\leq 5(GR_{k-2}(K_3)-1)< GR_k(K_3)$ by the induction hypothesis, a contradiction. This completes the proof of Theorem \ref{th:multiplicity2}.
\end{proof}

Now we have all ingredients to present our proof for the lower bound in Theorem~\ref{th:multiplicity1}. Let $s_0=1$ if $k$ is odd, and $s_0=2$ if $k$ is even. By Theorem~\ref{th:multiplicity2}, we have $g_{k}(K_3,GR_k(K_3))=s_0$. This implies that if $v_1, v_2, \ldots, v_{GR_k(K_3)}$ are any $GR_k(K_3)$ vertices of $K_n$, then $K_n[\{v_1, v_2, \ldots,$ $v_{GR_k(K_3)}\}]$ contains at least $s_0$ monochromatic copies of $K_3$. Since each monochromatic copy of $K_3$ is contained in $\binom{n-3}{GR_k(K_3)-3}$ distinct copies of $K_{GR_k(K_3)}$, there are at least
$$\left\lceil\frac{s_0\binom{n}{GR_k(K_3)}}{\binom{n-3}{GR_k(K_3)-3}}\right\rceil =\left\lceil\frac{s_0n(n-1)(n-2)}{GR_k(K_3)(GR_k(K_3)-1)(GR_k(K_3)-2)}\right\rceil$$
monochromatic copies of $K_3$ in any Gallai-$k$-coloring of $K_n$. This completes the proof of Theorem~\ref{th:multiplicity1}.

We obtain the following corollary.

\noindent\begin{corollary}\label{co:multiplicity1} If $k$ is odd and $0\leq t\leq 5^{(k-1)/2}-1$, then $g_{k}(K_3,GR_k(K_3)+t)=t+1$.
\end{corollary}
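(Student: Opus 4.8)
The plan is to prove the equality by establishing matching upper and lower bounds; throughout write $q := 5^{(k-1)/2}$, so that $GR_k(K_3) = 2q+1$ when $k$ is odd. For the upper bound I would specialize the upper bound of Theorem~\ref{th:multiplicity1} to $n = GR_k(K_3)+t = 2q+1+t$. Writing $n = qm+r$ with $0 \le r \le q-1$, one checks that $m=2$ and $r=t+1$ when $0\le t\le q-2$, while $m=3$ and $r=0$ when $t=q-1$. In the first case the bound reads $(t+1)\binom{3}{3}+(q-t-1)\binom{2}{3}=t+1$, and in the second it reads $q\binom{3}{3}=q=t+1$; either way $g_k(K_3,GR_k(K_3)+t)\le t+1$.

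For the lower bound I would argue by induction on $t$. The base case $t=0$ is precisely the statement $g_k(K_3,GR_k(K_3))=1$ for odd $k$ furnished by Theorem~\ref{th:multiplicity2}. For the inductive step, assume the bound holds at $t-1$ (with $1\le t\le q-1$) and let $F$ be any Gallai-$k$-coloring of $K_{GR_k(K_3)+t}$. Since $|V(F)|\ge GR_k(K_3)$, the definition of the Gallai-Ramsey number guarantees a monochromatic copy $T$ of $K_3$; let $a$ be one of its vertices. The restriction $F-a$ is again a Gallai-$k$-coloring, now of $K_{GR_k(K_3)+(t-1)}$, so by the induction hypothesis it contains at least $t$ monochromatic triangles. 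None of these contains $a$, whereas $T$ does, so $T$ is distinct from all of them; hence $F$ contains at least $t+1$ monochromatic triangles. Combining the two bounds yields $g_k(K_3,GR_k(K_3)+t)=t+1$.

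I expect the main conceptual point — rather than a genuine obstacle — to be recognizing that the general lower bound of Theorem~\ref{th:multiplicity1} is useless in this regime: it grows like $\big(n/GR_k(K_3)\big)^3$, which for $t=q-1$ gives only a constant near $(3/2)^3$, whereas we need a bound as large as $q$. The real content is therefore the inductive vertex-peeling argument, and its crux is the availability of the sharp base case $g_k(K_3,GR_k(K_3))=1$ from Theorem~\ref{th:multiplicity2}; without that exact value the induction could not start. The deletion step itself is clean, since removing a vertex preserves the Gallai property and removes exactly the monochromatic triangles passing through that vertex. Finally, I would remark that both restrictions in the hypothesis are essential: the assumption that $k$ is odd is used through $s_0=1$ in the base case, and the range $t\le q-1$ is exactly where the specialized upper bound equals $t+1$ (for $t=q$ one has $n=3q+1$, giving upper bound $q+3$ against lower bound $q+1$, so the two no longer coincide).
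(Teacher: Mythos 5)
Your proof is correct, and its overall skeleton (upper bound by specializing Theorem~\ref{th:multiplicity1}, lower bound by induction on $t$ anchored at Theorem~\ref{th:multiplicity2}) matches the paper's, but your inductive step is genuinely different. The paper argues by averaging: each of the $n$ copies of $K_{n-1}$ inside $K_n$ (where $n=GR_k(K_3)+t$) contains at least $t$ monochromatic triangles by the induction hypothesis, while each monochromatic triangle lies in exactly $n-3$ such copies, giving at least $\lceil tn/(n-3)\rceil$ monochromatic triangles; one then checks that $\lceil tn/(n-3)\rceil = t+1$, and it is precisely the hypothesis $t\leq 5^{(k-1)/2}-1$ (equivalently $n-3\geq 3t$) that makes the ceiling land on $t+1$. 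You instead peel a single vertex: since $n\geq GR_k(K_3)$ there is a monochromatic triangle $T$, and deleting a vertex $a\in V(T)$ leaves a Gallai-$k$-coloring of $K_{n-1}$ which, by the induction hypothesis, contains at least $t$ monochromatic triangles, all avoiding $a$ and hence all distinct from $T$. Your step is more elementary (no incidence count, no ceiling arithmetic), and it actually proves the lower bound $g_k(K_3,GR_k(K_3)+t)\geq t+1$ for \emph{every} $t\geq 0$, the range restriction being needed only so that the specialized upper bound matches --- a point your closing remark identifies exactly (your computations of $m$, $r$ and of the boundary case $t=5^{(k-1)/2}$ are all correct). What the paper's averaging device buys in exchange is that it exploits the induction hypothesis in all $n$ subgraphs simultaneously; it is the same mechanism the paper uses to derive the general cubic lower bound in Theorem~\ref{th:multiplicity1} from Theorem~\ref{th:multiplicity2}, whereas extracting one triangle and deleting a vertex can never yield superlinear growth. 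Both arguments are complete and correct for the corollary as stated.
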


\begin{proof}
The upper bound follows from Theorem \ref{th:multiplicity1}. For the proof of the lower bound, we will use induction on $t$. The case $t=0$ follows from Theorem \ref{th:multiplicity2}. We may assume that $g_{k}(K_3,GR_k(K_3)+(t-1))=(t-1)+1=t$ holds and we will prove it for $t$ ($1\leq t\leq 5^{(k-1)/2}-1$). Let $n=GR_k(K_3)+t$. Note that each monochromatic copy of $K_3$ is contained in $\binom{n-3}{n-1-3}=n-3$ distinct copies of $K_{n-1}$, and there are $\binom{n}{n-1}=n$ distinct copies of $K_{n-1}$ in $K_n$. By the induction hypothesis, there are at least $\lceil tn/(n-3)\rceil=t+1$ monochromatic copies of $K_3$ in any Gallai-$k$-coloring of $K_n$.
\end{proof}

\section{The Gallai-Ramsey number for $K_4+e$}
\label{sec:ch-proof-GRk4+4}

For an integer $s$ with $0\leq s\leq k$, if $H_1=\cdots=H_s=K_4+e$ and $H_{s+1}=\cdots=H_{k}=K_3$, we will write $GR_k(s\cdot K_4+e, (k-s)\cdot K_3)$ for $GR(K_4+e, \ldots, K_4+e, K_3, \ldots, K_3)$. In this section, we will prove Theorem \ref{th:GRk4+e} in the following more general form. Theorem \ref{th:GRk4+e} follows from Theorem \ref{th:GRk4+egeneral} by choosing $s=k$.

\noindent\begin{theorem}\label{th:GRk4+egeneral} For integers $k\geq 1$ and $0\leq s\leq k$, we have
$$
GR_k(s\cdot K_4+e, (k-s)\cdot K_3) =
\begin{cases}
17^{s/2}\cdot5^{(k-s)/2}+1,  & \text{if $s$ is even and $k-s$ is even,} \\
2\cdot17^{s/2}\cdot5^{(k-s-1)/2}+1,  & \text{if $s$ is even and $k-s$ is odd,} \\
8\cdot17^{(s-1)/2}\cdot5^{(k-s-1)/2}+1,  & \text{if $s$ is odd and $k-s$ is odd,} \\
4\cdot 17^{(s-1)/2}\cdot5^{(k-s)/2}+1, & \text{if $s$ is odd and $k-s$ is even.}
\end{cases}
$$
\end{theorem}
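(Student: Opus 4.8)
**The plan is to prove Theorem~\ref{th:GRk4+egeneral} via the standard two-sided argument for Gallai-Ramsey numbers: a blow-up construction for the lower bound and a Gallai-partition induction for the upper bound.**

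For the lower bound, I would build an extremal coloring by iterated blow-ups, exactly mirroring the construction in Section~\ref{sec:ch-proof-multiplicity}. The base objects are a $2$-coloring of $K_5$ with each color inducing a $C_5$ (no monochromatic $K_3$, used to ``spend'' two colors of $K_3$-type) and a $2$-coloring of $K_{17}$ with no monochromatic $K_4+e$ (which exists since $R_2(K_4+e)=18$, and this ``spends'' two colors of $K_4+e$-type). Given the four parity cases, I would repeatedly apply the blow-up operation $G'(n\cdot G)$ from the preliminaries: each pair of $K_4+e$-colors contributes a factor of $17$, each pair of $K_3$-colors a factor of $5$, one leftover $K_4+e$-color a factor of $4$ (a $K_4$-free monochromatic-safe structure, since a single extra color on a $K_4$-free graph of size $R_2(K_4+e)-1$ restricted appropriately gives the factor $4$), and one leftover $K_3$-color a factor of $2$. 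Checking that the blow-up introduces no rainbow triangle is immediate from Theorem~\ref{th:Gallai}-type reasoning, and the absence of a monochromatic $K_4+e$ in the relevant colors (resp.\ monochromatic $K_3$ in the others) follows because a monochromatic copy would project down to a forbidden configuration in one of the base factors.

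For the upper bound, I would argue by induction on $k$, taking a Gallai partition $V_1,\dots,V_t$ of $V(F)$ by Theorem~\ref{th:Gallai}, chosen with $t$ minimal, and let $R$ be the reduced $2$-coloring of $K_t$ using colors, say, $1$ and $2$. The two colors between parts are either both of $K_4+e$-type, both of $K_3$-type, or mixed, and in each subcase I would bound $t$: to avoid a monochromatic $K_3$ in a $K_3$-color one needs $t\le 5$, while to avoid a monochromatic $K_4+e$ in a $K_4+e$-color one can tolerate $t$ up to $R_2(\cdot)-1$ but must then control how many colors survive inside the parts. The key recursion is that if color $i$ appears between the parts, then color $i$ is forbidden (or nearly forbidden, up to a bounded local configuration) inside each part, so each $F[V_i]$ is a Gallai-coloring using two fewer colors of the appropriate type; combining $|V(F)|\le (\text{reduced bound})\cdot(GR\text{-value with two fewer colors})$ with the induction hypothesis yields the claimed count. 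The delicate point relative to the pure-$K_3$ case is that $K_4+e$ is not complete: a single monochromatic edge of color $i$ \emph{inside} a part need not create a monochromatic $K_4+e$, so one must carefully track how many color-$i$ edges a part may contain before a $K_4+e$ is forced, and handle the analysis of small reduced graphs $R$ on up to $17$ vertices.

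\textbf{The main obstacle} will be the upper-bound case analysis when one or both inter-part colors are of $K_4+e$-type and the reduced graph $R$ is large. Here the Gallai structure allows $t$ to be as large as $17$, so instead of the short $t\le 5$ enumeration available in the $K_3$ setting, one must show that a $2$-colored $K_{17}$ reduced graph forces enough inherited structure that the recursion closes with the exact factor $17$; this requires invoking $R_2(K_4+e)=18$ together with a refined argument (likely using the stability/structure of extremal $2$-colorings of $K_{17}$ and the values $R(K_4+e,K_3)=9$ from Theorem~\ref{th:ramsey}) to rule out configurations where a surviving color inside a part could push the vertex count above the claimed bound. I expect the mixed parity cases (one color $K_4+e$-type, one color $K_3$-type) to demand the most careful bookkeeping, since the two inter-part colors impose asymmetric constraints and one must verify that the resulting product bound matches each of the four stated expressions exactly rather than merely up to a constant factor.
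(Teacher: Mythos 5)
Your overall strategy (iterated blow-up for the lower bound, induction over a minimal Gallai partition for the upper bound, proved in the mixed form with parameters $k,s$) is exactly the paper's, but your plan has one concrete error and one missing key idea. The error is in the lower bound: you instantiate the $K_{17}$ blow-up base as a $2$-coloring with no monochromatic $K_4+e$, justified by $R_2(K_4+e)=18$. That is insufficient. After the first iteration the parts of the blow-up $G''(17\cdot G_i)$ have at least two vertices, so a monochromatic $K_4$ in the base in color $c$ --- which a $K_4+e$-free coloring of $K_{17}$ may perfectly well contain --- blows up to a monochromatic $K_{2,1,1,1}$ in color $c$, and that contains $K_4+e$. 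The construction needs a $2$-coloring of $K_{17}$ with no monochromatic $K_4$, which exists because $R_2(K_4)=18$ (Theorem \ref{th:ramsey}(1)); the paper uses precisely this. Relatedly, your justification of the leftover factor $4$ is garbled: the paper simply starts, when $s$ is odd, from a $K_4$ monochromatic in the leftover color, which contains no $K_4+e$ because it has only four vertices, and under later blow-ups that color remains confined to vertex-disjoint $K_4$'s.

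On the upper bound, you correctly flag the delicate point --- a part may contain an edge in an inter-part color without forcing a $K_4+e$ --- but you do not supply the mechanism that closes the recursion with exact constants. The paper's crucial observation (Fact \ref{fa:GRk4+egeneral-2}(5)) is that a part can contain a red edge but never a red \emph{triangle}; hence inside such a part red is downgraded from $K_4+e$-type to $K_3$-type, and the induction hypothesis applies with $|V_i|\leq g(k,s-2)$ for parts containing both red and blue edges, $g(k-1,s-2)$ for parts containing exactly one, and $g(k-2,s-2)$ for parts containing neither. This type-downgrade is the real reason the theorem must be proved in the mixed form, and it does not appear in your sketch (your ``color $i$ is forbidden, or nearly forbidden, inside each part'' is too coarse). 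One then needs the structural facts about the reduced coloring --- $d^r_i\leq 3$ for parts with a red edge, $d^r_i\leq 8$ via $R(K_3,K_4+e)=9$, at most one (in fact no) part with both colors, $|\mathcal{R}|,|\mathcal{B}|\leq 3$ --- feeding a weighted count $n\leq \left(5x_2/17+5x_1/34+x_0/17\right)g(k,s)$, which reduces to the inequality $2t+8x_2+3x_1\leq 34$, plus a separate endgame at $t=12$. Also, contrary to your expectation, the hardest case is not a large reduced graph needing stability of extremal colorings of $K_{17}$: for $13\leq t\leq 17$ the degree bounds immediately force $\mathcal{R}=\mathcal{B}=\emptyset$ and the count closes; the genuine work is the bookkeeping at $t=12$ and the claims bounding $|\mathcal{R}|,|\mathcal{B}|$.
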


\begin{proof}
For convenience, let
$$
g(k,s) :=
\begin{cases}
17^{s/2}\cdot5^{(k-s)/2},  & \text{if $s$ is even and $k-s$ is even,} \\
2\cdot17^{s/2}\cdot5^{(k-s-1)/2},  & \text{if $s$ is even and $k-s$ is odd,} \\
8\cdot17^{(s-1)/2}\cdot5^{(k-s-1)/2},  & \text{if $s$ is odd and $k-s$ is odd,} \\
4\cdot 17^{(s-1)/2}\cdot5^{(k-s)/2}, & \text{if $s$ is odd and $k-s$ is even.}
\end{cases}
$$

We first prove $GR_k(s\cdot K_4+e, (k-s)\cdot K_3)>g(k,s)$ by construction. Let $G_0$ be a single vertex and $G_1$ be a monochromatic copy of $K_4$ using color 1. If $s$ is even, then we will begin with $G_0$ and iteratively construct Gallai-colored graphs. If $s$ is odd, then we will begin with $G_1$ and iteratively construct Gallai-colored graphs. Suppose we have constructed $G_i$ for some $i< k$. Let $G'$ be a 2-edge-colored $K_5$ using colors $i+1$ and $i+2$ which contains no monochromatic copy of $K_3$, and $G''$ be a 2-edge-colored $K_{17}$ using colors $i+1$ and $i+2$ which contains no monochromatic copy of $K_4$. We construct $G_{i+2}$ or $G_{i+1}$ based on the following rules:
\begin{itemize}
\item[(1)] If $i\leq s-2$, then we construct $G_{i+2}$ such that $G_{i+2}=G''(17 \cdot G_i)$.
\item[(2)] If $s\leq i\leq k-2$, then we construct $G_{i+2}$ such that $G_{i+2}=G'(5 \cdot G_i)$.
\item[(3)] If $i=k-1$, then we construct $G_{i+1}$ by connecting two copies of $G_{i}$ with edges using color $k$.
\end{itemize}
Finally, we obtain a $g(k,s)$-vertex Gallai-$k$-colored graph $G_k$ containing neither a monochromatic copy of $K_4+e$ in any of the first $s$ colors nor a monochromatic copy of $K_3$ in any of the last $k-s$ colors.

In the following, we will prove $GR_k(s\cdot K_4+e, (k-s)\cdot K_3)\leq g(k,s)+1$ by induction on $k+s$. The case $k=1$ is trivial, the case $k=2$ follows from Theorem \ref{th:ramsey}, and the case $s=0$ follows from Theorem \ref{th:GRk3}. So we may assume that the result holds for all $k'+s'<k+s$ and we will prove it for $k+s$, where $k\geq 3$ and $1\leq s\leq k$.

Let $G$ be a Gallai-$k$-coloring of $K_{n}$, where $n=g(k,s)+1$. For a contradiction, suppose that $G$ contains neither a monochromatic copy of $K_4+e$ in any of the first $s$ colors nor a monochromatic copy of $K_3$ in any of the last $k-s$ colors. By Theorem \ref{th:Gallai}, let $V_1, V_2, \ldots, V_t$ ($t\geq 2$) be a Gallai partition of $V(G)$. We choose such a partition so that $t$ is minimum. We may assume that red and blue are the two colors used between these parts, where red and blue are two of the $k$ colors. Note that $n=g(k,s)+1\geq 21$ since $k\geq 3$ and $1\leq s\leq k$.

\noindent\begin{claim}\label{cl:GRk4+egeneral-1} $t\geq 4$.
\end{claim}

\begin{proof} If $t=3$, then at least two of the colors $c(V_1, V_2)$, $c(V_1, V_3)$ and $c(V_2, V_3)$ are the same color, say $c(V_1, V_2)=c(V_1, V_3)$. This implies that $V_1$ and $V(G)\setminus V_1$ form a Gallai partition with exactly two parts, contradicting the minimality of $t$. Hence, $t=2$, and we may assume that $c(V_1, V_2)$ is red without loss of generality.

If there is no red edge within both $V_1$ and $V_2$, then $G[V_1]$ and $G[V_2]$ are two Gallai-$(k-1)$-colorings. By the induction hypothesis, if red is one of the first $s$ colors, then we have
\begin{equation*}
\begin{aligned}
n &=|V_1|+|V_2|\leq 2\cdot g(k-1,s-1) \\
&=
\begin{cases}
2\cdot17^{(s-1)/2}\cdot5^{(k-s)/2},  & \text{if $s-1$ is even ($s$ is odd) and $k-s$ is even,} \\
2\cdot2\cdot17^{(s-1)/2}\cdot5^{(k-s-1)/2},  & \text{if $s-1$ is even ($s$ is odd) and $k-s$ is odd,} \\
2\cdot8\cdot17^{(s-2)/2}\cdot5^{(k-s-1)/2},  & \text{if $s-1$ is odd ($s$ is even) and $k-s$ is odd,} \\
2\cdot4\cdot 17^{(s-2)/2}\cdot5^{(k-s)/2}, & \text{if $s-1$ is odd ($s$ is even) and $k-s$ is even}
\end{cases} \\
&\leq g(k,s),
\end{aligned}
\end{equation*}
a contradiction. If red is one of the last $k-s$ colors, then we have
\begin{equation*}
\begin{aligned}
n &=|V_1|+|V_2|\leq 2\cdot g(k-1,s) \\
&=
\begin{cases}
2\cdot17^{s/2}\cdot5^{(k-s-1)/2},  & \text{if $s$ is even and $k-s-1$ is even ($k-s$ is odd),} \\
2\cdot2\cdot17^{s/2}\cdot5^{(k-s-2)/2},  & \text{if $s$ is even and $k-s-1$ is odd ($k-s$ is even),} \\
2\cdot8\cdot17^{(s-1)/2}\cdot5^{(k-s-2)/2},  & \text{if $s$ is odd and $k-s-1$ is odd ($k-s$ is even),} \\
2\cdot4\cdot 17^{(s-1)/2}\cdot5^{(k-s-1)/2}, & \text{if $s$ is odd and $k-s-1$ is even ($k-s$ is odd)}
\end{cases} \\
&\leq g(k,s),
\end{aligned}
\end{equation*}
a contradiction.

Thus we may assume that $G[V_1]$ contains a red edge, so red is one of the first $s$ colors. In order to avoid a red copy of $K_4+e$, there is no red edge within $V_2$ and there is no red copy of $K_3$ within $V_1$ (recall that $n\geq 21$). By the induction hypothesis, we have
\begin{equation*}
\begin{aligned}
n &=|V_1|+|V_2|\leq g(k,s-1)+g(k-1, s-1) \\
&=
\begin{cases}
8\cdot17^{(s-2)/2}\cdot5^{(k-s)/2}+4\cdot 17^{(s-2)/2}\cdot5^{(k-s)/2},  & \text{if $s$ is even and $k-s$ is even,} \\
4\cdot17^{(s-2)/2}\cdot5^{(k-s+1)/2}+8\cdot17^{(s-2)/2}\cdot5^{(k-s-1)/2},  & \text{if $s$ is even and $k-s$ is odd,} \\
17^{(s-1)/2}\cdot5^{(k-s+1)/2}+2\cdot 17^{(s-1)/2}\cdot5^{(k-s-1)/2}, & \text{if $s$ is odd and $k-s$ is odd,} \\
2\cdot17^{(s-1)/2}\cdot5^{(k-s)/2}+17^{(s-1)/2}\cdot5^{(k-s)/2},  & \text{if $s$ is odd and $k-s$ is even}
\end{cases} \\
&\leq g(k,s),
\end{aligned}
\end{equation*}
a contradiction. This completes the proof of Claim~\ref{cl:GRk4+egeneral-1}.
\end{proof}

We define $R$ to be a 2-edge-coloring of $K_t$ with $V(R)=\{v_1, v_2, \ldots, v_t\}$ and $c(v_iv_j)=c(V_i, V_j)$ for any $1\leq i<j\leq t$. Note that if $R$ contains a 2-edge-colored subgraph $H$, then $G$ also contains a copy of $H$ (in fact, $G$ contains a blow-up of $H$). For each $i\in [t]$, let $N^r_i:=\{j\in [t]\setminus \{i\}\colon\, c(v_iv_j) \mbox{ is red}\}$, $N^b_i:=\{j\in [t]\setminus \{i\}\colon\, c(v_iv_j) \mbox{ is blue}\}$, $d^r_i:=\left|N^r_i\right|$ and $d^b_i:=\left|N^b_i\right|$. By Claim \ref{cl:GRk4+egeneral-1} and the minimality of $t$, we have $d^r_i\geq 1$ and $d^b_i\geq 1$ for every $i\in [t]$. We claim that at least one of red and blue is among the first $s$ colors. Indeed, if both red and blue are among the last $k-s$ colors, then $R$ contains no monochromatic copy of $K_3$. So $t\leq R(K_3, K_3)-1=5$. Moreover, for every $i\in [t]$, since $d^r_i\geq 1$ and $d^b_i\geq 1$, there is no red edge and no blue edge within $V_i$ in $G$. By the induction hypothesis, we have $n=\sum^t_{i=1}|V_i|\leq 5\cdot g(k-2,s)\leq g(k,s)$, a contradiction.

Let $\mathcal{R}:=\{i\in [t]\colon\, G[V_i] \mbox{ contains a red edge}\}$ and $\mathcal{B}:=\{i\in [t]\colon\, G[V_i] \mbox{ contains a blue}$ edge$\}$. Let $x_0:=\left|[t]\setminus (\mathcal{R}\cup \mathcal{B})\right|$, $x_1:=\left|\mathcal{R}\bigtriangleup \mathcal{B}\right|$ and $x_2:=\left|\mathcal{R}\cap \mathcal{B}\right|$, so $t=x_0+x_1+x_2$. We have the following simple facts.

\noindent\begin{fact}\label{fa:GRk4+egeneral-2} $ $
\begin{itemize}
\item[{\rm (1)}] For any $i\in \mathcal{R}$ (resp., $i\in \mathcal{B}$), we have that $v_i$ is not contained in any red copy of $K_3$ (resp., blue copy of $K_3$) in $R$.

\item[{\rm (2)}] For any $i, j\in \mathcal{R}$ (resp., $i, j\in \mathcal{B}$) with $i\neq j$, we have that $c(V_i, V_j)$ is blue (resp., red).

\item[{\rm (3)}] For any $i\in \mathcal{R}$ (resp., $i\in \mathcal{B}$), we have $d^r_i\leq 3$ (resp., $d^b_i\leq 3$).

\item[{\rm (4)}] For any $i\in [t]$, we have $d^r_i\leq 8$ and $d^b_i\leq 8$.

\item[{\rm (5)}] For any $i\in [t]$, $G[V_i]$ contains neither a red copy of $K_3$ nor a blue copy of $K_3$.

\item[{\rm (6)}] $x_2\leq 1$.
\end{itemize}
\end{fact}

\begin{proof} By the symmetry of red and blue, we will only prove the red case for (1)--(5). Note that if red is one of the last $k-s$ colors, then Fact \ref{fa:GRk4+egeneral-2} holds clearly. So we may assume that red is one of the first $s$ colors.

(1) If there exists an $i\in \mathcal{R}$ such that $v_i$ is contained in a red copy of $K_3$ in $R$, say $v_iv_jv_{\ell}$, then in order to avoid a red copy of $K_4+e$, we have that $c(V_i\cup V_j\cup V_{\ell}, V(G)\setminus (V_i\cup V_j\cup V_{\ell}))$ is blue. By the minimality of $t$, we have $t=2$, contradicting Claim \ref{cl:GRk4+egeneral-1}.

(2) If there exist some $i, j\in \mathcal{R}$ with $i\neq j$ such that $c(V_i, V_j)$ is red, then for avoiding a red copy of $K_4+e$, we have that $c(V_i\cup V_j, V(G)\setminus (V_i\cup V_j))$ is blue. By the minimality of $t$, we have $t=2$, contradicting  Claim \ref{cl:GRk4+egeneral-1}.

(3) If there exists an $i\in \mathcal{R}$ such that $d^r_i\geq 4$, then $\{v_j\colon\, j\in N^r_i\}$ forms a blue copy of $K_{d^r_i}$ by (1). In order to avoid a blue copy of $K_4+e$, we have $d^r_i= 4$ and $c(\bigcup_{j\in N^r_i}V_j, \bigcup_{\ell \in [t]\setminus N^r_i}V_{\ell})$ is red. By the minimality of $t$, we have $t=2$, contradicting Claim \ref{cl:GRk4+egeneral-1}.

(4) Suppose $d^r_i\geq 9$ for some $i\in [t]$. In order to avoid a red copy of $K_4+e$, there is no red copy of $K_3$ in $R[\{v_j\colon\, j\in N^r_i\}]$. Since $R(K_3, K_4+e)=9$, there is a blue copy of $K_4+e$ (and thus a blue copy of $K_3$), a contradiction.

(5) Suppose that $G[V_i]$ contains a red copy of $K_3$ for some $i\in [t]$. Since $d^{r}_i\geq 1$, we may assume that $c(V_i, V_j)$ is red for some $j\in [t]\setminus \{i\}$. In order to avoid a red copy of $K_4+e$, we have that $c(V_i\cup V_j, V(G)\setminus (V_i\cup V_j))$ is blue. By the minimality of $t$, we have $t=2$, contradicting Claim \ref{cl:GRk4+egeneral-1}.

(6) If $x_2=\left|\mathcal{R}\cap \mathcal{B}\right|\geq 2$, then we can derive a contradiction by (2).
\end{proof}

We divide the rest of the proof into two cases according to where red and blue are in the list of colors.

\medskip\noindent
{\bf Case 1.} Red is among the first $s$ colors and blue is among the last $k-s$ colors.
\vspace{0.05cm}

In this case, there is no red copy of $K_4+e$ and no blue copy of $K_3$ in $G$. Since $R(K_4+e, K_3)=9$, we have $4\leq t\leq 8$. Recall that $d^r_i\geq 1$ and $d^b_i\geq 1$ for every $i\in [t]$. So there is no blue edge within each $V_i$. Thus $|\mathcal{B}|=0$, $x_1=|\mathcal{R}|$, $x_2=0$ and $x_0=t-x_1$. We claim that $x_1\leq 2$, since otherwise if $|\mathcal{R}|\geq 3$, then there is a blue copy of $K_3$ by Fact \ref{fa:GRk4+egeneral-2} (2).

For each $i\in \mathcal{R}$, $G[V_i]$ contains no red copy of $K_3$ by Fact \ref{fa:GRk4+egeneral-2} (5). By the induction hypothesis, we have
\begin{equation*}
\begin{aligned}
|V_i|&\leq g(k-1, s-1) \\
&=
\begin{cases}
17^{(s-1)/2}\cdot5^{(k-s)/2},  & \text{if $s-1$ is even ($s$ is odd) and $k-s$ is even,} \\
2\cdot17^{(s-1)/2}\cdot5^{(k-s-1)/2},  & \text{if $s-1$ is even ($s$ is odd) and $k-s$ is odd,} \\
8\cdot17^{(s-2)/2}\cdot5^{(k-s-1)/2},  & \text{if $s-1$ is odd ($s$ is even) and $k-s$ is odd,} \\
4\cdot 17^{(s-2)/2}\cdot5^{(k-s)/2}, & \text{if $s-1$ is odd ($s$ is even) and $k-s$ is even}
\end{cases} \\
&\leq \frac{1}{4}g(k,s).
\end{aligned}
\end{equation*}

For each $i\in [t]\setminus (\mathcal{R}\cup \mathcal{B})$, by the induction hypothesis, we have

\begin{equation*}
\begin{aligned}
|V_i| &\leq g(k-2, s-1)
&=
\begin{cases}
17^{(s-1)/2}\cdot5^{(k-s-1)/2},  & \text{if $s-1$ is even and $k-s-1$ is even}\\
                               & \text{($s$ is odd and $k-s$ is odd),} \\
2\cdot17^{(s-1)/2}\cdot5^{(k-s-2)/2},  & \text{if $s-1$ is even and $k-s-1$ is odd}\\
                               & \text{($s$ is odd and $k-s$ is even),} \\
8\cdot17^{(s-2)/2}\cdot5^{(k-s-2)/2},  & \text{if $s-1$ is odd and $k-s-1$ is odd} \\
                               & \text{($s$ is even and $k-s$ is even),} \\
4\cdot 17^{(s-2)/2}\cdot5^{(k-s-1)/2}, & \text{if $s-1$ is odd and $k-s-1$ is even} \\
                               & \text{($s$ is even and $k-s$ is odd)}
\end{cases} \\
&\leq \frac{1}{8}g(k,s).
\end{aligned}
\end{equation*}

Thus $n\leq (x_1/4+x_0/8)g(k,s)$. It suffices to prove that $x_1/4+x_0/8\leq 1$. If $x_1\leq 8-t$, then $x_1/4+x_0/8=(2x_1+x_0)/8=(x_1+t)/8\leq 1$. Thus we may assume $x_1\geq 8-t+1$. Recall that we have $t\leq 8$ and $x_1\leq 2$ in this case. So $|\mathcal{R}|=x_1\geq 1$ and $7\leq t\leq 8$. For any $i\in \mathcal{R}$, we have $d^r_i\leq 2$ for avoiding a blue copy of $K_3$ and by Fact \ref{fa:GRk4+egeneral-2} (1). Thus $d^b_i\geq 4$. Since there is no blue copy of $K_3$, we have that $\{v_j\colon\, j\in N^b_i\}$ forms a red copy of $K_{d^b_i}$. Then $c(\bigcup_{j\in N^b_i}V_j, \bigcup_{\ell \in [t]\setminus N^b_i}V_{\ell})$ is blue. By the minimality of $t$, we have $t=2$, contradicting Claim \ref{cl:GRk4+egeneral-1}.

\medskip\noindent
{\bf Case 2.} Both red and blue are among the first $s$ colors.
\vspace{0.05cm}

In this case, we have $4\leq t\leq 17$ since $R(K_4+e, K_4+e)=18$. Moreover, we have $s\geq 2$ and thus $g(k,s)\geq 34$ (recall that $k\geq 3$). By the induction hypothesis, for every $i\in [t]\setminus (\mathcal{R}\cup \mathcal{B})$, we have $|V_i|\leq g(k-2,s-2)= \frac{1}{17}g(k,s)$.
For any $i\in [t]$, $G[V_i]$ contains neither a red copy of $K_3$ nor a blue copy of $K_3$ by Fact \ref{fa:GRk4+egeneral-2} (5). Thus for each $i\in \mathcal{R}\cap \mathcal{B}$, by the induction hypothesis, we have $|V_i|\leq g(k, s-2)= \frac{5}{17}g(k,s)$. And for each $i\in \mathcal{R}\bigtriangleup \mathcal{B}$, we have
\begin{equation*}
\begin{aligned}
|V_i| &\leq g(k-1, s-2)&=
\begin{cases}
17^{(s-2)/2}\cdot5^{(k-s+1)/2},  & \text{if $s-2$ is even and $k-s+1$ is even,} \\
2\cdot17^{(s-2)/2}\cdot5^{(k-s)/2},  & \text{if $s-2$ is even and $k-s+1$ is odd,} \\
8\cdot17^{(s-3)/2}\cdot5^{(k-s)/2},  & \text{if $s-2$ is odd and $k-s+1$ is odd,} \\
4\cdot 17^{(s-3)/2}\cdot5^{(k-s+1)/2}, & \text{if $s-2$ is odd and $k-s+1$ is even}
\end{cases} \\
&\leq \frac{5}{34}g(k,s).
\end{aligned}
\end{equation*}

Thus $n\leq (5x_2/17+5x_1/34+x_0/17)g(k,s)$. It suffices to prove that $10x_2+5x_1+2x_0=2t+8x_2+3x_1\leq 34$.

\noindent\begin{claim}\label{cl:GRk4+egeneral-3} $x_2=0$.
\end{claim}

\begin{proof} By Fact \ref{fa:GRk4+egeneral-2} (6), we have $x_2\leq 1$. For a contradiction, suppose $\mathcal{R}\cap \mathcal{B}=\{1\}$. By Fact \ref{fa:GRk4+egeneral-2} (3), we have $d^r_1\leq 3$ and $d^b_1\leq 3$, so $t\leq 7$. If $t\leq 5$, then $2t+8x_2+3x_1\leq 10+8+12\leq 34$. If $6\leq t\leq 7$, then we may assume that $d^r_1= 3$ without loss of generality, say $N^r_1=\{2, 3, 4\}$. By Fact \ref{fa:GRk4+egeneral-2} (1), we have that $c(v_2v_3)=c(v_3v_4)=c(v_2v_4)$ is blue. By Fact \ref{fa:GRk4+egeneral-2} (1) and (2), we have $2, 3, 4\notin \mathcal{R}\cup \mathcal{B}$. Thus $x_1\leq t-4$, so $2t+8x_2+3x_1\leq 8+5t-12\leq 34$.
\end{proof}

\noindent\begin{claim}\label{cl:GRk4+egeneral-4} $|\mathcal{R}|\leq 3$ and $|\mathcal{B}|\leq 3$. If $|\mathcal{R}|= 3$ (resp., $|\mathcal{B}|= 3$), then $|\mathcal{B}|\leq 1$ (resp., $|\mathcal{R}|\leq 1$).
\end{claim}

\begin{proof} If $|\mathcal{R}|\geq 4$ (resp., $|\mathcal{B}|\geq 4$), then $G$ contains a blue (resp., red) $K_{2,2,2,2}$ by Fact \ref{fa:GRk4+egeneral-2} (2). This implies a monochromatic copy of $K_4+e$ in $G$. Thus $|\mathcal{R}|\leq 3$ and $|\mathcal{B}|\leq 3$.

If $|\mathcal{R}|= 3$ and $2\leq |\mathcal{B}|\leq 3$, then $R[\{v_i\colon\, i\in\mathcal{R}\}]$ and $R[\{v_i\colon\, i\in\mathcal{B}\}]$ form a blue clique and a red clique (by Fact \ref{fa:GRk4+egeneral-2} (2)), respectively. By Fact \ref{fa:GRk4+egeneral-2} (1), for any $i\in \mathcal{R}$ (resp., $i\in \mathcal{B}$), there is at most one red (resp., blue) edge between $v_i$ and $\{v_j\colon\, j\in\mathcal{B}\}$ (resp., $\{v_j\colon\, j\in\mathcal{R}\}$). Thus there are at most $|\mathcal{R}|+|\mathcal{B}|< |\mathcal{R}||\mathcal{B}|$ edges between $\{v_i\colon\, i\in\mathcal{R}\}$ and $\{v_i\colon\, i\in\mathcal{B}\}$, a contradiction. Therefore, if $|\mathcal{R}|= 3$, then $|\mathcal{B}|\leq 1$, and similarly, if $|\mathcal{B}|= 3$, then $|\mathcal{R}|\leq 1$.
\end{proof}

By Claims \ref{cl:GRk4+egeneral-3} and \ref{cl:GRk4+egeneral-4}, we have $x_2=0$ and $x_1=|\mathcal{R}|+|\mathcal{B}|\leq 4$. If $t\leq 11$, then $2t+8x_2+3x_1\leq 22+0+12=34$. If $13\leq t\leq 17$, then $|\mathcal{R}|=|\mathcal{B}|=0$ by Fact \ref{fa:GRk4+egeneral-2} (3) and (4), so $2t+8x_2+3x_1\leq 34+0+0=34$. Thus $t=12$. We have $x_1=|\mathcal{R}|+|\mathcal{B}|= 4$; otherwise $2t+8x_2+3x_1\leq 24+0+9\leq34$. Then we further have $|\mathcal{R}|\geq 1$ and $|\mathcal{B}|\geq 1$ by Claim \ref{cl:GRk4+egeneral-4}. Without loss of generality, let $1\in \mathcal{R}$, $2\in \mathcal{B}$ and let $c(V_1, V_2)$ be blue. Moreover, by Fact \ref{fa:GRk4+egeneral-2} (3) and (4), we have $d^r_1=3$, $d^b_1=8$, $d^b_2=3$ and $d^r_2=8$. We may further assume that $c(V_1, V_3\cup V_4\cup \cdots \cup V_9)$ is blue. By Fact \ref{fa:GRk4+egeneral-2} (1), we have $c(V_2, V_3\cup V_4\cup \cdots \cup V_9)$ is red. Since $R(K_3, K_3)=6$, there is either a red copy of $K_3$ or a blue copy of $K_3$ in $R[\{v_3, v_4, \ldots, v_9\}]$. Then there is either a red copy of $K_4+e$ or a blue copy of $K_4+e$ in $G$, a contradiction.
\end{proof}

\section{Concluding remarks}
\label{sec:ch-remark}

In Section \ref{sec:ch-proof-RMk3k3}, we studied the maximum number (denoted by $f_k(n,H)$) of edges that are not contained in any rainbow triangle or monochromatic copy of $H$. There we showed that $f_k(n,H)\geq t(n, GR_{k-1}(\mathscr{H})-1)$, where $\mathscr{H}$ is the set of homomorphic copies of $H$. Let $f'_k(n,H)$ be the maximum number of edges not contained in any monochromatic copy of $H$ over all Gallai-$k$-colorings of $K_n$. Then we clearly have $f'_k(n, H)\leq f_k(n, H)$. Using the sharpness example constructed in the proof of Lemma \ref{le:gr*bound} (2), we can also show that $f'_k(n,H)\geq t(n, GR_{k-1}(\mathscr{H})-1)$. Thus we have $t(n, GR_{k-1}(\mathscr{H})-1)\leq f'_k(n,H)\leq f_k(n,H)$. An interesting and natural question is for which graphs $H$ the equality  $f'_k(n,H)= f_k(n,H)$ holds.

Another problem related to Section \ref{sec:ch-proof-RMk3k3} is to determine the maximum number nim$_k(n, H)$ of edges not contained in any monochromatic copy of $H$ over all $k$-edge-colorings of $K_n$. As remarked in \cite{LiPS}, if the Erd\H{o}s-S\'{o}s conjecture holds for a tree $T$ (i.e., $ex(n, T)\leq (|V(T)|-2)n/2$), then for each $n\geq k^2(|V(T)|-1)^2$ with $(|V(T)|-1)\mid n$, we have nim$_k(n, T)\geq (k-1)ex(n, T)$. In fact, when $T$ is a star, we can prove the above statement for all $n\geq k^2(|V(T)|-1)^2$. Let $H$ be an $n$-vertex $K_{1,h}$-free graph with $ex(n, K_{1,h})$ edges. Note that the maximum degree of $H$ is at most $h-1$. For every $i\in [k-1]$, let $f_i\colon\, V(H)\rightarrow [n]$ be an arbitrary bijection and let $H_i$ be the graph obtained by mapping $H$ on $[n]$ via $f_i$. Let $H^{\ast}$ be the graph with vertex set $[n]$ and edge set $\bigcup_{i\in [k-1]}E(H_i)$. Note that $\Delta(H^{\ast})\leq (k-1)(h-1)$. For any vertex $u$, there is a vertex $v$ that is at distance at least three from $u$ in $H^{\ast}$ since $n>\Delta(H^{\ast})^2+1$. If there is an edge $e$ incident with $u$ or $v$ such that $e\in E(H_i)\cap E(H_j)$ for some $1\leq i\neq j\leq k-1$, then after switching $u$ and $v$ in $f_i$, we claim that there is no edge $e'$ incident with $u$ or $v$ satisfying $e'\in E(H_i)\cap E(H_{\ell})$ for any $\ell\in [k-1]\setminus \{i\}$. Otherwise, suppose that there is an edge $vw\in E(H_i)\cap E(H_{\ell})$ after switching $u$ and $v$ in $f_i$. This implies that before switching $u$ and $v$ in $f_i$, we have $vw\in E(H_{\ell})$ and $uw\in E(H_i)$. Thus $uwv$ is a path of length two in $H^{\ast}$, contradicting the fact that $v$ is at distance at least three from $u$. Thus we can repeat this process to obtain a graph with no edge $e$ such that $e\in E(H_i)\cap E(H_j)$ for some $1\leq i\neq j\leq k-1$. Hence, we can color $K_{n}$ with $c(e)=i$ if $e\in E(H_i)$ for each $i\in [k-1]$ and $c(e)=k$ otherwise. Thus nim$_k(n, K_{1,h})\geq \sum_{i\in [k-1]}|E(H_i)|=(k-1)ex(n, K_{1,h})$.

Moreover, let $G$ be a $k$-edge-coloring of $K_{n}$ with nim$_k(n, K_{1,h})$ edges not contained in any monochromatic copy of $K_{1,h}$. For $i\in [k]$, let $G_i$ (resp., $G^{nim}_i$) denote the spanning subgraph of $G$ with edge set $E(G_i)=\{e\in E(G)\colon\, c(e)=i\}$ (resp., $E(G^{nim}_i)=\{e\in E(G)\colon\, e$ is not contained in any monochromatic copy of $K_{1,h}, c(e)=i\}$) and let $V_i=\{v\in V(G)\colon\, d_{G_i}(v)\geq h\}$. If $n>k(h-1)+1$, then $\bigcup_{i\in [k]}V_i = V(G)$, and every vertex of $V_i$ is an isolated vertex in $G^{nim}_i$ for every $i\in [k]$. Since $ex(n, K_{1,h})=\left\lfloor(h-1)n/2\right\rfloor$, we have nim$_k(n, K_{1,h})=\sum_{i\in [k]}e(G^{nim}_i)\leq\sum_{i\in [k]} ex\left(n-|V_i|, K_{1,h}\right)\leq ex\left(\sum_{i\in [k]}(n-|V_i|), K_{1,h}\right)\leq ex((k-1)n, K_{1,h})$. Note that $ex((k-1)n, K_{1,h})=(k-1)ex(n, K_{1,h})+\eta$, where $\eta=\left\lfloor (k-1)/2\right\rfloor$ if $h$ is even and $n$ is odd, and $\eta=0$ otherwise. Therefore, for $n\geq k^2h^2$, if $h$ is even and $n$ is odd, then $(k-1)ex(n, K_{1,h})\leq$ nim$_k(n$ $K_{1,h}) \leq (k-1)ex(n, K_{1,h})+\left\lfloor (k-1)/2\right\rfloor$, and otherwise, we have nim$_k(n, K_{1,h})= (k-1)ex(n, K_{1,h})$. In particular, we have the following result in the case $k=2$, which partly answers a problem of Keevash and Sudakov \cite{KeSu} in the special case when $H$ is a star.

\noindent\begin{proposition}\label{prop:nimstar} For $n$ sufficiently large, we have {\rm nim}$_2(n, K_{1,h})= ex(n, K_{1,h})$.
\end{proposition}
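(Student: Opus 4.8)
The plan is to prove the two matching inequalities $\mathrm{nim}_2(n,K_{1,h})\ge ex(n,K_{1,h})$ and $\mathrm{nim}_2(n,K_{1,h})\le ex(n,K_{1,h})$ for all sufficiently large $n$; both are the $k=2$ instances of the argument developed above, but for a single star and two colors the estimates become exact, so I would record the short self-contained version.

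For the lower bound, first I would fix an extremal $K_{1,h}$-free graph $H$ on the vertex set $[n]$, so that $H$ has $ex(n,K_{1,h})=\lfloor (h-1)n/2\rfloor$ edges and maximum degree $\Delta(H)\le h-1$. Color every edge of $H$ with color $1$ and every remaining edge of $K_n$ with color $2$. A monochromatic copy of $K_{1,h}$ in this coloring would be a star with $h$ edges of a single color; a color-$1$ edge can only lie in a color-$1$ star, and no such star exists because $\Delta(H)\le h-1$. Hence all $ex(n,K_{1,h})$ edges of color $1$ avoid every monochromatic $K_{1,h}$, giving $\mathrm{nim}_2(n,K_{1,h})\ge ex(n,K_{1,h})$.

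For the upper bound, let $G$ be a $2$-edge-coloring of $K_n$ attaining $\mathrm{nim}_2(n,K_{1,h})$, and for $i\in\{1,2\}$ let $G_i$ be the color-$i$ subgraph and $V_i=\{v\colon d_{G_i}(v)\ge h\}$. The key observation is that if $v\in V_i$ then $v$ is the center of a color-$i$ copy of $K_{1,h}$ that can be chosen to contain any prescribed color-$i$ edge at $v$; thus every color-$i$ edge incident to $v$ lies in a monochromatic $K_{1,h}$, so $v$ is isolated in the subgraph $G^{nim}_i$ of surviving color-$i$ edges. Moreover $G^{nim}_i$ is itself $K_{1,h}$-free, since any $K_{1,h}$ inside it would be a monochromatic star containing its own edges. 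Consequently $e(G^{nim}_i)\le ex(n-|V_i|,K_{1,h})$. Taking $n\ge 2h$ forces the $K_n$-degree $n-1>2(h-1)$ at every $v$, so a vertex cannot have color-$1$ and color-$2$ degree both at most $h-1$; hence $V_1\cup V_2=V(G)$ and $|V_1|+|V_2|\ge n$.

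Finally I would combine these estimates. Writing $a=n-|V_1|$ and $b=n-|V_2|$, we have $a+b\le n$, and the Turán number of the star satisfies $ex(a,K_{1,h})+ex(b,K_{1,h})=\lfloor (h-1)a/2\rfloor+\lfloor (h-1)b/2\rfloor\le\lfloor (h-1)(a+b)/2\rfloor=ex(a+b,K_{1,h})\le ex(n,K_{1,h})$, using $\lfloor x\rfloor+\lfloor y\rfloor\le\lfloor x+y\rfloor$ and the monotonicity of $ex(\cdot,K_{1,h})$. Therefore $\mathrm{nim}_2(n,K_{1,h})=e(G^{nim}_1)+e(G^{nim}_2)\le ex(n,K_{1,h})$, which together with the lower bound yields equality. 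I do not expect a serious obstacle here: the only points requiring care are verifying $V_1\cup V_2=V(G)$ (which dictates how large $n$ must be) and the subadditivity of the floor function, which is exactly what makes the two-color, single-star case sharp — in contrast to the general estimate above, where the leftover term $\lfloor (k-1)/2\rfloor$ need not vanish.
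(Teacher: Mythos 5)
Your proof is correct and takes essentially the same route as the paper: the lower bound via an extremal $K_{1,h}$-free graph colored $1$ with all remaining edges colored $2$, and the upper bound via the sets $V_i$ of vertices of color-$i$ degree at least $h$, their isolation in $G^{nim}_i$, the covering $V_1\cup V_2=V(G)$ for $n>2(h-1)+1$, and subadditivity of $ex(\cdot,K_{1,h})$. The only (cosmetic) difference is that the paper specializes its general-$k$ estimate $\sum_i e(G^{nim}_i)\leq ex((k-1)n,K_{1,h})=(k-1)ex(n,K_{1,h})+\eta$, where $\eta=\left\lfloor (k-1)/2\right\rfloor$ vanishes automatically at $k=2$, whereas you verify the floor-function subadditivity directly.
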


In Section \ref{sec:ch-proof-multiplicity}, we studied the minimum number of copies of $H$ over all Gallai-$k$-colorings of $K_{n}$. Given an arbitrary $k$-edge-coloring $G$ of $K_n$, let $r_k(K_3, n)$ and $m_k(H,n)$ be the number of rainbow triangles and monochromatic copies of $H$ in $G$, respectively. It is interesting to consider the behavior of $r_k(K_3,n)+m_k(H,n)$. Clearly if $k\leq 2$, then $r_k(K_3,n)+m_k(H,n)=m_k(H,n)$, and if $G$ is rainbow, then $r_k(K_3,n)+m_k(H,n)=\binom{n}{3}$. However, the general behavior of $r_k(K_3,n)+m_k(H,n)$ seems difficult to determine.

Finally, we pose two conjectures. Note that we have shown that Conjecture \ref{conj:multiplicity1} below holds for the following cases: (1) $k=3$ and $n$ sufficiently large, (2) $k\geq 3$ and $n=GR_k(K_3)$, (3) $k$ is odd and $GR_k(K_3)\leq n\leq GR_k(K_3)+5^{(k-1)/2}-1$.

\noindent\begin{conjecture}\label{conj:multiplicity1} For $n\geq GR_k(K_3)$, we write $n=5^{\lfloor(k-1)/2\rfloor}m+r$, where $m$ and $r$ are nonnegative integers with $0\leq r\leq 5^{\lfloor(k-1)/2\rfloor}-1$. Then
$$g_{k}(K_3,n)= \left\{
                \begin{aligned}
                & r\binom{m+1}{3}+\left(5^{(k-1)/2}-r\right)\binom{m}{3}, & & \mbox{if $k$ is odd},\\
                & rM_2(K_3, m+1)+\left(5^{(k-2)/2}-r\right)M_2(K_3, m), & & \mbox{if $k$ is even}.
              \end{aligned}
           \right.$$
\end{conjecture}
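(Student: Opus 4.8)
\textbf{Proof proposal for Theorem~\ref{th:GRk4+egeneral}.}

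The plan is to prove the two inequalities $GR_k(s\cdot K_4+e, (k-s)\cdot K_3)>g(k,s)$ and $GR_k(s\cdot K_4+e, (k-s)\cdot K_3)\leq g(k,s)+1$ separately, with the upper bound handled by induction on $k+s$. For the lower bound I would build an explicit Gallai-$k$-colored complete graph on $g(k,s)$ vertices with no monochromatic $K_4+e$ in any of the first $s$ colors and no monochromatic $K_3$ in any of the last $k-s$ colors. The natural construction is iterative blow-ups: use the extremal $2$-coloring of $K_{17}$ avoiding a monochromatic $K_4$ (recall $R_2(K_4)=18$) to absorb two ``$K_4+e$'' colors at a time, and the extremal $2$-coloring of $K_5$ avoiding a monochromatic $K_3$ (since $R_2(K_3)=6$) to absorb two ``$K_3$'' colors at a time; when a single color remains at the end, take two copies of the current graph joined entirely in that color (this doubles the vertex count and introduces no monochromatic triangle across the join in a $K_3$-color, nor a $K_4+e$ in a $K_4+e$-color, since the join is bipartite). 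Checking that $g(k,s)$ matches the product $\prod 17$'s and $5$'s dictated by the parities of $s$ and $k-s$, together with the extra factor from the final doubling, is a routine bookkeeping of the four parity cases.

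For the upper bound, the base cases are $k=1$ (trivial), $k=2$ (Theorem~\ref{th:ramsey}, giving $R(K_4+e,K_4+e)=18$, $R(K_4+e,K_3)=9$, $R(K_3,K_3)=6$), and $s=0$ (Theorem~\ref{th:GRk3}). For the inductive step, I would take a Gallai partition $V_1,\dots,V_t$ with $t$ minimum, form the reduced $2$-coloring $R$ of $K_t$ in the two colors red and blue used between parts, and argue $t\geq 4$ (since $t=3$ collapses to $t=2$ by the minimality, and the $t=2$ case is dispatched by a direct induction split of $n=|V_1|+|V_2|$). The heart of the argument is a case analysis on whether each color is among the first $s$ (a ``$K_4+e$-color'') or the last $k-s$ (a ``$K_3$-color''). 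Introducing the sets $\mathcal{R},\mathcal{B}$ of parts containing a red, respectively blue, internal edge, and the counts $x_0,x_1,x_2$, I would establish structural facts: a part in $\mathcal{R}$ cannot lie in a red triangle of $R$, two parts in $\mathcal{R}$ must be joined in blue, red-degrees of $\mathcal{R}$-parts are bounded by $3$ (else a blue $K_4+e$ appears), all red/blue degrees are at most $8$ (from $R(K_3,K_4+e)=9$), no part has an internal monochromatic triangle in red or blue, and $x_2\leq 1$. Each such fact follows from the impossibility of pushing a monochromatic $K_4+e$ or $K_3$, combined with the minimality of $t$.

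With these facts in hand, the two main cases are: (Case~1) red is a $K_4+e$-color and blue a $K_3$-color, forcing $4\le t\le 8$ via $R(K_4+e,K_3)=9$; and (Case~2) both red and blue are $K_4+e$-colors, forcing $4\le t\le 17$ via $R(K_4+e,K_4+e)=18$. In each case I would bound $|V_i|$ by the appropriate induction value according to whether $i$ lies in $\mathcal{R}\cap\mathcal{B}$, in $\mathcal{R}\triangle\mathcal{B}$, or in neither, express these as fractions of $g(k,s)$ (such as $\tfrac{5}{17}$, $\tfrac{5}{34}$, $\tfrac{1}{17}$ in Case~2), and reduce to a single weighted inequality of the form $2t+8x_2+3x_1\leq 34$. \emph{The main obstacle I anticipate is precisely this final counting step in Case~2}: the coarse bounds $x_1=|\mathcal{R}|+|\mathcal{B}|\le 4$ and $x_2=0$ suffice except at the borderline $t=12$, where one must rule out the extremal configuration by a finer local argument—placing a red-active part and a blue-active part, using $R(K_3,K_3)=6$ on the seven remaining parts to force a monochromatic triangle in the reduced graph, and thereby a monochromatic $K_4+e$ in $G$. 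Keeping the four parity cases straight while verifying the arithmetic ``$\le g(k,s)$'' at every branch is the most error-prone part, but each individual check is mechanical once the structural facts are proved.
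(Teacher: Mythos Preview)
Your proposal is explicitly labeled and written as a proof of Theorem~\ref{th:GRk4+egeneral} (the Gallai--Ramsey number for $K_4+e$), but the statement you were asked to address is Conjecture~\ref{conj:multiplicity1}, which concerns the Gallai--Ramsey multiplicity $g_k(K_3,n)$. The paper does \emph{not} prove Conjecture~\ref{conj:multiplicity1}; it is posed as an open problem, with only the special cases $k=3$ (for $n$ large), $n=GR_k(K_3)$, and odd $k$ with $GR_k(K_3)\le n\le GR_k(K_3)+5^{(k-1)/2}-1$ verified via Theorems~\ref{th:multiplicity1}, \ref{th:multiplicity2} and Corollary~\ref{co:multiplicity1}. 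So as a response to the assigned statement, the proposal is simply off target.

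Taken instead as an outline for Theorem~\ref{th:GRk4+egeneral}, your plan coincides with the paper's proof essentially step for step: induction on $k+s$ with the same base cases, a minimum-$t$ Gallai partition, the claim $t\ge 4$, the sets $\mathcal{R},\mathcal{B}$ and counts $x_0,x_1,x_2$, the same list of structural facts, the two-case split according to whether red and blue lie among the first $s$ or last $k-s$ colors, the reduction to the inequality $2t+8x_2+3x_1\le 34$ in Case~2, and even the identification of $t=12$ as the residual subcase requiring a local $R(K_3,K_3)=6$ argument. One small slip in your lower-bound construction: your ``doubling'' step is correct for a leftover $K_3$-color (factor $2$), but for a leftover $K_4+e$-color doubling yields only a factor of $2$, whereas $g(k,s)$ carries a factor of $4$ when $s$ is odd. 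The paper handles this by seeding the iteration with a monochromatic $K_4$ (which is $K_4+e$-free) when $s$ is odd, rather than doubling at the end.
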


\noindent\begin{conjecture}\label{conj:edge} For integers $k\geq 2$, we have $f_{k}(n, K_3)=t(n, GR_{k-1}(K_3)- 1)$.
\end{conjecture}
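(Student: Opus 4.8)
The lower bound $f_k(n,K_3)\ge t(n,GR_{k-1}(K_3)-1)$ is already supplied by Lemmas~\ref{le:gr*bound}(2) and~\ref{le:gr*}, so the entire content of the conjecture is the matching upper bound $f_k(n,K_3)\le t(n,GR_{k-1}(K_3)-1)$; equivalently, one must delete the additive error $\delta n^2$ from Theorem~\ref{th:RMk3k3}. Write $N:=GR_{k-1}(K_3)$, so the target is $f_k(n,K_3)\le ex(n,K_N)=t(n,N-1)$. For $k\le 3$ the statement is already known (from $\mathrm{nim}_2(n,K_3)=t(n,2)$ and $\mathrm{nim}_3(n,K_3)=t(n,5)$ together with $f_k\le \mathrm{nim}_k$), so one may assume $k\ge 4$. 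It is worth stressing that for $k\ge 4$ the crude bound $f_k\le \mathrm{nim}_k$ is hopeless: $\mathrm{nim}_k(n,K_3)$ is governed by $R_k(K_3)$, which grows far faster than $GR_{k-1}(K_3)$, so any proof must exploit the rainbow-triangle restriction in an essential way.

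The plan is to upgrade the regularity argument behind Theorem~\ref{th:RMk3k3} to an exact result via stability. Let $F'$ denote the graph of \emph{good} edges, those lying in no rainbow and no monochromatic triangle. The first step is a stability statement: if $e(F')\ge t(n,N-1)-o(n^2)$, then $F'$ is $o(n^2)$-close in edit distance to the balanced complete $(N-1)$-partite graph $T_{N-1}(n)$, and moreover the colouring of the cross edges is, up to $o(n^2)$ exceptions, the blow-up of an extremal Gallai-$(k-1)$-colouring of $K_{N-1}$ with no monochromatic triangle, exactly as in the construction of Lemma~\ref{le:gr*bound}(2). The engine is the identity $GR^{\ast}_k(K_3)=GR_{k-1}(K_3)=N$ from Lemma~\ref{le:gr*}: in the multicoloured reduced graph the forbidden configuration of Definition~\ref{de:gr*} caps the relevant clique at size $N-1$, and a stability version of Lemma~\ref{le:gr*} (an approximate structure theorem for near-extremal colourings of the variant $GR^{\ast}_k$), combined with the stability form of Tur\'an's theorem, should force the reduced graph to be close to the extremal blow-up.

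The second step converts near-extremality into the exact inequality. Assuming for contradiction that $e(F')\ge t(n,N-1)+1$, I would fix the near-partition $V_1,\dots,V_{N-1}$ from the stability step and run a vertex-local cleaning argument: for each vertex $v$ one shows, using the triangle constraints against the already structured bulk of the graph, that $v$ can be assigned to a part so that its good-degree does not exceed its degree in $T_{N-1}(n)$, since any good edge inside a part would, together with the cross edges, either create a forbidden triangle or force a good configuration of size $N$ that the $GR^{\ast}_k$ bound rules out. Summing the per-vertex bounds would then give $e(F')\le t(n,N-1)$, the desired contradiction.

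The hard part is precisely this passage from approximate to exact. Unlike the classical Tur\'an setting, $F'$ is \emph{not} $K_N$-free and need not be $(N-1)$-colourable: a good clique $K_N$ can occur, because its $N$ vertices merely need to carry a Gallai-$k$-colouring with no monochromatic triangle, and such colourings of $K_N$ exist since $N=GR_{k-1}(K_3)\le GR_k(K_3)-1$. Thus the conjecture asserts global sparsity, $e(F')\le t(n,N-1)$, in the presence of possible local density, and no direct clique-Tur\'an bound can prove it. Two further difficulties feed into the obstacle: the extremal Gallai-$(k-1)$-colourings of $K_{N-1}$ are not unique (compare the non-uniqueness noted for the multiplicity problem), so the stability characterization must accommodate a whole family of near-extremal blow-ups together with the vertex-incident-colour condition~(2$^{\ast\ast}$); and the regularity method inherently discards $\Theta(n^2)$ edges and cannot resolve the within-part bad edges at the level of individual vertices, which is exactly where the finishing argument must operate. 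A clean regularity-free argument would be preferable, but it appears to require a genuinely new idea tying together rainbow-freeness and the monochromatic-triangle condition.
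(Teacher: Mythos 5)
You have not proved the statement, and the paper does not prove it either: this is precisely Conjecture~\ref{conj:edge}, which the authors pose as open after establishing only Theorem~\ref{th:RMk3k3}, i.e.\ the exact lower bound $f_k(n,K_3)\geq t(n,GR_{k-1}(K_3)-1)$ (via Lemmas~\ref{le:gr*bound}(2) and~\ref{le:gr*}) and an upper bound off by an additive $\delta n^2$. Your reductions are sound and match the paper's: the lower bound, the settled cases $k\in\{2,3\}$ via $\mathrm{nim}_2(n,K_3)=t(n,2)$ and $\mathrm{nim}_3(n,K_3)=t(n,5)$, and the correct observation that $f_k\leq \mathrm{nim}_k$ is useless for $k\geq 4$. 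But what remains is a research programme with a self-acknowledged hole, not a proof, so it cannot be accepted as one.

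The gap is concrete and sits in your second step. You assert that a good edge inside a part would ``force a good configuration of size $N$ that the $GR^{\ast}_k$ bound rules out,'' but Definition~\ref{de:gr*} rules out no configuration on $N$ actual vertices of $K_n$: as you yourself note, $F'$ can contain entire good cliques of size $N=GR_{k-1}(K_3)$ and beyond, since $N\leq GR_k(K_3)-1$ and such vertex sets merely carry a Gallai-$k$-coloring with no monochromatic triangle. In the paper's proof of Theorem~\ref{th:RMk3k3}, the $GR^{\ast}_k$ obstruction is applied to the \emph{reduced} graph, where the vertex color $c_i$ of cluster $V_i$ certifies an $(\varepsilon_1,d/2)$-regular monochromatic pair $(V_{i,1},V_{i,2})$ inside $V_i$, and condition (2$^{\ast\ast}$) is punished by embedding a monochromatic triangle across three regular pairs via Lemmas~\ref{le:slicing} and~\ref{le:embedding}. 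A single vertex $v$ admits no analogue of $(V_{i,1},V_{i,2})$, so condition (2$^{\ast\ast}$) gives no leverage at vertex scale, and your per-vertex degree bound does not follow from anything you have stated; the summation argument therefore never gets off the ground. Your first step has the same status: a ``stability version of Lemma~\ref{le:gr*}'' is plausible but unproven, and the non-uniqueness of extremal colorings (including extremal $GR^{\ast}$-colorings with more than one singleton color, cf.\ Figure~\ref{fig:figure1}) means even its correct formulation is nontrivial. In short, you have correctly located the obstruction — passing from the regularity method's $\Theta(\delta n^2)$ resolution to an exact bound in a setting where $F'$ is not $K_N$-free — but you have not supplied the new idea needed to overcome it, which is exactly why the paper leaves the statement as a conjecture.
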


\noindent{\bf Note.} We recently discovered that Theorem \ref{th:GRk4+e} has been proved by Su and Liu \cite{SuLi} and Zhao and Wei \cite{ZhWe} independently.

\section*{Acknowledgement}

The authors are grateful to the anonymous referee for valuable comments, suggestions and corrections which improved the presentation of this paper.

\end{document}